\numberwithin{equation}{section}
\newtheorem{theorem}{Theorem}[section]
\newtheorem{definition}[theorem]{Definition}
\newtheorem{lemma}[theorem]{Lemma}
\newtheorem{proposition}[theorem]{Proposition}
\newcommand{\norm}[1]{\left\Vert#1\right\Vert}
\newcommand{\abs}[1]{\left\vert#1\right\vert}
\newcommand{\R}{\mathbb{R}}
\DeclareMathOperator{\ran}   {\mathrm{ran}}
\DeclareMathOperator{\codim} {\mathrm{codim}}
\newcommand{\fint}{\,-\mspace{-19.4mu}\int} %%%% for integral in torus with horizontal line
\title{\textbf{Existence of Vortex Patch Equilibria  for Active Scalars Equations}}
\begin{document}
 % \pagenumbering{gobble}
\author{}  % in blank when u dont want to put author name
\maketitle

% \pagenumbering{arabic}

\centerline{
         {\large Edison Cuba}\footnote{Mathematical and Computer Sciences   and Engineering Division,    King Abdullah University of Science and Technology,  Thuwal 23955-6900, Kingdom of Saudi Arabia; }${}^,$\footnote{Department of Mathematics, State University of Campinas, Rua S\'{e}rgio Buarque de Holanda, 651, Cidade Universit\'{a}ria, 13083-859, Campinas, S\~{a}o Paulo, Brazil; 
{\it ecubah@ime.unicamp.br} }
     }
%\begin{big}
\medskip \medskip
%\end{big}

\textbf{Abstract.}
In this paper, we investigate the existence of a finite number of vortex patches for the generalized surface quasi-geostrophic (gSQG) equations with \(\alpha \in [1,2)\), focusing on configurations that may rotate uniformly, translate, or remain stationary. Using a desingularization technique, we reformulate the problem to resolve singularities arising in the point vortex limit. Assuming a nondegenerate equilibrium of the point vortices, we apply the implicit function theorem to construct time-periodic solutions to the gSQG equations, offering asymptotic descriptions of the vortex patch boundaries. 
% Our results extend recent work by Hassainia and Wheeler \cite{multipole} to \(\alpha \in [1,2)\), addressing more singular velocity fields and filling an open gap in the \(\alpha\) range.

\vskip 5mm

%    -------------------
%    +    KEY WORDS    +
%    -------------------
\textbf{Key words:} gSQG equations,  vortex patches,  desingularization.
\vskip 5mm
\textbf{Math. Class. No.:}  35Q35, 76B03, 76B47.
%%%%%%%%%%%%%%%%%%%%%%%%%%%%%%%%%%%%%
\vspace{3mm}

% \tableofcontents

\section{Introduction}\label{section1}

This study explores the dynamics of $N$-vortex patches for the generalized surface quasi-geostrophic (gSQG) equations. The gSQG equations represent a partial differential equation defined on the plane, frequently used to model atmospheric flows in areas outside the tropics. It is formulated as follows
\begin{align}\label{1-1}
	\begin{cases}
		\partial_t\theta+v\cdot \nabla \theta =0,&\text{in}\ \mathbb{R}^2\times (0,T)\\
	    v=\nabla^\perp\psi,&\\
    \psi=-(-\Delta)^{-1+\frac{\alpha}{2}}\theta,&\\
		\theta\big|_{t=0}=\theta_0, &\text{in}\ \mathbb{R}^2,
	\end{cases}
\end{align}
where \( 0 \leq \alpha < 2 \) and \(\perp\) denotes a counterclockwise rotation by an angle of \(\pi/2\) in the plane. In this system, \(\theta: \mathbb{R}^2\times(0,T)  \to \mathbb{R}\) represents the potential temperature of the fluid, referred to throughout this paper as the active scalar, and \(v: \mathbb{R}^2\times(0,T)  \to \mathbb{R}^2\) is the velocity field of the fluid. First, we define the nonlocal operator for \( 0 \leq \alpha < 2 \) as follows
\begin{equation*}
	(-\Delta)^{-1+\frac{\alpha}{2}}\theta(\boldsymbol  x)=\int_{\mathbb{R}^2}K_\alpha(\boldsymbol x- \boldsymbol y)\theta(\boldsymbol y)d\boldsymbol y,
\end{equation*}
where \( K_\alpha \) is the fundamental solution corresponding to the Green's function of the operator \( (-\Delta)^{-1 + \frac{\alpha}{2}} \) in \( \mathbb{R}^2 \), and is explicitly given by
\begin{equation}\label{eq:kernel}
K_\alpha(\boldsymbol  x )=\left\{
	\begin{array}{lll}
		\frac{1}{2\pi}\ln \frac{1}{|\boldsymbol x|}, \ \ \ \ \ \ \ \ \ \ \ \ \ \ \ \ \ \ \ \ \ & \text{if} \ \ \alpha=0,\\
		\frac{C_\alpha}{2\pi}\frac{1}{|\boldsymbol x |^\alpha}, \ \ \ C_\alpha=\frac{\Gamma(\alpha/2)}{2^{1-\alpha}\Gamma(\frac{2-\alpha}{2})}, & \text{if} \ \ \ 0<\alpha<2.
	\end{array}
	\right.
\end{equation}
here \(\Gamma\) denotes the Euler gamma function. The system simplifies to the 2D incompressible Euler equations when \(\alpha = 0\), and for \(\alpha = 1\), it represents the surface quasi-geostrophic (SQG) equations. The complete range \( \alpha \in [0,2) \) was introduced through a series of works. The scenario where \( \alpha \in [0,1] \) was investigated by C\'{o}rdoba et al. \cite{cordova}, while the more singular velocity fields corresponding to \( \alpha \in [1,2) \) were examined by Chae et al. \cite{chae}. This works specifically addresses the latter scenario, focusing on \( \alpha \in [1,2) \).

In recent years, there has been a growing interest in the mathematical study of active scalar equations. While the question of global existence remains largely unsolved, the case of the  Euler equations is an exception, where the global well-posedness of classical solutions in both the entire plane \(\mathbb{R}^2\) and any smooth bounded domain \(\Omega\) is well-established, see \cite{MP94}.
 Yudovich \cite{Yud} provided a proof of global well-posedness for initial data in \(L^1 \cap L^\infty\). However, extending this theory across the full range of \(\alpha \in (0,2)\) proves to be challenging due to the increasingly singular nature of the velocity, which falls below the Lipschitz class. Consequently, the question of global well-posedness for the entire interval \(\alpha \in (0,2)\) remains unresolved. The issue of local well-posedness within Sobolev spaces has been investigated in \cite{chae} for the whole plane and in \cite{ConN18a} for smooth bounded domains. It is also known that \(L^2\)-weak solutions to the gSQG equation exist globally in time, as demonstrated in \cite{LX19,Mar08,Resnick} for the full space and \cite{ConN18b,NHQ18} for smooth bounded domains. More specifically, considerable progress has been made regarding a particular class of vortices referred to as \textit{$\alpha$--patch solutions}. These solutions arise from initial data represented by the characteristic function of a bounded domain \(D\), specifically \(\theta_0(\mathbf{x}) = \chi_D(\mathbf{x})\), and their boundary motion \(D_t\) is described through the contour dynamics formulation. In fact, for the case \(0 < \alpha < 2\), the velocity field can be reconstructed using the Biot-Savart law and the Green-Stokes theorem as follows
\begin{equation*}
	v(\boldsymbol{x}, t) = 
	\frac{ C_\alpha}{2\pi} \int_{\partial D_t} \frac{1}{|\boldsymbol{x} - \boldsymbol{y}|^\alpha} d\boldsymbol{y}, \quad 0 < \alpha < 2.
\end{equation*}
Assuming the patch boundary \(\partial D_t\) is parameterized as \(z(\xi,t)\) with \(\xi \in [0, 2\pi)\), it follows that \(z(\xi,t)\) satisfies
\begin{equation*}
	\partial_t z(\xi,t) = 
	\frac{ C_\alpha}{2\pi} \int_0^{2\pi} \frac{\partial_\tau z( \tau,t)}{|z(\xi,t) - z(\tau, t)|^\alpha} d\tau, \quad 0 < \alpha < 1,
\end{equation*}
commonly referred to as the contour dynamics equation.  For the case \(1 \leq \alpha < 2\), the integral diverges. To address this singularity, we can adjust the velocity at the boundary by subtracting a tangential vector, leading to the definition
\begin{equation*}
	\partial_t z(\xi,t ) = \frac{ C_\alpha}{2\pi} \int_0^{2\pi} \frac{\partial_\tau z(\tau, t) - \partial_\xi z(\xi,t)}{|z(\xi,t) - z(\tau, t)|^\alpha} d\tau, \quad 1 \leq \alpha < 2.
\end{equation*}

 %%%%%%%%%%
 
 For the Euler equations, global persistence of boundary regularity has been demonstrated in \cite{B-C,C,Gancedo-1}. On the other hand, the situation for the gSQG equation with \(\alpha \in (0,2)\) is more complex. For this range of \(\alpha\), only local-in-time persistence of regularity in Sobolev spaces has been proven, as discussed in \cite{chae,Gan08,Rod05}. Furthermore, results concerning finite-time singularities involving multi-signed patches in the half-plane have been achieved across various ranges of \(\alpha\), as noted in \cite{Gancedo-1,KRYZ,KYZ17}. 
 
 % Recently, ill-posedness results in several Hölder and Sobolev spaces, specifically regarding boundary patches or initial data, have been reported in \cite{Cor-Zo21,Cor-Zo22,KL23a,KL23}.

The investigation of the existence of time-periodic solutions for active scalar equations \eqref{1-1}, commonly referred to as relative equilibria or V-states has a rich history and continues to be a dynamic area of research,  see, e.g. \cite{edison3,cao,Cas1,edison2,de1,Serr,Gom,HH2,multipole,HH15,Hmidi-Mateu,hmidi2}. Although these systems may appear to have straightforward flow configurations characterized by either rotating or traveling motions, they possess remarkable complexity and exhibit intricate dynamics. The earliest known example of rotating patches in the Euler equations was provided by Kirchhoff \cite{Kirch}, who demonstrated that an ellipse with semi-axes \(a\) and \(b\) rotates uniformly with an angular velocity of \(\Omega = \frac{ab}{a^2 + b^2}\). Almost a century later, Deem and Zabusky \cite{DZ78} performed numerical simulations that illustrated the existence of  V-states with \(m\)-fold symmetry. Burbea \cite{Bur} later provided an analytical confirmation of this finding using bifurcation theory, revealing that bifurcation from the Rankine vortices (in the radial case) occurs at angular velocities of \(\Omega = \frac{m-1}{2m}\) for \(m \geq 2\). Subsequently, Hmidi et al. \cite{HMV}  established the \(C^\infty\) boundary regularity  of the bifurcated V-states near the Rankine vortices for \eqref{1-1} with \(\alpha \in [0,1)\). Additionally, the analyticity of the boundary was explored further by Castro et al. in \cite{CCG16b} across the entire range of \(\alpha \in [0,2)\). The study of V-states for the gSQG model in the entire plane was first conducted by Hassainia and Hmidi \cite{HH15}, who confirmed results analogous to those of Burbea for all \(\alpha \in (0,1)\). Later, Castro et al. \cite{Cas1} extended this framework to the range \(\alpha \in [1,2)\) and established the \(C^\infty\) boundary regularity, with further confirmation of the real analyticity of the V-states boundary presented in \cite{CCG16b}. The examination of V-states in radial domains with rigid boundaries was obtained by De la Hoz et al. for the 2D Euler equation in \cite{DHHM} and subsequently expanded by Hmidi et al. \cite{HXX23} for the gSQG equations with \(\alpha \in (0,1)\). Results concerning with the existence of solutions to doubly connected V-states  were obtained in \cite{de1,HMV}.

In addition to the previously mentioned results concerning vortex patches for active scalar equations, another notable category is that of asymmetric vortex patches, which holds considerable physical significance. These configurations have been numerically explored by Dritschel \cite{Dritschel}. Examining asymmetric patches presents challenges associated with the nonlocal velocity field and the contour dynamics equation related to \eqref{1-1}. In the context of the 2D Euler equations, Hassainia and Hmidi \cite{HH2} investigated the asymmetric scenario and established the existence of co-rotating and traveling asymmetric vortex pairs. For the gSQG equations with \(\alpha \in [0,1)\), the work presented in \cite{multipole} established the existence of a finite set of multipole vortex patches, assuming the point vortex equilibrium is non-degenerate. This study provided instances of asymmetric vortex patch pairs that rotate and travel, along with asymmetric stationary tripoles. The authors in \cite{Gomez-Jaemin-Jia2021} investigates stationary solutions that feature multiple multi-layered patches in the Euler equations. Results regarding the asymmetric configuration for the case   $\alpha\in[1,2)$ were presented in \cite{edison2}, where the authors established the existence of solutions for co-rotating and traveling asymmetric vortex pairs for \eqref{1-1}. Additional results concerning the existence of stationary  multiple vortex patches have been established in bounded domains, with symmetric vortex patches for the Euler equations (\(\alpha = 0\)) reported in \cite{cao}, while the existence of stationary multiple symmetric vortex patches for \(\alpha \in (1,2)\) were found in \cite{edison3}.

The goal of this paper is to obtain the existence of time-periodic vortex patches for the gSQG equations \eqref{1-1} in the interval \([1,2)\), focusing on arbitrary configurations involving a finite number of point vortices.
  We can simplify the problem for general configurations through the Lyapunov–Schmidt method, reducing it to a finite-dimensional nonlinear equation. By assuming a natural non-degeneracy condition on the point vortex configuration, one can instead directly apply a modified version of the implicit function theorem. Our results extend those of \cite{multipole} to more singular velocities, thus filling a notable gap in the range of \(\alpha\).  The existence  of asymmetric time-periodic solutions to the tripole patch solutions is novel for both the SQG and gSQG equations.  Additionally, our findings complement the work of \cite{cao2,Cas1} by incorporating vortex patch pairs with asymmetric configurations.  For comparison, refer to \cite{Gomez-Jaemin-Jia2021} for stationary solutions with multiple multi-layered patches in the Euler equations and \cite{Gom} for stationary doubly connected solutions to the gSQG equations with \(\alpha \in (0,2)\).

To present our main results, we first introduce some preliminaries  that will be used frequently throughout this paper. We consider the gSQG point vortex model, which describes \(N\) interacting vortices in the entire plane \(\mathbb{R}^2\). This model is governed by the following Hamiltonian system
\begin{align}\label{ode-sys0}
\frac{d}{dt}w_i(t)=\frac{\widehat{C}_\alpha }{2}\sum_{\substack{j=1, j\neq i}}^{N}\gamma_j   \frac{w_i(t)-w_j(t)}{|w_i(t)-w_j(t)|^{\alpha+2}}, 
\quad i=1,\ldots,N,
\end{align}
where \(w_1(t), \ldots, w_N(t)\) denote the positions of the vortices, and \(\pi\gamma_1, \ldots, \pi\gamma_N \in \mathbb{R} \setminus \{0\}\) represent their corresponding circulations. The constant \(\widehat{C}_\alpha\) is defined as
\begin{equation}\label{eqn:kalpha2}
\widehat{C}_\alpha := \alpha C_\alpha = \frac{2^\alpha \Gamma(1 + \alpha/2)}{\Gamma(1 - \alpha/2)}.
\end{equation}
In the case where \(\alpha = 0\), this system becomes the classical model of Eulerian point vortex interactions. For an extensive discussion on the \(N\)-vortex problem and vortex equilibria, refer to \cite{Aref}  and for the gSQG equations  \cite{Ros} .

We concentrate on periodic solutions where the vortex configuration behaves like a rigid body, governed by the equation
\begin{align}\label{eqn:ode}
\frac{d}{dt} w_i(t) = U + \Omega w_i(t),
\end{align}
where \(U \in \mathbb{R}\) denotes the constant linear velocity and \(\Omega \in \mathbb{R}\) represents the constant angular velocity. These solutions are referred to as relative equilibria. By solving \eqref{eqn:ode}, we can set \(U = 0\) by shifting coordinates when \(\Omega \neq 0\). Consequently, we can focus on three types of equilibria: rotating equilibria with \(\Omega \neq 0\) and \(U = 0\), traveling equilibria with \(U \neq 0\) and \(\Omega = 0\), or stationary equilibria where \(U = \Omega = 0\).

Defining \(w_i = (w_{i1}, w_{i2}) := w_i(0)\), the system \eqref{ode-sys0} can be rewritten as
\begin{align}\label{alg-sysP}
\mathcal{P}_i^\alpha(\lambda) &= \Omega w_i + U - \frac{\widehat{C}_\alpha}{2} \sum_{\substack{j=1,\, j \neq i}}^{N} \gamma_j \frac{w_i - w_j}{|w_i - w_j|^{\alpha + 2}} = 0, \quad i = 1, \dots, N,
\end{align}
where \(\lambda = (w_{11}, \dots, w_{N1}, w_{12}, \dots, w_{N2}, \gamma_1, \dots, \gamma_N, \Omega, U)\). This defines a mapping \(\mathcal{P}^\alpha(\lambda)\) into \(\mathbb{R}^{2N}\).

\begin{definition}\label{def:non-deg}
We refer to a rigidly co-rotating or traveling solution \(\lambda^*\) of \eqref{alg-sysP} as non-degenerate if, after an appropriate reordering of the components of \(\lambda\), it can be written as
\begin{align}
    \label{non-deg-codim1}
    \lambda = (\lambda_1, \lambda_2), \quad \text{where} \quad \lambda_1 \in \mathbb{R}^{2N-1}, \quad \text{and} \quad \codim \ran D_{\lambda_1} \mathcal{P}^\alpha(\lambda^*) = 1.
\end{align}
Similarly, a stationary solution \(\lambda^*\) of \eqref{alg-sysP} (with \(\Omega = U = 0\)) is called non-degenerate if, after reordering the components of \(\lambda\), it can be expressed as
\begin{align}
    \label{non-deg-codim3}
    \lambda = (\lambda_1, \lambda_2), \quad \text{where} \quad \lambda_1 \in \mathbb{R}^{2N-3}, \quad \text{and} \quad \codim \ran D_{\lambda_1} \mathcal{P}^\alpha(\lambda^*) = 3.
\end{align}
\end{definition}

In summary, our first result can be informally expressed as follows. For a more thorough and detailed explanation, we refer the reader to Theorems~\ref{existence} and \ref{existenceb}, where a precise formulation is provided.

\begin{theorem}\label{thm:general}
Assuming \(\alpha \in [1,2)\), every non-degenerate solution \(\lambda\) of \eqref{alg-sysP} can be transformed into a family of vortex patch equilibria through a desingularization process. The equilibria are parameterized by a small parameter \(\varepsilon > 0\), which controls the size of the vortex patches.
\end{theorem}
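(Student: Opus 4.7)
The plan is to recast the existence question as a bifurcation-type problem from the point vortex limit and apply a modified implicit function theorem. Concretely, I would look for $N$ patches $D_1^\varepsilon,\ldots,D_N^\varepsilon$ of small size $\varepsilon$, each nearly circular and centered close to the prescribed position $w_i$, with boundaries parametrized in polar form
\begin{equation*}
z_i(\xi)=c_i+\varepsilon R_i\bigl(1+g_i(\xi)\bigr)e^{i\xi},\qquad \xi\in[0,2\pi),
\end{equation*}
where $c_i\in\mathbb{R}^2$ is close to $w_i$, $R_i$ is a reference radius fixed by the circulation $\gamma_i$, and each $g_i$ lies in a suitable Hölder space $C^{1,\beta}(\mathbb{T})$ of mean-zero profiles. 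Substituting this ansatz into the relative equilibrium condition $v(z_i)=U+\Omega z_i^{\perp}$ along each boundary, using the regularized contour dynamics formulation with tangential subtraction that is necessary to handle the kernel $|x|^{-\alpha}$ for $\alpha\in[1,2)$, one obtains a nonlinear system $F(\varepsilon,g,\lambda)=0$, where $g=(g_1,\ldots,g_N)$ and $\lambda$ collects the centers, circulations, and motion parameters as in \eqref{alg-sysP}.

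The next step is to show that $F$ extends continuously to $\varepsilon=0$ and that the limiting equation decouples into two essentially independent pieces. On the one hand, each patch contributes at leading order a self-induced Fourier multiplier operator $L_i$ acting on $g_i$, whose symbol can be computed in closed form from the explicit kernel \eqref{eq:kernel} in terms of ratios of Gamma functions, in the spirit of Castro--C\'ordoba--G\'omez-Serrano \cite{Cas1} and Hassainia--Hmidi \cite{HH15}. By symmetry, the kernel of $L_i$ is spanned by the modes $\cos\xi,\sin\xi$, which correspond to rigid translations of the disk and are therefore absorbed into the center parameters $c_i$. On the other hand, the balance of the mutual interaction velocities among the patches at $\varepsilon=0$ reduces exactly to the point vortex equilibrium system $\mathcal{P}^{\alpha}(\lambda)=0$ from \eqref{alg-sysP}.

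I would then linearize $F$ at $\varepsilon=0$ and at the given non-degenerate equilibrium $\lambda^{*}$, with $\lambda_1$ the subset of parameters singled out in Definition \ref{def:non-deg}. On the shape variables the linearization reduces to the direct sum of the $L_i$, which is an isomorphism on the complement of $\mathrm{span}\{\cos\xi,\sin\xi\}$ in $C^{1,\beta}(\mathbb{T})$; on the parameter variables it coincides with $D_{\lambda_1}\mathcal{P}^{\alpha}(\lambda^{*})$, whose cokernel has dimension $1$ in the rotating or traveling case and dimension $3$ in the stationary case by the non-degeneracy assumption. These cokernel dimensions match exactly the residual gauge freedoms of the problem (overall phase for rotating or traveling equilibria, and overall translation plus rotation for stationary equilibria), so after fixing these gauges the reduced linearization is an isomorphism between appropriate Banach spaces. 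A standard application of the implicit function theorem then produces, for every sufficiently small $\varepsilon>0$, a unique solution $(g(\varepsilon),\lambda_1(\varepsilon))$ near $(0,\lambda_1^{*})$, hence a genuine vortex patch relative equilibrium of \eqref{1-1} with the asserted asymptotic shape.

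The main obstacle is the analytic control of the singular kernel $|x|^{-\alpha}$ with $\alpha\in[1,2)$ in the contour dynamics formulation. One has to establish that $F$ is well-defined and continuously Fréchet differentiable on the Hölder scale, which requires the regularized form with tangential subtraction together with commutator-type estimates adapted to the fractional kernel, and then compute the $\varepsilon\to 0$ asymptotic expansion of the self-induced velocity integrals with enough precision to isolate the non-trivial Fourier symbol of each $L_i$ and verify its invertibility away from the translation modes. Once these delicate estimates are in place, the non-degeneracy hypotheses \eqref{non-deg-codim1} or \eqref{non-deg-codim3} are precisely what is needed to close the argument via the implicit function theorem.
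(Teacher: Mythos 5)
Your overall architecture (contour dynamics, desingularized functional $\mathcal{F}^\alpha(\varepsilon,f,\lambda)$, decoupling at $\varepsilon=0$ into a diagonal Fourier multiplier on the shapes plus the point vortex system $\mathcal{P}^\alpha(\lambda)=0$, and the use of the non-degeneracy of $\lambda^*$) matches the paper. But there is a genuine gap at the decisive step: you propose to handle the codimension-$1$ (resp.\ $3$) cokernel by ``fixing the residual gauge freedoms'' and then applying the standard implicit function theorem. Gauge fixing can only remove kernel directions (non-uniqueness); it cannot create surjectivity. In the paper's setting the linearization $D_{(f,\lambda_1)}\mathcal{F}^\alpha(0,0,\lambda^*)$ already has trivial kernel, and the obstruction is precisely that its range misses the first Fourier modes $\{\sin x,\cos x\}$ in the target, so the standard IFT simply does not apply. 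What closes the argument in the paper is a structural fact about the nonlinearity itself: the integral identities of Lemmas \ref{lem:idens} and \ref{identities2}, derived from the antisymmetry of the kernel $\widehat K_\alpha$ and the relative stream function $\Psi_\varepsilon$, show that suitable $\gamma_i$-weighted averages of $\mathcal{F}^\alpha_i$ against the $n=1$ modes vanish identically in $(\varepsilon,f,\lambda)$ (these are conservation-of-impulse type identities, one scalar constraint in the traveling/rotating case and three in the stationary case). These identities furnish the auxiliary map $\Phi$ (or $\widetilde\Phi$) with $\Phi(\varepsilon,f,\mathcal{F}^\alpha(\varepsilon,f,\lambda),\lambda)=0$ and $\ran D_g\Phi=\mathbb{R}$ (resp.\ $\mathbb{R}^3$), which is exactly what the modified implicit function theorem (Lemma \ref{abstract-lemma}, taken from the multipole paper) requires to absorb the cokernel. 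Without proving such identities, your argument cannot conclude; with them, ``gauge fixing'' is unnecessary.

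A secondary, fixable issue is the functional setting. For $\alpha\in[1,2)$ the linearized self-interaction has symbol $\sigma_n=O(n^{\alpha-1})$ (and $O(\log n)$ at $\alpha=1$), so it is not an isomorphism of a single Hölder space onto itself: one must work with a pair of spaces encoding the $(\alpha-1)$-derivative loss, as the paper does with $X^{k+\alpha-1}\to Y_0^k$ and the logarithmic space $X^{k+\log}$ at $\alpha=1$. Your claim that $L_i$ is an isomorphism ``on the complement of $\mathrm{span}\{\cos\xi,\sin\xi\}$ in $C^{1,\beta}(\mathbb{T})$'' would have to be restated accordingly, and the $C^1$ regularity of $\mathcal{F}^\alpha$ (including the $\varepsilon|\varepsilon|^\alpha$ scaling of the perturbation needed to desingularize the more singular kernel) verified on that scale.
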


As a direct application of the above theorem, we present our second main result.
\begin{theorem}\label{thm:informal-pair} 

  %%%%%%%%%%%
  Let \(\alpha \in [1,2)\) and \(b_1, b_2, b_3 \in (0, \infty)\). Suppose \(\gamma, d, \mathtt{c}, \mathtt{a}\) are defined as previously. The following results are established:
  
\begin{enumerate}[label=\rm(\roman*)]
\item For any sufficiently small \(\varepsilon > 0\), we can define two strictly convex domains \(\mathcal{O}^\varepsilon_1\) and \(\mathcal{O}^\varepsilon_2\), both of which are at least of class \(C^1\). Additionally, there exist real numbers \(w_{11}(\varepsilon) = d + o(\varepsilon)\) and \(w_{12}(\varepsilon) = -\mathtt{c}d + o(\varepsilon)\), such that

\begin{equation*}
    \theta_{0}^\varepsilon = \frac{\mathtt{c}\gamma}{\varepsilon^2 b_1^2} \chi_{\mathcal{D}_1^\varepsilon} + \frac{\gamma}{\varepsilon^2 b_2^2} \chi_{\mathcal{D}_2^\varepsilon},
\end{equation*}
where
\begin{equation*}
    \mathcal{D}_1^\varepsilon := \varepsilon b_1 \mathcal{O}_1^\varepsilon + w_{11}(\varepsilon)\boldsymbol e_1, \quad \mathcal{D}_2^\varepsilon := \varepsilon b_2 \mathcal{O}_2^\varepsilon + w_{21}(\varepsilon)\boldsymbol e_1.
\end{equation*}
This configuration generates a co-rotating vortex pair for the equation \eqref{1-1} with an angular velocity given by 
\[\Omega^* = \frac{1}{2} \gamma \widehat{C}_\alpha d^{-\alpha-2} (1 + \mathtt{c})^{-\alpha-1} .\]

\item For any sufficiently small \(\varepsilon > 0\), we can define two strictly convex domains, \(\mathcal{O}^\varepsilon_1\) and \(\mathcal{O}^\varepsilon_2\), each belonging to at least class \(C^1\). Additionally, there are real numbers \(w_{12}(\varepsilon) = -d + o(\varepsilon)\) and \(\gamma_1(\varepsilon) = -\gamma + o(\varepsilon)\) such that
\begin{equation*}
    \theta_{0}^\varepsilon = \frac{\gamma(\varepsilon)}{\varepsilon^2 b_1^2} \chi_{\mathcal{D}_1^\varepsilon} + \frac{\gamma}{\varepsilon^2 b_2^2} \chi_{\mathcal{D}_2^\varepsilon},
\end{equation*}
where
\begin{equation*}
    \mathcal{D}_1^\varepsilon := \varepsilon b_1 \mathcal{O}_1^\varepsilon + d \boldsymbol e_1, \quad \mathcal{D}_2^\varepsilon := \varepsilon b_2 \mathcal{O}_2^\varepsilon + w_{21}(\varepsilon) \boldsymbol e_1.
\end{equation*}
This configuration yields a traveling vortex pair for the equation \eqref{1-1} with a speed defined by 
\[U^* = \gamma \widehat{C}_\alpha 2^{-\alpha-2} d^{-\alpha-1} .\]

\item For any sufficiently small \(\varepsilon > 0\), we can construct three strictly convex domains \(\mathcal{O}^\varepsilon_1\), \(\mathcal{O}^\varepsilon_2\), and \(\mathcal{O}^\varepsilon_3\), each belonging to at least class \(C^1\). Additionally, we define the real numbers \(w_{31}(\varepsilon) = -\mathtt{a} + o(\varepsilon)\) and 
\[
\gamma_2(\varepsilon) = -\gamma \left(\frac{\mathtt{a}}{\mathtt{a} + 1}\right)^{\alpha+1} + o(\varepsilon)
\]
such that
\begin{align*}
    \theta_{0}^\varepsilon &= \frac{\gamma}{\varepsilon^2 b_1^2} \chi_{\mathcal{D}_1^\varepsilon} + \frac{\gamma_2(\varepsilon)}{\varepsilon^2 b_2^2} \chi_{\mathcal{D}_2^\varepsilon} + \frac{\gamma \mathtt{a}^{\alpha+1}}{\varepsilon^2 b_3^2} \chi_{\mathcal{D}_3^\varepsilon}, \\
    \text{where} \quad \mathcal{D}_1^\varepsilon &= \varepsilon b_1 \mathcal{O}_1^\varepsilon + \boldsymbol e_1, \quad \mathcal{D}_2^\varepsilon = \varepsilon b_2 \mathcal{O}_2^\varepsilon, \quad \mathcal{D}_3^\varepsilon = \varepsilon b_3 \mathcal{O}_3^\varepsilon + w_{31}(\varepsilon)\boldsymbol e_1.
\end{align*}
This configuration generates a stationary vortex tripole for the equation \eqref{1-1}.

  \end{enumerate}
  \end{theorem}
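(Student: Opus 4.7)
The plan is, for each of the three configurations in the statement, first to identify the $N$-point-vortex relative equilibrium underlying it, second to check that this equilibrium is non-degenerate in the sense of Definition~\ref{def:non-deg}, and finally to apply Theorem~\ref{thm:general} (in the precise form given later as Theorems~\ref{existence} and~\ref{existenceb}) to produce the desingularized vortex-patch families.

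The first step is purely algebraic. In case (i) one takes $N=2$, positions $w_1=d\boldsymbol e_1$, $w_2=-\mathtt{c}d\boldsymbol e_1$, circulations $\gamma_1=\mathtt{c}\gamma$, $\gamma_2=\gamma$, and $U=0$; both components of \eqref{alg-sysP} collapse to the single identity $\Omega^*=\tfrac{1}{2}\widehat{C}_\alpha \gamma d^{-\alpha-2}(1+\mathtt{c})^{-\alpha-1}$. In case (ii) one takes $w_1=d\boldsymbol e_1$, $w_2=-d\boldsymbol e_1$, $\gamma_1=-\gamma$, $\gamma_2=\gamma$, $\Omega=0$, and the two balance equations collapse to $U^*=\widehat{C}_\alpha \gamma\, 2^{-\alpha-2} d^{-\alpha-1}$. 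In case (iii) one takes $N=3$ collinear vortices at $\boldsymbol e_1$, $0$, $-\mathtt{a}\boldsymbol e_1$ with $\gamma_1=\gamma$, $\gamma_3=\gamma \mathtt{a}^{\alpha+1}$, and $U=\Omega=0$; the equations at the two outer vortices force $\gamma_2=-\gamma(\mathtt{a}/(\mathtt{a}+1))^{\alpha+1}$, and the equation at the middle vortex is then automatic by the cancellation $-\gamma+\gamma\mathtt{a}^{\alpha+1}\cdot \mathtt{a}^{-\alpha-1}=0$.

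The second step is the real content of the argument. For each configuration I would split $\lambda=(\lambda_1,\lambda_2)$, freezing in $\lambda_2$ enough coordinates to quotient out the continuous symmetries of \eqref{alg-sysP} (translation in both directions; for (i) and (iii) also the rotation; for (i) also the scaling $(w_i,\Omega)\mapsto(\mu w_i,\mu^{-\alpha-2}\Omega)$), and then verify that $D_{\lambda_1}\mathcal{P}^\alpha$ has the codimension demanded by \eqref{non-deg-codim1} or \eqref{non-deg-codim3}. For (i) and (ii), a natural choice is to let $\lambda_2$ contain the two $y$-coordinates, one of the circulations, one $x$-coordinate, and the redundant velocity parameter, so that $\lambda_1\in\R^3$; after exploiting the reflection symmetry across the $x$-axis to block-diagonalize the resulting $4\times 3$ Jacobian, its rank can be read off from an explicit $2\times 2$ minor built from derivatives of the cross-kernel $(w_i-w_j)/|w_i-w_j|^{\alpha+2}$. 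For (iii), one would place the $y$-coordinates, two of the circulations and the position of $w_1$ into $\lambda_2$, so that $\lambda_1\in\R^3$ must hit a codimension-$3$ range in $\R^6$; the same reflection argument reduces non-degeneracy to an explicit rational determinant in $\alpha$ and $\mathtt{a}$, which I expect to be nonzero throughout $\alpha\in[1,2)$ and $\mathtt{a}>0$.

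Once non-degeneracy is established, Theorem~\ref{thm:general} applies and yields, for every sufficiently small $\varepsilon>0$, strictly convex $C^1$ domains $\mathcal{O}_i^\varepsilon$ together with corrected positions $w_{i1}(\varepsilon)$, and in (ii) a corrected circulation $\gamma_1(\varepsilon)$, all agreeing with their point-vortex values up to $o(\varepsilon)$; rewriting the output of that theorem in the notation of the statement gives each of items (i)--(iii). The main technical obstacle will be the non-degeneracy verification in case (iii): with target codimension $3$ rather than $1$, the kernel of $D_{\lambda_1}\mathcal{P}^\alpha$ is tight, and one must correctly match each continuous symmetry to a direction in $\ker D\mathcal{P}^\alpha$ so that the rank count is not accidentally inflated by hidden symmetry-invariant combinations.
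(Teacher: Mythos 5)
Your overall strategy coincides with the paper's (identify the point-vortex equilibrium, verify the non-degeneracy of Definition~\ref{def:non-deg}, then invoke Theorems~\ref{existence}, \ref{existenceb}, \ref{theorem-Phi-stationary}, \ref{theorem-Phi-stationaryb}), and your first, algebraic step reproduces exactly the equilibria \eqref{v-pair-rot}, \eqref{v-pair-trans}, \eqref{stationary-tripole} with the correct $\Omega^*$, $U^*$ and $\gamma_2^*$. However, your second step --- the actual content of the proof --- is set up with a choice of split $\lambda=(\lambda_1,\lambda_2)$ that cannot satisfy the non-degeneracy condition, so as written the argument fails before the implicit function theorem can be applied.

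Concretely, you propose to freeze \emph{both} (resp.\ all three) $y$-coordinates into $\lambda_2$, keeping in $\lambda_1$ only $x$-coordinates, a circulation and the active velocity parameter. All the equilibria here are collinear on the $x$-axis, and at such a configuration every partial derivative of $\mathcal{P}^\alpha$ with respect to an $x$-coordinate, a circulation, or $\Omega$ has vanishing $y$-components: the $y$-component of $(w_i-w_j)/|w_i-w_j|^{\alpha+2}$ is zero and stays zero to first order under horizontal perturbations, $\partial_{\gamma_j}\mathcal{P}^\alpha_i$ points along the $x$-axis, and $\partial_\Omega\mathcal{P}^\alpha_i=w_i$ is horizontal. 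Hence in case (i) your three columns all lie in the two-dimensional subspace of $\mathbb{R}^4$ with zero $y$-entries, so $\ker D_{\lambda_1}\mathcal{P}^\alpha(\lambda^*)\neq\{0\}$ and $\codim\ran\geq 2$, violating \eqref{non-deg-codim1}. In case (iii) the same horizontality confines the range to the $x$-components, and in addition the conservation identity $\sum_i\gamma_i\mathcal{P}^\alpha_i(\lambda)\equiv \Omega\sum_i\gamma_i w_i+U\sum_i\gamma_i=0$ (with $\Omega=U=0$ frozen) forces, after differentiation at $\lambda^*$, the further constraint $\sum_i\gamma_i^* v_i=0$ on every vector $v$ in the range; so the rank is at most $2$, the kernel is nontrivial, and $\codim\ran\geq 4$, violating \eqref{non-deg-codim3}. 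This is not a matter of ``quotienting symmetries'': precisely because the vertical directions cannot be reached by perturbing $f$ or the horizontal data, one $y$-coordinate must be left \emph{free} in $\lambda_1$ so that $D_{\lambda_1}\mathcal{P}^\alpha$ can cover them. That is what the paper does: it takes $\lambda_1=(w_{11},w_{21},w_{22})$ for the co-rotating pair, $\lambda_1=(w_{21},w_{22},\gamma_1)$ for the traveling pair, and $\lambda_1=(w_{31},w_{32},\gamma_2)$ for the tripole, and the columns coming from $w_{22}$ (resp.\ $w_{32}$) are exactly what produce the nonzero $y$-rows and the full rank $3$ (Jacobian determinants $(\alpha+2)(1+\mathtt{c})^2$ and $2\gamma d(\alpha+1)$ after discarding a redundant row, and rank $3$ for the tripole matrix). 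Your plan can be repaired by adopting such a split, but the verification you describe (a $2\times 2$ minor of the purely horizontal block, or a determinant in $\alpha$ and $\mathtt{a}$ with all $y$'s frozen) does not establish what Definition~\ref{def:non-deg} requires.
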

Note that we arrive at vortex point patches when $\varepsilon = 0$. Additionally, by setting some of $b_1$, $b_2$, or $b_3$ to zero, we can recover configurations that involve a combination of vortex patches and point vortices.

The organization of this paper is outlined as follows. In the following section, we derive the boundary equations that define vortex equilibria and introduce the  function spaces. Section \ref{section2} focuses on the regularity properties of the functionals associated with vortex equilibria. Sections \ref{section4} and \ref{section5} are devoted to the proof of our main theorems, which establish that the existence of vortex patch equilibria is a result of applying a modified implicit function theorem in appropriate Banach spaces. Additionally, we investigate the convexity of each individual patch through a standard perturbative method focused on curvature.

\section{Time-periodic vortex patch models}\label{section2}

In this section, we examine a configuration of a finite number of point vortices that exhibit co-rotating or traveling motion. Utilizing methods established in \cite{Cas1,edison2,multipole}, we begin by deriving the contour dynamics equations that govern the steady states of \(N\) vortex patches associated to the gSQG equations \eqref{1-1} with $ 1\leq \alpha<2$. Next, we specify the suitable function spaces required to ensure that the problem is well-posed. Finally, we establish Theorem \ref{existence} by applying an extension of the implicit function theorem.

\smallskip

\subsection{Co-rotating vortex patches}

In this section, we begin by examining \(N\) co-rotating patches, represented by simply connected domains \(\mathcal{O}_i^\varepsilon\) for \(i = 1, \ldots, N\), within the context of the gSQG equations \eqref{1-1},
 with $[1,2)$. To facilitate our discussion, we will establish the following conventions and notation. We define \(N\) simply connected domains, denoted as \(\mathcal{O}_i^\varepsilon\) for \(i = 1, \ldots, N\), which are close to the unit disk and are contained within a disk of radius 2, with both disks centered at the origin.
Let \( b_i \in (0, \infty) \), \( w_i \in \mathbb{R}^2 \), and \( \varepsilon \in (0, \varepsilon_0) \). We introduce the domains
\begin{equation}\label{Dj} 
\mathcal{D}_{i}^\varepsilon := \varepsilon b_i \mathcal{O}_i^\varepsilon + w_i, 
\end{equation}
where $\varepsilon_0 > 0$ is chosen to ensure that the sets $\mathcal{D}_{i}^\varepsilon$ are mutually disjoint, \begin{equation}\label{intersection} 
\overline{\mathcal{D}_{i}^\varepsilon} \cap \overline{\mathcal{D}_{j}^\varepsilon} = \emptyset, \quad i \neq j. 
\end{equation} 
The initial vorticity is given by
\begin{equation}\label{intial-vort} 
\theta_{0}^\varepsilon(x) = \frac{1}{\varepsilon^2} \sum_{i=1}^N \frac{\gamma_i}{b_i^2} \chi_{\mathcal{D}_i^\varepsilon}(x). \end{equation} 
If  $|\mathcal{O}_i^\varepsilon| \to |\mathbb{D}|$ and $\varepsilon \to 0$, then the vorticity in \eqref{intial-vort} converges to a point vortex distribution 
\begin{equation*} \theta_{0}^0(x) = \pi \sum_{i=1}^N \gamma_i \delta_{w_i}(x), 
\end{equation*}
with its evolution governed by \eqref{ode-sys0}.
First, assume that $\theta_{0}^\varepsilon$ corresponds to $N$ co-rotating patches in the model \eqref{1-1}, around the centroid, with an angular velocity $\Omega$, which rotates  counterclockwise.  Specifically, we seek a solution $\theta^\varepsilon(t)$ of \eqref{1-1} of the form
\[
\theta^\varepsilon(x,t) = \theta_{0}^\varepsilon\big(Q_{\Omega t}(x-w_i)+w_i\big).
\]
Substituting this expression into \eqref{1-1} yields
\[
\big(v^\varepsilon(x) + \Omega (x-w_i)^\perp\big) \cdot \nabla \theta_{0}^\varepsilon(x) = 0,\quad \text{for every} \quad x \in \partial \mathcal{D}_i^\varepsilon, \quad i = 1, \ldots, N, 
\]
where $v^\varepsilon$ denotes the velocity field associated with $\theta_{0}^\varepsilon$.  
The motion of the boundary $\partial\mathcal{D}_i^\varepsilon$ aligns with
\begin{equation*}
 \big(v^\varepsilon(x) + \Omega (x-w_i)^\perp\big) \cdot \nabla \mathbf{n}( x)=0, \quad \text{for every} \quad x \in \partial \mathcal{D}_i^\varepsilon, \quad i = 1, \ldots, N, 
\end{equation*}%
where the boundary $\partial\mathcal{D}_i^\varepsilon$ is parameterized by $z_i(x,t)$ and the vector $\mathbf{n}(x)$ stands for the normal vector on the boundary with $x\in \lbrack
0,2\pi )$.
 We consider vortex patches associated to \eqref{1-1} which are close to the unit disk \(\mathbb{D}\) with an amplitude on the order of \(\varepsilon b_i\). Thus, we can express the patch in the following parameterization
\begin{equation*}
z_{i}(x) = w_i + \varepsilon b_{i} R_{i}(x) \left( \cos(x), \sin(x) \right), \quad \text{for } i = 1, \ldots, N,
\end{equation*}
where
\begin{equation}\label{solutions}
R_{i}(x) = 1 + \varepsilon |\varepsilon|^{\alpha} b_{i}^{1+\alpha} f_{i}(x), \quad \text{for } i = 1, \ldots, N,
\end{equation}
with \(x \in [0, 2\pi)\). We will begin by deriving the system for \(R_{i}(x)\) and subsequently use this expression to reformulate our system in terms of \(f_{i}(x)\). It is important to note that the process of constructing \(N\) co-rotating patches for the gSQG equations witch $1\leq\alpha<2$ is equivalent to finding a zero of the following equations
\begin{equation}
\begin{split}
&\Omega \left[ \varepsilon b_{i}R_{i}(x)R_i^{\prime }(x)-w_i\cdot(R_{i}^{\prime
}(x)(\cos (x),\sin(x))-w_iR_{i}(x)(-\sin (x),\cos(x))\right]   \\
& 
{\resizebox{.98\hsize}{!}{$-\frac{\gamma_i
C_\alpha}{\varepsilon^{1+\alpha}b_i^{1+\alpha} }\displaystyle\fint
\frac{\left((R_i(x)R_i(y)+R_i'(x)R_i'(y))%
\sin(x-y)+(R_i(x)R_i'(y)-R_i'(x)R_i(y))\cos(x-y)\right)dy}{\left|
\left(R_i(x)-R_i(y)\right)^2+4R_i(x)R_i  (y)\sin^2\left(\frac{x-y}{2}\right)%
\right|^{\frac{\alpha}{2}}}$}} \\
&
{\resizebox{.98\hsize}{!}{$-\displaystyle\sum_{j=1, j\neq i}^N\frac{\gamma_j C_\alpha}{\varepsilon  b_j} 
\displaystyle\fint
\frac{\left((R_i(x)R_j(y)+R_i'(x)R_j'(y))%
\sin(x-y)+(R_i(x)R_j'(y)-R_i'(x)R_j(y))\cos(x-y)\right)dy}{|\varepsilon b_{i}R_{i}(x)(\cos(x),\sin(x))+w_i-\varepsilon b_{j}R_{j}(x)(\cos(x),\sin(x))-w_j|^\alpha}$}} \\
& \qquad =0,
\end{split}
\label{1-3}
\end{equation}%

\subsection{Traveling vortex patches}

The second class of solutions we aim to construct consists of $N$ traveling patches corresponding to the gSQG equations. Similar to the co-rotating case, we consider $N$ bounded, simply connected domains $\mathcal{O}_i^\varepsilon$, $i = 1, \ldots, N$, which are small perturbations of the unit disc. These domains are contained within a disc of radius 2 and centered at the origin. Given $b_i \in (0,\infty)$, $w_i \in \mathbb{R}^2$, and $\varepsilon \in (0,\varepsilon_0)$, we define the domains 
\begin{equation*}
\mathcal{D}_{i}^\varepsilon := \varepsilon b_i \mathcal{O}_i^\varepsilon + w_i, 
\end{equation*}
where $\varepsilon_0 > 0$ is chosen to ensure that the sets $\mathcal{D}_{i}^\varepsilon$ are mutually disjoint, 
\begin{equation*} 
\overline{\mathcal{D}_{i}^\varepsilon} \cap \overline{\mathcal{D}_{j}^\varepsilon} = \emptyset, \quad i \neq j. 
\end{equation*} 
In this context, for any given real numbers \(\gamma_1\) and \(\gamma_2\), the initial vorticity data is expressed as follows
\begin{equation}
\theta_{0}^\varepsilon(x) = \frac{1}{\varepsilon^2} \sum_{i=1}^N \frac{\gamma_i}{b_i^2} \chi_{\mathcal{D}_i^\varepsilon}(x),
\label{initial}
\end{equation}
which consists of \(N\) simply connected patches, each characterized by distinct vorticity magnitudes \(\gamma_i\). It is important to note that the initial vorticity data \(\theta_{0,\varepsilon}\) is formulated for traveling patches. Consequently, a traveling patch can be expressed as
\begin{equation*}
\theta_{\varepsilon}(x,t) = \theta^\varepsilon_{0}(x - tU),
\end{equation*}
where \(U\) represents a constant uniform speed. Based on equation \eqref{1-1}, we obtain
\begin{equation*}
(v^{\varepsilon}(x)-U(-1,1))\cdot \nabla \theta^\varepsilon_{0}(x)=0, \quad \text{for every} \quad x \in \partial \mathcal{D}_i^\varepsilon, \quad i = 1, \ldots, N.
\end{equation*}%
In other terms, the system outlined above can be expressed as
\begin{equation*}
(v^{\varepsilon}(x)-U(-1,1))\cdot\mathbf{n}( x)=0,\quad \text{for every} \quad x \in \partial \mathcal{D}_i^\varepsilon, \quad i = 1, \ldots, N, 
\end{equation*}%
Once more, our investigation centers on identifying \(N\) vortex patches related to the gSQG equations \eqref{1-1} within the interval \([1,2)\). To put it differently, our goal is to find a zero of the following equations
\begin{equation}\label{t3}
\begin{split}
&-U \cdot\left(R_{i}^{\prime
}(x)(\cos (x),\sin(x))+R_{i}(x)(-\sin (x),\cos(x))\right)   \\
& {\resizebox{.98\hsize}{!}{$-\frac{\gamma_i
C_\alpha}{\varepsilon^{1+\alpha}b_i^{1+\alpha} }\displaystyle \fint
\frac{\left((R_i(x)R_i(y)+R_i'(x)R_i'(y))%
\sin(x-y)+(R_i(x)R_i'(y)-R_i'(x)R_i(y))\cos(x-y)\right)dy}{\left|
\left(R_i(x)-R_i(y)\right)^2+4R_i(x)R_i  (y)\sin^2\left(\frac{x-y}{2}\right)%
\right|^{\frac{\alpha}{2}}}$}} \\
& {\resizebox{.98\hsize}{!}{$-\displaystyle\sum_{j=1, j\neq i}^N\frac{\gamma_j C_\alpha}{\varepsilon  b_j} 
\displaystyle \fint
\frac{\left((R_i(x)R_j(y)+R_i'(x)R_j'(y))%
\sin(x-y)+(R_i(x)R_j'(y)-R_i'(x)R_j(y))\cos(x-y)\right)dy}{|\varepsilon b_{i}R_{i}(x)(\cos(x),\sin(x))+w_i-\varepsilon b_{j}R_{j}(x)(\cos(x),\sin(x))-w_j|^\alpha}$}} \\
& \qquad =0,
\end{split}
\end{equation}%
For simplicity, we unify \eqref{1-3} and \eqref{t3} into a single condition
\begin{equation}
 \label{eq:funct}
\begin{split}
&U \left(R_{i}^{\prime
}(x)(\cos (x),\sin(x))+R_{i}(x)(-\sin (x),\cos(x))\right) \\
&-\Omega \left( \varepsilon b_{i}R_{i}(x)R_i^{\prime }(x)-w_i\cdot(R_{i}^{\prime
}(x)(\cos (x),\sin(x))-w_i\cdot R_{i}(x)(-\sin (x),\cos(x))\right)   \\
& {\resizebox{.98\hsize}{!}{$-\frac{\gamma_i
C_\alpha}{\varepsilon^{1+\alpha}b_i^{1+\alpha} }\displaystyle \fint
\frac{\left((R_i(x)R_i(y)+R_i'(x)R_i'(y))%
\sin(x-y)+(R_i(x)R_i'(y)-R_i'(x)R_i(y))\cos(x-y)\right)dy}{\left|
\left(R_i(x)-R_i(y)\right)^2+4R_i(x)R_i  (y)\sin^2\left(\frac{x-y}{2}\right)%
\right|^{\frac{\alpha}{2}}}$}} \\
& {\resizebox{.98\hsize}{!}{$-\displaystyle\sum_{j=1, j\neq i}^N\frac{\gamma_j C_\alpha}{\varepsilon  b_j} 
\displaystyle \fint
\frac{\left((R_i(x)R_j(y)+R_i'(x)R_j'(y))%
\sin(x-y)+(R_i(x)R_j'(y)-R_i'(x)R_j(y))\cos(x-y)\right)dy}{|\varepsilon b_{i}R_{i}(x)(\cos(x),\sin(x))+w_i-\varepsilon b_{j}R_{j}(x)(\cos(x),\sin(x))-w_j|^\alpha}$}} \\
& \qquad =0,
\end{split}
\end{equation}
and assume that either $\Omega$ or $U$ is zero.
%%%%%%%%%%%%%%%%%%%%%%
%%%%%%%%%%%%%%%

\subsection{Notation and Functional spaces}

We need to establish some notations that will be utilized consistently throughout this paper. Let \(C\) represent any positive constant that may change from one instance to another. We denote the unit disk by \(\mathbb{D}\), while its boundary, the unit circle, is represented by \(\partial(\mathbb{D})=\mathbb{T}\). For simplicity, let us define
$$
\fint_{0}^{2\pi} f(\tau)\, d\tau \equiv \frac{1}{2\pi}\int_{0}^{2\pi} f(\tau)\, d\tau.
$$
denote the mean value of the integral over the unit circle.
 The primary strategy for demonstrating the existence of \(N\)  vortex patches for the gSQG equations \eqref{1-1} with $\alpha\in[1,2)$, involves applying the implicit function theorem to the system of equations represented by \eqref{eq:funct}. To accomplish this, it is crucial to verify that the functional \(\mathcal{F}^\alpha\) meets the necessary regularity conditions. The relevant functional spaces utilized in this analysis are outlined below
\begin{equation*}
	X^k=\left\{ f_i\in H^k, \ f_i(x)= \sum\limits_{n=2}^{\infty}\left(a^i_n\cos(n x)+d^i_n \sin (n x)\right)\right\},
\end{equation*}

\begin{equation*}
	Y^{k}=\left\{ g_i\in H^{k}, \ g_i(x)= \sum\limits_{n=1}^{\infty}\left(a^i_n\cos(n x)+d^i_n \sin (n x)\right)\right\},
\end{equation*}
and
\begin{equation*}
Y_{0}^{k}=Y^{k}/\text{span}\{\sin (x),\cos(x)\}=\left\{ g_i\in Y^{k},\
g_i(x)=\sum\limits_{n=2}^{\infty }\left(a^i_n\sin(n x)+d^i_n \cos (n x)\right)\right\} ,
\end{equation*}%
and

For the SQG equation $(\alpha=1)$, we also need the following space
\begin{equation}\label{Valpha0}
\mathcal{X}^{k+\log} :=\underbrace{X^{k+\log}\times\cdots \times X^{k+\log}}_{\text{$N$ times}}, \quad
\mathcal{Y}^k:=\underbrace{Y^{k}\times\cdots \times Y^{k}}_{\text{$N$ times}}, \quad\textnormal{and}\quad  
\mathcal{Y}^k_0:=\underbrace{Y_{0}^{k}\times\cdots \times Y_{0}^{k}}_{\text{$N$ times}}
\end{equation}
with
\begin{equation*}
	\begin{split}
 {\resizebox{.98\hsize}{!}{$ X^{k+\log}=\left\{ f_i\in H^k, \ f_i(x)= \sum\limits_{n=2}^{\infty}\left(a^i_n\cos(n x)+d^i_n \sin (n x)\right), \ \left\|\displaystyle\int_0^{2\pi}\frac{\partial^kg(x-y)-\partial^kg(x)}{|\sin(\frac{y}{2})|}dy\right\|_{L^2}<\infty \right\},$}}
	\end{split}
\end{equation*}
For the generalized SQG equations  ($1<\alpha<2$), we will need the following fractional spaces
\begin{equation}\label{alpha}
 {\resizebox{.98\hsize}{!}{$\mathcal{X}^{k+\alpha-1} :=\underbrace{X^{k+\alpha-1}\times\cdots \times X^{k+\alpha-1}}_{\text{$N$ times}}, \quad
\mathcal{Y}^k:=\underbrace{Y^{k}\times\cdots \times Y^{k}}_{\text{$N$ times}}, \quad\textnormal{and}\quad  
\mathcal{Y}^k_0:=\underbrace{Y_{0}^{k}\times\cdots \times Y_{0}^{k}}_{\text{$N$ times}}$}} ,
\end{equation}
where $X^{k+\alpha-1}$ is defined by
\begin{equation*}
	\begin{split}
 {\resizebox{.98\hsize}{!}{$		X^{k+\alpha-1}=\left\{ f_i\in H^k, \ f_i(x)= \sum\limits_{n=2}^{\infty}\left(a^i_n\cos(n x)+d^i_n \sin (n x)\right), \ \left\|\displaystyle\int_0^{2\pi}\frac{\partial^kg(x-y)-\partial^kg(x)}{|\sin(\frac{y}{2})|^\alpha}dy\right\|_{L^2}<\infty \right\},$}}
	\end{split}
\end{equation*}

The norms for the spaces \(X^k\) and \(Y^k\) are defined using the \(H^k\)-norm. In the cases of \(X^{k+\log}\) and \(X^{k+\alpha-1}\), their norms are formulated as the sum of the \(H^k\)-norm and the integral as specified in their definitions, respectively. It is noteworthy that for all \(\nu > 0\), the following embeddings are valid: \(X^{k+\nu} \hookrightarrow  X^{k+\log} \hookrightarrow  X^k\). In this paper, we consistently assume that \(k \geq 3\).

We denote by $\mathcal{B}_X$ the unit ball  as follows
\begin{equation*}
    \mathcal{B}_X:= \begin{cases}
       &f\in \mathcal{X}^{k+\log}: \|f\|_{\mathcal{X}^{k+\log}(\mathbb{T})}< 1,\qquad \alpha=1, \\
       & f\in \mathcal{X}^{k+\alpha-1}: \|f\|_{\mathcal{X}^{k+\alpha-1}(\mathbb{T})}< 1 ,\qquad \alpha\in(1,2) ,
    \end{cases}
    \end{equation*}
and by $\mathcal{B}_Y$ the unit ball in $\mathcal{Y}^k$,
$$
\mathcal{B}_Y:=\big\{f\in \mathcal{Y}^k: \|f\|_{\mathcal{Y}^k(\mathbb{T})}< 1\big\}.
$$

\section{Extension, Regularity and linearization of the functional}\label{section3}

We seek domains $\mathcal{O}_i^\varepsilon$ that are perturbations of the unit disc, with perturbation amplitudes on the order of $\varepsilon b_i$. Specifically, we consider the following expansion
\begin{align}\label{conf0}
R_i(x) = 1 + \varepsilon\abs{\varepsilon}^{\alpha} b_i^{1+\alpha} f_i(x). 
\end{align}
The coefficient $|\varepsilon|^{\alpha}$ in \eqref{conf0} is a result of the singularity in the gSQG kernel and thus,  to desingularize this system in terms of $\varepsilon$,  we adopt the approach from \cite{Hmidi-Mateu} and transform the equation \eqref{eq:funct} into
\begin{equation}\label{funct}
    \mathcal{F}^\alpha_i(\varepsilon, f, \lambda,x)= \mathcal{F}^\alpha_{i, 1}\left(\varepsilon, f_i, x\right) +\mathcal{F}^\alpha_{i, 2}(\varepsilon, f, \lambda,x)+\mathcal{F}^\alpha_{i, 3}(\varepsilon, f, \lambda,x), \quad \forall i=1, \ldots, N,
\end{equation}
and the terms $\mathcal{F}_{i, j}\left(\varepsilon, f, \lambda,x \right)$,  for $j=1,2,3$ and $i=1,\cdots,N$ are given by
\begin{equation}\label{f1}
   \begin{aligned}
 &\mathcal{F}^\alpha_{i, 1} =  -\Omega \left[ \varepsilon b_{i}R_{i}(x)R_i^{\prime }(x)-w_i\cdot R_{i}^{\prime
}(x)(\cos (x),\sin(x))-w_i\cdot R_{i}(x)(-\sin (x),\cos(x))\right]   \\
&
\qquad \qquad +U \left(R_{i}^{\prime
}(x)(\cos (x),\sin(x))+R_{i}(x)(-\sin (x),\cos(x))\right) , 
   \end{aligned}
\end{equation}
\begin{equation}\label{f2}
    \begin{aligned}
        &  \mathcal{F}^\alpha_{i, 2}\left(\varepsilon, f_i, x\right)=\frac{C_\alpha \gamma_i}{2 \pi \varepsilon|\varepsilon|^{\alpha} b_i^{1+\alpha} R_i( x)}  \times \\
        & \int_0^{2 \pi} \frac{\left(\left(R_i( x) R_i( y)+R_i^{\prime}( x) R_i^{\prime}( y)\right)\sin( x- y)+\left(R_i( x) R_i^{\prime}( y)-R_i^{\prime}( x) R_i( y)\right) \cos( x- y)\right) d  y}{\left( A( x,  y)+\varepsilon|\varepsilon|^\alpha b_i^{1+\alpha}  B\left(  f_i,  x, y\right)\right)^{\frac{\alpha}{2}}},
    \end{aligned}
\end{equation}
and
\begin{equation}\label{f3}
    \begin{aligned}
        &  \mathcal{F}^\alpha_{i, 3}(\varepsilon, f, \lambda,x)=\frac{C_\alpha }{2 \pi \varepsilon R_i( x)} \sum_{j \neq i} \\
        & {\resizebox{.96\hsize}{!}{$\frac{\gamma_j}{b_j}\displaystyle\int_0^{2 \pi} \frac{\left(\left(R_i( x) R_j( y)+R_i^{\prime}( x) R_j^{\prime}( y)\right)\sin( x- y)+\left(R_i( x) R_j^{\prime}( y)-R_i^{\prime}( x) R_j( y)\right) \cos( x- y)\right) d  y}{\left(A_{i, j}+\varepsilon B_{i, j}( x,  y)\right)^{\frac{\alpha}{2}}},$}}
    \end{aligned}
\end{equation}
\noindent where, for the sake of convenience, we adopted the following notations
\begin{equation}\label{comput}
	\begin{split}
		&A( x, y):=4\sin^2\left(\frac{ x-y}{2}\right),\,\,\,A_{ij}=|w_i-w_j|^2,\\
				&
    {\resizebox{.95\hsize}{!}{$B(f, x,  y):=4(f( x)+f( y))\sin^2\left(\frac{ x- y}{2}\right)+\varepsilon|\varepsilon|^{\alpha}b^{1+\alpha}_i\left[(f( x)-f( y))^2+4 f( x)f( y)\sin^2\left(\frac{ x- y}{2}\right)\right],$}}\\
		&
  {\resizebox{.95\hsize}{!}{$B_{ij}( x,  y,\lambda):=2(w_i-w_j)\cdot(b_i(\cos x, \sin x)-b_j(\cos y, \sin y))+2\varepsilon|\varepsilon|^{\alpha}(w_i-w_j)\cdot\left(b^{2+\alpha}_if_i( x)(\cos x, \sin x)\right.$}}\\
		&
    {\resizebox{.95\hsize}{!}{$\quad\,\left.-b^{2+\alpha}_jf_j( y)(\cos y, \sin y)\right)+\varepsilon\left[b_i(1+\varepsilon|\varepsilon|^{\alpha}b_i^{1+\alpha} f_i( x))(\cos x, \sin x)-b_j(1+\varepsilon|\varepsilon|^{\alpha} b_j^{1+\alpha}  f_j( y))(\cos y, \sin y)\right]^2.$}}
	\end{split}
\end{equation}

We then define the nonlinear operator 
\begin{equation*}
\mathcal{F}^\alpha(\varepsilon,f,\lambda):=\big(\mathcal{F}^\alpha_1(\varepsilon,f,\lambda),\ldots,\mathcal{F}^\alpha_N(\varepsilon,f,\lambda)\big).\end{equation*}

To apply the implicit function theorem at \(\varepsilon = 0\), it is essential to extend the functions $\mathcal{F}^\alpha$
which were introduced in Section \ref{section2}, to a domain that includes \(\varepsilon \leq 0\), while ensuring that these functions maintain \(C^1\) regularity. The first step in this process involves verifying the continuity of \(\mathcal{F}^\alpha\).

\begin{proposition}\label{p3-1}
Let $\alpha\in(1,2)$ and let $\lambda^*$ solve \eqref{alg-sysP}.	There exists $\varepsilon_0>0$ and a small neighborhood $\Lambda$ of $\lambda^*$  such that the functional $\mathcal{F}^\alpha$ can be extended from $\left(-\varepsilon_0, \varepsilon_0\right) \times \mathcal{B}_X\times \Lambda$ to $\mathcal{B}_Y$ as  a continuous functional.
\end{proposition}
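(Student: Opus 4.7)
The plan is to treat each of the three constituent pieces $\mathcal{F}^\alpha_{i,1}, \mathcal{F}^\alpha_{i,2}, \mathcal{F}^\alpha_{i,3}$ separately, since the obstructions are of different nature. The rigid-body term $\mathcal{F}^\alpha_{i,1}$ given in \eqref{f1} is polynomial in $R_i$ and $R_i'$, with coefficients smooth in $\lambda$ and affine in $\varepsilon$, so it defines a smooth map into $\mathcal{Y}^k$ without restriction on the sign of $\varepsilon$; the absence of a constant Fourier mode in $x$ is evident from its explicit trigonometric form.

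For the cross-interaction term $\mathcal{F}^\alpha_{i,3}$, the apparent difficulty is the $1/\varepsilon$ prefactor. By shrinking $\Lambda$ if necessary, I would ensure that $A_{ij}=|w_i-w_j|^2$ stays bounded away from zero, so that $A_{ij}+\varepsilon B_{ij}(x,y,\lambda)\ge c_0>0$ uniformly in $(x,y)\in\mathbb{T}^2$ for all small $\varepsilon$. The kernel is then smooth, and Taylor expansion in $\varepsilon$ gives at zeroth order a numerator proportional to $\sin(x-y)$ whose integral over $y\in[0,2\pi)$ vanishes. This cancellation absorbs the $1/\varepsilon$ prefactor and yields a smooth map into $\mathcal{Y}^k$; symmetry in $y$ of the successive terms keeps the output free of the constant mode.

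The main obstacle is the self-interaction piece $\mathcal{F}^\alpha_{i,2}$, whose kernel is genuinely singular at $y=x$ and whose $1/(\varepsilon|\varepsilon|^\alpha)$ prefactor can only be removed by an exact cancellation inside the integral. I would perform a joint expansion: writing $R_i=1+\varepsilon|\varepsilon|^\alpha b_i^{1+\alpha}f_i$, the numerator in \eqref{f2} becomes $\sin(x-y)+\varepsilon|\varepsilon|^\alpha b_i^{1+\alpha}\mathcal{N}_1(f_i;x,y)+\mathcal{O}(\varepsilon^2|\varepsilon|^{2\alpha})$, while the denominator expands as
$$
\bigl(A(x,y)+\varepsilon|\varepsilon|^\alpha b_i^{1+\alpha}B(f_i,x,y)\bigr)^{-\alpha/2}
= A(x,y)^{-\alpha/2}\Bigl(1-\tfrac{\alpha}{2}\varepsilon|\varepsilon|^\alpha b_i^{1+\alpha}\tfrac{B(f_i,x,y)}{A(x,y)}+\cdots\Bigr).
$$
The leading product $\sin(x-y)/A(x,y)^{\alpha/2}$ is odd in $x-y$ and integrates to zero, which cancels the singular prefactor. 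The surviving order-$\varepsilon^0$ contribution involves, after one integration by parts, the fractional integral $\int_0^{2\pi}\frac{f_i(x)-f_i(y)}{|2\sin((x-y)/2)|^\alpha}\,dy$, which is precisely the quantity whose $H^k$ control is built into the definition of $X^{k+\alpha-1}$ in \eqref{alpha}.

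The technical heart is then a careful bookkeeping: every remainder in the expansion must be shown to be continuous in $(\varepsilon,f,\lambda)$ with values in $H^k$. I reduce each such term by integration by parts (shifting derivatives from $R_i$ onto the kernel) and control the resulting singular convolutions on $\mathbb{T}$ using the seminorm defining $X^{k+\alpha-1}$ together with standard Young-type estimates. Extension to $\varepsilon\le 0$ is automatic, since $\varepsilon\mapsto\varepsilon|\varepsilon|^\alpha$ is odd and continuous on $\mathbb{R}$, and the non-overlap condition \eqref{intersection} is stable under sign change of $\varepsilon$ for $|\varepsilon|$ sufficiently small.
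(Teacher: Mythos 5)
Your plan is essentially the paper's own proof: the same three-way decomposition, the same Taylor expansion of the kernels (the paper uses the exact formula \eqref{taylor} with integral remainder) so that the odd leading terms $\sin(x-y)\,A(x,y)^{-\alpha/2}$ and $\sin(x-y)\,A_{ij}^{-\alpha/2}$ (with $|w_i-w_j|$ bounded below after shrinking $\Lambda$) cancel the singular prefactors $\varepsilon^{-1}|\varepsilon|^{-\alpha}$ and $\varepsilon^{-1}$, and the same control of the surviving fractional-integral terms via the seminorm defining $X^{k+\alpha-1}$, with continuity in $f$ reduced to mean-value estimates as in \eqref{2-7}. One caution on your ``bookkeeping'': in the remainder estimates the derivatives should remain on $f_i$ in difference form, e.g.\ $\partial^k f_i(x)-\partial^k f_i(y)$ integrated against $|\sin(\tfrac{x-y}{2})|^{-\alpha}$, which is precisely what the $X^{k+\alpha-1}$ norm controls (this is how the paper bounds $\partial^{k-1}\mathcal{F}^\alpha_{i22}$), rather than being shifted onto the kernel, whose derivative behaves like $|\sin(\tfrac{x-y}{2})|^{-\alpha-1}$ and is not integrable for $\alpha\in(1,2)$.
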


% \textcolor{red}{corregido hasta aqui}
\begin{proof}
By using \eqref{f1}, we can rewrite $\mathcal{F}^\alpha_{i, 1}:\left(-\varepsilon_0, \varepsilon_0\right) \times \mathcal{B}_X\times \Lambda \rightarrow \mathcal{B}_Y$
as
\begin{equation}
\mathcal{F}^\alpha_{i, 1}=\Omega w_i\cdot (-\sin (x),\cos(x))   +U\left(-\sin (x),\cos(x)\right)+\varepsilon|\varepsilon|^\alpha\mathcal{R}_{i1}(\varepsilon,f_i) ,
\label{2-1}
\end{equation}
where \(\mathcal{R}_1(\varepsilon, f_i)\) is continuous. Consequently, we can infer that \(\mathcal{F}^\alpha_{i, 1}\) is also continuous. It is important to observe that we can decompose the functional as follows
\begin{equation}\label{2-2}
	\begin{split}
    		\mathcal{F}^\alpha_{i, 2}=&{\resizebox{.94\hsize}{!}{$\frac{C_\alpha \gamma_i}{\varepsilon|\varepsilon|^{\alpha}b_i^{1+\alpha}}\displaystyle\fint\frac{(  1+\varepsilon|\varepsilon|^\alpha b_i^{1+\alpha} f_i( y))\sin( x- y)d y}{\left(  A( x,  y)+\varepsilon|\varepsilon|^\alpha b_i^{1+\alpha} B\left( f_i,  x,  y\right)\right)^{\frac{\alpha}{2}}}
  +C_\alpha \gamma_i\displaystyle\fint \frac{(f'_i( y)-f'_i( x))\cos( x- y)d y}{\left(  A( x,  y)+\varepsilon|\varepsilon|^\alpha b_i^{1+\alpha} B\left(f_i,  x,  y\right)\right)^{\frac{\alpha}{2}}}$}}\\		
		&
  +\frac{C_\alpha \gamma_i\varepsilon |\varepsilon|^\alpha b_i^{1+\alpha} f'_i( x)}{  1+\varepsilon|\varepsilon|^\alpha b_i^{1+\alpha} f_i( x)}\int\!\!\!\!\!\!\!\!\!\; {}-{} \frac{(f_i( x)-f_i( y))\cos( x- y)d y}{\left(  A( x,  y)+\varepsilon|\varepsilon|^\alpha b_i^{1+\alpha} B\left( f_i,  x,  y\right)\right)^{\frac{\alpha}{2}}}\\	
		&
  \qquad+\frac{C_\alpha \gamma_i\varepsilon|\varepsilon|^\alpha b_i^{1+\alpha} }{  1+\varepsilon|\varepsilon|^\alpha b_i^{1+\alpha} f_i( x)}\int\!\!\!\!\!\!\!\!\!\; {}-{} \frac{f'_i( x)f'_i( y)\sin( x- y)d y}{\left(  A( x,  y)+\varepsilon|\varepsilon|^\alpha b_i^{1+\alpha} B\left( f_i,  x,  y\right)\right)^{\frac{\alpha}{2}}}\\	
		=&
  \mathcal{F}^\alpha_{i21}+\mathcal{F}^\alpha_{i22}+\mathcal{F}^\alpha_{i23}+\mathcal{F}^\alpha_{i24} .
	\end{split}
\end{equation}
Specifically, the most singular component in $\mathcal{F}^\alpha_{i, 2}$  is given by $\mathcal{F}^\alpha_{i21}$. To deal with this term we proceed as follows
\begin{equation*}
    \mathcal{F}^\alpha_{i21}:=\frac{C_\alpha \gamma_i}{\varepsilon|\varepsilon|^{\alpha}b_i^{1+\alpha}}\int\!\!\!\!\!\!\!\!\!\; {}-{} \frac{(  1+\varepsilon|\varepsilon|^{\alpha} b_i^{1+\alpha} f_i( y))\sin( x- y)}{\left(  A( x,  y)+\varepsilon|\varepsilon|^{\alpha} b_i^{1+\alpha}B\left( f_i,  x,  y\right)\right)^{\frac{\alpha}{2}}} d y,
\end{equation*}
the potential singularity arising from \(\varepsilon=0\) may only manifest when we compute the zeroth-order derivative of \(\mathcal{F}^\alpha_{i21}\). Throughout the proof, we will often use the following Taylor's formula
\begin{equation}\label{taylor}
	\frac{1}{(A+B)^s}=\frac{1}{A^s}-s\int_0^1\frac{B}{(A+tB)^{1+s}}dt.
\end{equation}
Then, we decompose the kernel into two parts
\begin{equation}\label{2-6}
	\begin{split}
		\mathcal{F}^\alpha_{i21}&{\resizebox{.95\hsize}{!}{$	=\frac{C_{\alpha}\gamma_i}{\varepsilon|\varepsilon|^{\alpha}b_i^{1+\alpha}}\displaystyle\fint\frac{ \sin{( x- y)}d y}{\left(  A( x, y)+\varepsilon|\varepsilon|^{\alpha}b_i^{1+\alpha}B\left( f_{i}, x, y\right)\right)^{\frac{\alpha}{2}}}+C_\alpha\gamma_i\displaystyle\fint \frac{f_{i}( y)\sin{( x- y)}d y}{\left(   A( x, y)+\varepsilon|\varepsilon|^{\alpha}b_i^{1+\alpha} B\left(   f_{i},  x,  y\right)\right)^{\frac{\alpha}{2}}}$}}\\
		&=\frac{C_\alpha \gamma_i}{\varepsilon|\varepsilon|^{\alpha}b_i^{1+\alpha}}\int\!\!\!\!\!\!\!\!\!\; {}-{} \frac{\sin( x- y)d y}{ A(x,y)^{\frac{\alpha}{2}}}-\frac{\alpha C_\alpha \gamma_i}{2}\int\!\!\!\!\!\!\!\!\!\; {}-{} \int_0^1 \frac{B\sin( x- y)dt d y}{\left(  A( x,  y)+t\varepsilon|\varepsilon|^\alpha b_i^{1+\alpha}B\left( f_i,  x,  y\right)\right)^{\frac{\alpha+2}{2}}}\\
  &\qquad \qquad+C_\alpha\gamma_i\int\!\!\!\!\!\!\!\!\!\;{}-{} \frac{f_{i}( y)\sin{( x- y)}d y}{\left(   A( x, y)+\varepsilon|\varepsilon|^{\alpha} b_i^{1+\alpha}B\left(   f_{i},  x,  y\right)\right)^{\frac{\alpha}{2}}}\\
		&=-\frac{\alpha C_\alpha \gamma_i}{2}\int\!\!\!\!\!\!\!\!\!\; {}-{} \int_0^1 \frac{B\sin( x- y)dt d y}{(A(x,y))^{\frac{\alpha}{2}+1}}\\
  &
\ \ \ \ + \frac{C_\alpha\gamma_i\varepsilon|\varepsilon|^\alpha \alpha(\alpha+2)}{4}\int\!\!\!\!\!\!\!\!\!\; {}-{}\int_0^1 \int_0^1 \frac{t B^{2}\sin( x- y)d\tau dt d y}{\left(  A( x,  y)+t\tau\varepsilon|\varepsilon|^\alpha b_i^{1+\alpha}B\left( f_i,  x,  y\right)\right)^{\frac{\alpha+4}{2}}}\\
		&\ \ \ \ {\resizebox{.92\hsize}{!}{$	+C_\alpha\gamma_i\displaystyle\fint\frac{f_i( y)\sin( x- y)d y}{A(x,y)^{\frac{\alpha}{2}}}-\frac{C_\alpha \alpha \gamma_i\varepsilon|\varepsilon|^\alpha}{2}\displaystyle
  \fint\int_0^1 \frac{B f_i( y)\sin( x- y)dt d y}{\left(  A( x,  y)+\varepsilon|\varepsilon|^\alpha b_i^{1+\alpha} B\left( f_i,  x,  y\right)\right)^{\frac{\alpha+2}{2}}}$}}\\
        &= -\frac{\alpha C_\alpha \gamma_i}{2}\int\!\!\!\!\!\!\!\!\!\; {}-{}  \frac{f_i( y)A(x,y)\sin( x- y)d y}{A(x,y)^{\frac{\alpha+2}{2}}}+C_\alpha \gamma_i\int\!\!\!\!\!\!\!\!\!\; {}-{} \frac{f_i( y)\sin( x- y)d y}{A(x,y)^{\frac{\alpha}{2}}}+\varepsilon|\varepsilon|^\alpha\mathcal{R}_{i11}(\varepsilon,f_i)\\
		&=C_\alpha \gamma_i\left(1-\frac{\alpha}{2}\right)\int\!\!\!\!\!\!\!\!\!\; {}-{} \frac{f_i( y)\sin( x- y)d y}{\left|4\sin(\frac{ x- y}{2})\right|^\alpha}+\varepsilon|\varepsilon|^\alpha\mathcal{R}_{i21}(\varepsilon,f_i),
	\end{split}
\end{equation}
where we used the fact that \eqref{comput} and $\mathcal{R}_{i21}$ is not singular with respect to $\varepsilon$.

Next, we proceed to differentiate $\mathcal{F}^\alpha_i$ with respect to $ x$ up to $\partial^{k-1}$ times. Our initial focus will be on the most singular term, namely, $ \partial ^{k-1}\mathcal{F}^\alpha_{i22}$
\begin{equation*}
\begin{split}
& {\resizebox{.96\hsize}{!}{$\partial ^{k-1}\mathcal{F}^\alpha_{i22}=C_\alpha\gamma _{i}\displaystyle\fint\frac{%
(\partial ^{k}f_{i}(y)-\partial ^{k}f_{i}(x))\cos (x-y)dy}{\left( A( x, y)+\varepsilon|\varepsilon|^{\alpha}b_i^{1+\alpha} B\left( f_{i}, x, y\right)\right)^{\frac{\alpha}{2}}}  -C_\alpha\gamma _{i}\varepsilon |\varepsilon |b_{i}^{2}\displaystyle\fint\frac{\cos (x-y)}{\left( A( x, y)+\varepsilon|\varepsilon|^{\alpha}b_i^{1+\alpha} B\left( f_{i}, x, y\right)\right)^{\frac{\alpha+2}{2}}} $}}\\
& \ \ \ \ {\resizebox{.96\hsize}{!}{$\times \left(\varepsilon |\varepsilon|^\alpha
b_i^{1+\alpha}(f_i(x)-f_i(y))(f'_i(x)-f'_i(y))+2((1+\varepsilon|\varepsilon|^\alpha
b_i^{1+\alpha}f_i(x))f'_i(y)+(1+\varepsilon|\varepsilon|^\alpha
b_i^{1+\alpha}f_i(y))f'_i(x))\sin^2(\frac{x-y}{2})\right)$}} \\
& \ \ \ \ \times (\partial ^{k-1}f_{i}(y)-\partial ^{k-1}f_{i}(x))dy+l.o.t,
\end{split}%
\end{equation*}%
where \(l.o.t\) refers to lower order terms. We now utilize the fact that \(\| \partial^m f_i \|_{L^\infty} \leq C \| f_i \|_{X^{k+\alpha-1}} < \infty\) for \(m=0, 1, 2\), given that \(f_i(x) \in X^{k+\alpha-1}\) for \(k \geq 3\). By applying the Hölder inequality in conjunction with the mean value theorem, we obtain
\begin{equation*}
\begin{split}
\left\Vert \partial ^{k-1}\mathcal{F}^\alpha_{i22}\right\Vert _{L^{2}}& \leq C\left\Vert \int
\!\!\!\!\!\!\!\!\!\;{}-{}\frac{\partial ^{k}f_{i}(x)-\partial ^{k}f_{i}(y)}{%
|4\sin (\frac{x-y}{2})|^\alpha}dy\right\Vert _{L^{2}}+C\left\Vert \int
\!\!\!\!\!\!\!\!\!\;{}-{}\frac{\partial ^{k-1}f_{i}(x)-\partial
^{k-1}f_{i}(y)}{|4\sin (\frac{x-y}{2})|^\alpha}dy\right\Vert _{L^{2}} \\
& \leq C\Vert f_{i}\Vert _{X^{k+\alpha-1 }}+C\Vert f_{i}\Vert _{X^{k+\alpha-2}}<\infty .
\end{split}%
\end{equation*}%
Now, observe that \(\mathcal{F}^\alpha_{i23}\) is less singular than \(\mathcal{F}^\alpha_{i22}\). Therefore, it is straightforward to establish an upper bound for \(\left\Vert \partial^{k-1}\mathcal{F}^\alpha_{i23}\right\Vert_{L^{2}}\). Next, we turn our attention to the final term \(\mathcal{F}^\alpha_{i24}\) and differentiate it \(k-1\) times with respect to \(x\) to derive
\begin{equation*}
\begin{split}
&
\partial^{k-1}\mathcal{F}^\alpha_{i24}=\frac{-C_\alpha\gamma_i\varepsilon^{2+2\alpha}
b_i^{2+2\alpha}\,\partial^{k-1}f_i(x)}{(1+\varepsilon|\varepsilon|^{\alpha}b_i^{1+\alpha}
f_i(x))^2}\displaystyle\fint 
\frac{f'_i(x)f'_i(y)\sin(x-y)dy}{\left( A( x, y)+\varepsilon|\varepsilon|^{\alpha}b_i^{1+\alpha} B\left( f_{i}, x, y\right)\right)^{\frac{\alpha}{2}}} \\
&
\quad +\frac{C_\alpha\gamma _{i}\varepsilon |\varepsilon |^\alpha b_{i}^{1+\alpha}}{1+\varepsilon
|\varepsilon |^\alpha b_{i}^{1+\alpha}f_{i}(x)}\displaystyle\fint\frac{(f_{i}^{\prime }(x)\partial ^{k}f_{i}(y)+\partial
^{k}f_{i}(x)f_{i}^{\prime }(y))\sin
(x-y)dy}{\left( A( x, y)+\varepsilon|\varepsilon|^{\alpha}b_i^{1+\alpha} B\left( f_{i}, x, y\right)\right)^{\frac{\alpha}{2}}} \\
& \quad -\frac{2C_\alpha\gamma _{i}\varepsilon^{2+2\alpha}
b_i^{2+2\alpha}}{1+\varepsilon
|\varepsilon |^\alpha b_{i}^{1+\alpha}f_{i}(x)}\displaystyle\fint\frac{\sin
(x-y)}{\left( A( x, y)+\varepsilon|\varepsilon|^{\alpha}b_i^{1+\alpha} B\left( f_{i}, x, y\right)\right)^{\frac{\alpha+2}{2}}} \\
&
\ \ \ \ {\resizebox{.96\hsize}{!}{$\times \left(\varepsilon |\varepsilon|^\alpha
b_i^{1+\alpha}(f_i(x)-f_i(y))(f'_i(x)-f'_i(y))+2((1+\varepsilon|\varepsilon|^\alpha
b_i^{1+\alpha}f_i(x))f'_i(y)+(1+\varepsilon|\varepsilon|^\alpha
b_i^{1+\alpha}f_i(y))f'_i(x))\sin^2(\frac{x-y}{2})\right)$}} \\
&
\qquad \qquad \times (f_{i}^{\prime }(x)\partial ^{k-1}f_{i}(y)+\partial
^{k-1}f_{i}(x)f_{i}^{\prime }(y))+l.o.t.
\end{split}%
\end{equation*}%
Using the definition of the space \(X^{k+\alpha-1}\), we can establish the following estimate
\begin{equation*}
\begin{split}
&{\resizebox{.96\hsize}{!}{$\left\Vert \partial ^{k-1}\mathcal{F}^\alpha_{i24}\right\Vert _{L^{2}} \leq C\varepsilon
|\varepsilon |^\alpha\left( \varepsilon |\varepsilon |^\alpha\Vert f_{i}^{\prime }\Vert
_{L^{\infty }}^{2}\Vert \partial ^{k-1}f_{i}\Vert _{L^{2}}+\Vert
f_{i}^{\prime }\Vert _{L^{\infty }}\Vert \partial ^{k}f_{i}\Vert
_{L^{2}}+\varepsilon |\varepsilon |^\alpha\Vert f_{i}\Vert _{L^{\infty }}\Vert
f_{i}^{\prime }\Vert _{L^{\infty }}^{2}\Vert \partial ^{k-1}f_{i}\Vert
_{L^{2}}\right)$}} \\
&\qquad \qquad\quad \leq C\varepsilon |\varepsilon |^\alpha\Vert f_{i}\Vert _{X^{k+\alpha-1 }}<\infty .
\end{split}%
\end{equation*}%
Based on the preceding computations, we can conclude that the nonlinear functional \(\mathcal{F}^\alpha_{i2}\) is an element of \(Y^k_0\).

Next, we will demonstrate the continuity of \(\mathcal{F}^\alpha_{i2}\), particularly focusing on the most singular term, \(\mathcal{F}^\alpha_{i22}\). For this purpose, we introduce the following notation. For any general function \(g_{i}(x)\), we define
\begin{equation*}
\Delta f_{i}=f_{i}(x)-f_{i}(y),\ \mbox{where}\ f_{i}=f_{i}(x),\ \mbox{and}\quad
\tilde{f}_{i}=f_{i}(y),\,\mbox{for}\,i=1,2 ,
\end{equation*}%
and
\begin{equation*}
D_{\alpha }(f_{i})=b_{i}^{2+2\alpha }\varepsilon ^{2+2\alpha }\Delta
f_{i}^{2}+4(1+\varepsilon |\varepsilon |^{\alpha }b_{i}^{1+\alpha
}f_{i})(1+\varepsilon |\varepsilon |^{\alpha }b_{i}^{1+\alpha }\tilde{f}%
_{i})\sin ^{2}\left( \frac{x-y}{2}\right) .
\end{equation*}%
Consequently, for \(f_{i1}\) and \(f_{i2}\) belonging to \(X^{k+\alpha-1}\) with \(i=1,2\), we have
\begin{equation*}
\begin{split}
\mathcal{F}^\alpha_{i22}(\varepsilon ,f_{i1})& -\mathcal{F}^\alpha_{i22}(\varepsilon ,f_{i2})=C_\alpha\gamma _{i}\int
\!\!\!\!\!\!\!\!\!\;{}-{}\frac{(\Delta f_{i1}^{\prime }-\Delta
f_{i2}^{\prime })\cos (x-y)dy}{D_{1}(f_{i1})^{\frac{1}{2}}} \\
& +\left(C_\alpha \gamma _{i}\displaystyle\fint\frac{\Delta
f_{i2}^{\prime }\cos (x-y)dy}{D_{1}(f_{i1})^{\frac{\alpha}{2}}}-C_\alpha\gamma _{i}\int
\!\!\!\!\!\!\!\!\!\;{}-{}\frac{\Delta f_{i2}^{\prime }\cos (x-y)dy}{%
D_{1}(f_{i2})^{\frac{\alpha}{2}}}\right) \\
& =I_{1}+I_{2}.
\end{split}%
\end{equation*}%
It is clear that \(I_{1}\) can be bounded as follows
\[
\| I_{1} \|_{Y^{k-1}} \leq C \| f_{i1} - f_{i2} \|_{X^{k+\alpha-1}}.
\]
To estimate \(I_{2}\), we can apply the mean value theorem in the following manner
\begin{equation}
\begin{split}
& \frac{1}{D_{\alpha }(f_{i1})^{\frac{\alpha }{2}}}-\frac{1}{D_{\alpha
}(f_{i2})^{\frac{\alpha }{2}}}=\frac{\alpha }{2}\frac{D_{\alpha
}(f_{i2})-D_{\alpha }(f_{i1})}{D_{\alpha }(\delta _{x,y}f_{i1}+(1-\delta
_{x,y})f_{i2})^{1-\frac{\alpha }{2}}D_{\alpha }(f_{i1})^{\frac{\alpha }{2}%
}D_{\alpha }(f_{i2})^{\frac{\alpha }{2}}} \\
& {\resizebox{.98\hsize}{!}{$=\frac{\alpha}{2}\frac{b_i^{2+2\alpha}|%
\varepsilon|^{2+2\alpha}(\Delta f_{i2}^2-\Delta
f_{i1}^2)+4\varepsilon|\varepsilon|^\alpha b^{1+\alpha}_i
((f_{i2}-f_{i1})(1+\varepsilon|\varepsilon|^\alpha b^{1+\alpha}_i \tilde
f_{i2})+(\tilde f_{i2}-\tilde f_{i1})(1+\varepsilon|\varepsilon|^\alpha
b^{1+\alpha}_i
f_{i1}))\sin^2(\frac{x-y}{2})}{D_\alpha(\delta_{x,y}f_{i1}+(1-%
\delta_{x,y})f_{i2})^{1-\frac{\alpha}{2}}D_\alpha(f_{i1})^\frac{\alpha}{2}D_%
\alpha(f_{i2})^\frac{\alpha}{2}}$}},
\end{split}
\label{2-7}
\end{equation}%
for some \(\delta_{x,y} \in (0,1)\), it holds that \(D_{\alpha}(g) \sim \sin^2\left(\frac{x-y}{2}\right) \sim \frac{|x-y|^2}{4}\) as \(|x-y|\) approaches \(0\) and
\begin{equation*}
\begin{split}
& {\resizebox{.9\hsize}{!}{$ \partial^{k-1}I_2\sim C\displaystyle\fint 
{}-{}\frac{\partial^{k-1}f_{i2}(x)-\partial^{k-1}f_{i2}(y)}{|\sin(%
\frac{x-y}{2})|^\alpha}\times \left(\frac{|\varepsilon|^{2+2\alpha}b_i^{2+2\alpha}(\Delta
f_{i2}^2-\Delta
f_{i1}^2)}{|x-y|^{1+\alpha}}+4\varepsilon|\varepsilon|^\alpha b_i^{1+\alpha}(f_{i2}-f_{i1}+\tilde
f_{i2}-\tilde f_{i1})\right)dy$}} \\
& \qquad \qquad \qquad +l.o.t.
\end{split}%
\end{equation*}%
Consequently, we can easily establish that 
\[
\| I_{2} \|_{Y^{k-1}} \leq C \| f_{i1} - f_{i2} \|_{X^{k+\alpha-1}},
\]
which leads to the conclusion that \(\mathcal{F}^\alpha_{i2}\) is continuous. We can now apply Taylor's formula to \(\mathcal{F}^\alpha_{i22}\) and \(\mathcal{F}^\alpha_{i23}\) to derive
\begin{equation}
{\resizebox{.96\hsize}{!}{$	\mathcal{F}^\alpha_{i2}=C_\alpha\gamma _{i}\left(1-\frac{\alpha}{2}\right)\displaystyle\fint\frac{f_{i}(x-y)\sin
(y)dy}{|\sin (\frac{y}{2})|^\alpha}-C_\alpha \gamma _{i}\int
\!\!\!\!\!\!\!\!\!\;{}-{}\frac{(f_{i}^{\prime }(x)-f_{i}^{\prime }(x-y))\cos
(y)dy}{|\sin (\frac{y}{2})|^\alpha}+\varepsilon |\varepsilon |^\alpha\mathcal{R}%
_{i2}(\varepsilon ,f_i),$}}
\label{2-8}
\end{equation}%
where \(\mathcal{R}_{2}\) remains continuous as well.

Next, we turn our attention to the final component of the functional $\mathcal{F}^\alpha_{i}$, namely $\mathcal{F}^\alpha_{i3}$. Once again to manage the singularities at $\varepsilon =0$, we use \eqref{comput}
and the apply the Taylor formula (\ref{taylor}) on the element $\mathcal{F}^\alpha_{i3}$
\begin{equation}\label{333}
    \begin{aligned}
        &{\resizebox{.99\hsize}{!}{$\mathcal{F}^\alpha_{i3}=\frac{C_\alpha }{ \varepsilon   R_i( x)} \displaystyle\sum_{j \neq i}  \frac{\gamma_j}{b_j}\displaystyle\fint \frac{\left(\left(  R_i( x) R_j( y)+  R_i^{\prime}( x) R_j^{\prime}( y)\right) \sin ( x- y)+\left(  R_i( x) R_j^{\prime}( y)-  R_i^{\prime}( x) R_j( y)\right) \cos ( x- y)\right) d  y}{\left(A_{i, j}+\varepsilon B_{i, j}( x,  y)\right)^{\frac{\alpha}{2}}}$}} \\
        &
        {\resizebox{.99\hsize}{!}{$=\frac{C_\alpha }{\varepsilon }\displaystyle\sum_{j=1,\,j \neq i}^{N}  \frac{\gamma_j}{b_j}\displaystyle\fint \frac{(1+\varepsilon|\varepsilon|^\alpha b_j^{1+\alpha} f_j( y))\sin( x- y)d y}{\left(A_{i, j}+\varepsilon B_{i, j}( x,  y)\right)^{\frac{\alpha}{2}}}+ \frac{C_\alpha \varepsilon|\varepsilon|^{2\alpha}b_i^{1+\alpha}}{  1+\varepsilon|\varepsilon|^\alpha b_i^{1+\alpha}f_i( x)}\displaystyle\sum_{j=1,\,j \neq i}^{N}  \gamma_j b_j^\alpha\displaystyle\fint \frac{g'_i( x)g'_j( y)\sin( x- y)d y}{\left(A_{i, j}+\varepsilon B_{i, j}( x,  y)\right)^{\frac{\alpha}{2}}}$}}\\
		&
  \quad {\resizebox{.97\hsize}{!}{$+C_\alpha|\varepsilon|^\alpha \displaystyle\sum_{j=1,\,j \neq i}^{N}   \gamma_j b_j^\alpha\displaystyle\fint \frac{f'_j( y)\cos( x- y)d y}{\left(A_{i, j}+\varepsilon B_{i, j}( x,  y)\right)^{\frac{\alpha}{2}}}-\frac{C_\alpha  |\varepsilon|^\alpha b_i^{1+\alpha}f'_i( x)}{  1+\varepsilon|\varepsilon|^{\alpha}b_i^{1+\alpha} f_i( x)}\displaystyle\sum_{j=1,\,j \neq i}^{N} \frac{\gamma_j}{b_j}\displaystyle\fint \frac{(1+\varepsilon|\varepsilon|^\alpha b_j^{1+\alpha} f_j( y))\cos( x- y)d y}{\left(A_{i, j}+\varepsilon B_{i, j}( x,  y)\right)^{\frac{\alpha}{2}}}$}}\\			
        & 
        =\frac{C_\alpha}{\varepsilon} \displaystyle\sum_{j=1,\,j \neq i}^{N}\frac{\gamma_j}{b_j}\int\!\!\!\!\!\!\!\!\!\; {}-{} \frac{\sin( x- y)d y}{\left(A_{i, j}+\varepsilon B_{i, j}( x,  y)\right)^{\frac{\alpha}{2}}}+\varepsilon \mathcal{R}_{i, 3}(\varepsilon ,f,\lambda)\\
        &
          {\resizebox{.99\hsize}{!}{$=\frac{C_\alpha }{\varepsilon }\displaystyle\sum_{j=1,\,j \neq i}^{N}\frac{\gamma_j}{b_j}\displaystyle
        \fint \frac{\sin( x- y)d y}{\left(A_{i, j}\right)^{\frac{\alpha}{2}}}-\frac{\alpha C_\alpha }{2\varepsilon }\displaystyle\sum_{j=1,\,j \neq i}^{N}\frac{\gamma_j}{b_j}\displaystyle
        \fint\int_0^1\frac{\varepsilon  B_{i,j}\sin( x- y)d y d\tau}{\left(A_{i, j}+\tau\varepsilon B_{i, j}( x,  y)\right)^{\frac{\alpha}{2}+1}}+\varepsilon \mathcal{R}_{i, 3}(\varepsilon ,f,\lambda) $}}\\
        &
        =-\frac{\alpha C_\alpha }{2\varepsilon }\displaystyle\sum_{j=1,\,j \neq i}^{N} \frac{\gamma_j}{b_j}\int\!\!\!\!\!\!\!\!\!\; {}-{}\frac{\varepsilon  B_{i,j}\sin( x- y)d y }{\left(A_{i, j}\right)^{\frac{\alpha}{2}+1}}\\
&
        \qquad-\frac{\alpha (\alpha+2)C_\alpha}{4\varepsilon}\sum_{j \neq i} \frac{\gamma_j}{b_j} \int\!\!\!\!\!\!\!\!\!\; {}-{}\int_0^1\int_0^1\frac{\varepsilon^2  \tau B^2_{i,j}\sin( x- y)d y d\tau dt}{\left(A_{i, j}+t\tau\varepsilon B_{i, j}( x,  y)\right)^{\frac{\alpha}{2}+2}}+\varepsilon \mathcal{R}_{i, 3}(\varepsilon ,f,\lambda)  \\
        &
        = {\resizebox{.96\hsize}{!}{$-\frac{\alpha C_\alpha}{2\varepsilon }\displaystyle\sum_{j=1,\,j \neq i}^{N}\frac{\gamma_j}{b_j}\displaystyle\fint\frac{ 2\varepsilon(w_i-w_j)\cdot(b_i  (\cos (x), \sin (x))-b_j(\cos (y), \sin (y)))\sin( x- y)d y }{\left|w_i-w_j\right|^{\alpha+2}}+\varepsilon \mathcal{R}_{i, 3}(\varepsilon ,f,\lambda)$}}\\
        &
        =\frac{\alpha C_\alpha  }{2\pi } \displaystyle\sum_{j=1,\,j \neq i}^{N} \gamma_j \int_0^{2 \pi} \frac{\sin ( x- y)\left(w_i-w_j\right) \cdot(\cos  (y), \sin  (y))}{\left|w_i-w_j\right|^{\alpha+2}} d  y+\varepsilon \mathcal{R}_{i, 3}(\varepsilon ,f,\lambda) \\
        &
        = \frac{\alpha C_\alpha}{2}\displaystyle\sum_{j=1,\,j \neq i}^{N} \gamma_j \frac{\left(w_i-w_j\right) \cdot(\sin  (x),-\cos  (x))}{\left|w_i-w_j\right|^{\alpha+2}}+\varepsilon \mathcal{R}_{i,3}(\varepsilon ,f,\lambda) .
    \end{aligned}
\end{equation}
Throughout the computations, we used the fact that the sine function is odd, along with the identity $\pi = \int_0^{2\pi} \cos^2(x) \, dx = \int_0^{2\pi} \sin^2(x) \, dx$.
Here, $\mathcal{R}_{i,3}$ refers to a bounded function that is also continuous. Notice that for $z_i(x)=w_i+\varepsilon R_i(x)(\cos (x), \sin (x))\in  \partial \mathcal{D}_{i}^\varepsilon$ and $z_j(y)=w_j+\varepsilon R_j(y)(\cos (y), \sin (y))\in \partial \mathcal{D}_{j}^\varepsilon$, we have that
\begin{equation*}
|\varepsilon b_{i}R_{i}(x)(\cos(x),\sin(x))+w_i-\varepsilon b_{j}R_{j}(x)(\cos(x),\sin(x))-w_j| ,
\end{equation*}%
as a result, we conclude that $\mathcal{F}^\alpha_{i3}$ is less singular than $\mathcal{F}^\alpha_{i2}$. Moreover, it follows that $\mathcal{F}^\alpha_{i3}(\varepsilon, f, \lambda)$ remains continuous.
Thus, we conclude the proof of the continuity of the functional $\mathcal{F}^\alpha_{i}$.
\end{proof}

By combining \eqref{2-1}, \eqref{2-8}, and \eqref{333}, we obtain the following expressions
\begin{equation}\label{3-7}
    \begin{cases}
&\mathcal{F}^\alpha_{i, 1}=\Omega w_i\cdot (-\sin (x),\cos(x))   +U\left(-\sin (x),\cos(x)\right)+\varepsilon \mathcal{R}_{i, 1}(\varepsilon ,f,\lambda)\\
        &{\resizebox{.95\hsize}{!}{$\mathcal{F}^\alpha_{i,2}=C_\alpha\gamma _{i}\left(1-\frac{\alpha}{2}\right)\displaystyle\fint   \frac{f_i( x- y)\sin( y)d y}{\left(4\sin^2(\frac{ y}{2})\right)^{\frac{\alpha}{2}}}-C_\alpha \gamma_i\displaystyle\fint \frac{(f'_i( x)-f'_i( x- y))\cos( y)d y}{\left(4\sin^2(\frac{ y}{2})\right)^{\frac{\alpha}{2}}}+\varepsilon\mathcal{R}_{i,2}(\varepsilon ,f_i),$}} \\
       & \mathcal{F}^\alpha_{i,3}= -\displaystyle\sum_{j=1,\,j \neq i}^{N}  \dfrac{\alpha C_\alpha \gamma_j (w_i-w_j)}{2|w_i-      w_j|^{\alpha+2}}\cdot (-\sin (x), \cos (x))+\varepsilon \mathcal{R}_{i, 3}(\varepsilon ,f,\lambda),
    \end{cases}
\end{equation}%
where $\mathcal{R}_{i,k}$, for $k=1,\cdots, 3$, are bounded and smooth. \medskip

The subsequent proposition focuses on the $C^1$ smoothness of the
 functional $\mathcal{F}^\alpha.$
\begin{proposition}\label{lem2-3}
Let $\alpha\in(1,2)$ and let $\lambda^*$ solve \eqref{alg-sysP}.	There exists $\varepsilon_0>0$ and a small neighborhood $\Lambda$ of $\lambda^*$  such that  the Gateaux derivatives \( \partial_{f_i} \mathcal{F}^1_i(\varepsilon, f, \lambda) h_i : (-\varepsilon_0, \varepsilon_0) \times \mathcal{B}_X \times \Lambda \rightarrow \mathcal{B}_Y \) and \( \partial_{f_j} \mathcal{F}^1_i(\varepsilon, f, \lambda) h_i : (-\varepsilon_0, \varepsilon_0) \times \mathcal{B}_X \times \Lambda \rightarrow \mathcal{B}_Y \)  exist and are continuous. 
\end{proposition}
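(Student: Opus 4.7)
The plan is to mirror the decomposition strategy used in Proposition~\ref{p3-1}, splitting $\mathcal{F}^\alpha_i = \mathcal{F}^\alpha_{i,1}+\mathcal{F}^\alpha_{i,2}+\mathcal{F}^\alpha_{i,3}$ and computing the Gateaux derivatives termwise. Since $R_i = 1+\varepsilon|\varepsilon|^\alpha b_i^{1+\alpha}f_i$ depends linearly on $f_i$, the rigid-motion term $\mathcal{F}^\alpha_{i,1}$ is polynomial in $f_i$ with no $\varepsilon^{-1}$ singularity, so $\partial_{f_i}\mathcal{F}^\alpha_{i,1}[h_i]$ is explicit and continuous by inspection, and $\partial_{f_j}\mathcal{F}^\alpha_{i,1}\equiv 0$. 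Similarly, $\mathcal{F}^\alpha_{i,3}$ is bounded away from singularities because $|w_i-w_j|\geq c>0$ on $\Lambda$; differentiating in $f_i$ or $f_j$ produces integrands with smooth kernels $A_{i,j}+\varepsilon B_{i,j}$, and a Taylor expansion identical to \eqref{333} shows that the $\varepsilon^{-1}$ factor is absorbed by the vanishing of the leading-order integral, leaving a $C^1$ expression in $f$.

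The main work lies in the self-interaction $\mathcal{F}^\alpha_{i,2}$, which I would treat using the subdecomposition $\mathcal{F}^\alpha_{i,2}=\mathcal{F}^\alpha_{i21}+\mathcal{F}^\alpha_{i22}+\mathcal{F}^\alpha_{i23}+\mathcal{F}^\alpha_{i24}$ from \eqref{2-2}. For each piece I would apply the chain rule, noting that $\partial_{f_i}B[h_i]$ contains two types of contributions: (a) a factor $4(h_i(x)+h_i(y))\sin^2(\tfrac{x-y}{2})$, whose $\sin^2$ weight compensates the extra $A^{-1}$ obtained from differentiating the denominator, and (b) quadratic cross-terms carrying an explicit prefactor $\varepsilon|\varepsilon|^\alpha$, which are harmless after the $\varepsilon^{-1-\alpha}$ out front. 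Then, following the Taylor expansion \eqref{2-6}--\eqref{2-8}, I would split the derivative into its $\varepsilon=0$ value plus a remainder of order $\varepsilon$:
\begin{equation*}
\partial_{f_i}\mathcal{F}^\alpha_{i,2}[h_i] = C_\alpha\gamma_i\Bigl(1-\tfrac{\alpha}{2}\Bigr)\!\fint\frac{h_i(x-y)\sin(y)\,dy}{|\sin(\tfrac{y}{2})|^\alpha} - C_\alpha\gamma_i\!\fint\frac{(h_i'(x)-h_i'(x-y))\cos(y)\,dy}{|\sin(\tfrac{y}{2})|^\alpha} + \varepsilon\,\mathcal{S}_i(\varepsilon,f_i)[h_i],
\end{equation*}
where the leading operator is the linearization from \eqref{3-7} and $\mathcal{S}_i$ is jointly continuous in $(\varepsilon,f_i)$ with values in bounded linear maps $X^{k+\alpha-1}\to Y^k$.

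Existence of the Gateaux derivative will follow from the standard difference-quotient argument applied to each piece, using the identity
\begin{equation*}
\frac{1}{(A+\varepsilon|\varepsilon|^\alpha B(f_i+sh_i))^{\alpha/2}} - \frac{1}{(A+\varepsilon|\varepsilon|^\alpha B(f_i))^{\alpha/2}} = -\frac{\alpha}{2}\int_0^1\frac{\varepsilon|\varepsilon|^\alpha(B(f_i+sh_i)-B(f_i))}{(A+\varepsilon|\varepsilon|^\alpha B(f_i+\tau s h_i))^{\alpha/2+1}}\,d\tau
\end{equation*}
and passing to the limit $s\to 0$ under the integral via dominated convergence, with the bound provided by the $X^{k+\alpha-1}$ seminorm. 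Continuity of $f\mapsto\partial_{f_i}\mathcal{F}^\alpha_{i,2}[\,\cdot\,]$ as a map into $\mathcal{L}(\mathcal{X},\mathcal{Y})$ is then obtained by the same mean-value-theorem trick as in \eqref{2-7}, comparing two values $f_{i1},f_{i2}$ and using that the singular denominator $D_\alpha(\delta f_{i1}+(1-\delta)f_{i2})$ behaves like $\sin^2(\tfrac{x-y}{2})$ uniformly in $\delta\in(0,1)$.

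The hard part will be the $k-1$st derivative of $\partial_{f_i}\mathcal{F}^\alpha_{i22}[h_i]$ in $x$, where differentiating the denominator brings down an extra $\sin^{-\alpha-2}(\tfrac{x-y}{2})$ that must be matched against the increment $\partial^k h_i(x)-\partial^k h_i(y)$ in the numerator; this is precisely where the $X^{k+\alpha-1}$ seminorm is needed, and the estimate $\|\partial^{k-1}\partial_{f_i}\mathcal{F}^\alpha_{i22}[h_i]\|_{L^2}\leq C\|h_i\|_{X^{k+\alpha-1}}$ follows from Hölder's inequality together with the Hardy-type bound embedded in the definition of $X^{k+\alpha-1}$. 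The cross derivative $\partial_{f_j}\mathcal{F}^\alpha_i[h_j]$ only enters through $\mathcal{F}^\alpha_{i,3}$ and is handled analogously to the treatment of $\mathcal{F}^\alpha_{i,3}$ in Proposition~\ref{p3-1}, but is strictly easier due to the separation of patches.
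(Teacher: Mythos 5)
Your proposal follows essentially the same route as the paper: the same termwise decomposition $\mathcal{F}^\alpha_{i,1}+\mathcal{F}^\alpha_{i,2}+\mathcal{F}^\alpha_{i,3}$ with the subdecomposition \eqref{2-2} of the self-interaction, explicit Gateaux derivatives for each piece, a difference-quotient argument concentrated on the most singular term $\mathcal{F}^\alpha_{i22}$ with the mean-value-theorem estimate of \eqref{2-7}, the $X^{k+\alpha-1}$-based bound on the $(k-1)$st derivative, and the observation that $\mathcal{F}^\alpha_{i,1}$, $\mathcal{F}^\alpha_{i,3}$ and the cross derivatives in $f_j$ contribute only $O(\varepsilon)$ continuous remainders. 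The only cosmetic difference is that you pass to the limit in the difference quotient by dominated convergence, whereas the paper derives a quantitative bound of order $t\|f_i\|_{X^{k+\alpha-1}}$ and lets $t\to 0$; both are sound.
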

\begin{proof}
We claim $\partial_{ f_i} \mathcal{F}^\alpha_{i,2}(\varepsilon, f_i)h_i=\partial_{ f_i} \mathcal{F}^\alpha_{i11}+\partial_{ f_i} \mathcal{F}^\alpha_{i22}+\partial_{ f_i} \mathcal{F}^\alpha_{i23}+\partial_{ f_i}\mathcal{F}^\alpha_{i24}$ is continuous, where
	\begin{equation}\label{2-11}
		\begin{split}		
    			&{\resizebox{.99\hsize}{!}{$\partial_{ f_i} \mathcal{F}^\alpha_{i21}=C_\alpha\gamma _{i}\displaystyle\fint \frac{h_i( y)\sin( x- y)d y}{\left( |\varepsilon|^{2+2\alpha}b_i^{2+2\alpha}\left(f_i( x)-f_i( y)\right)^2+4(1+\varepsilon|\varepsilon|^\alpha b_i^{1+\alpha} f_i( x))(1+\varepsilon|\varepsilon|^\alpha b_i^{1+\alpha} f_i( y))\sin^2\left(\frac{ x- y}{2}\right)\right)^{\frac{\alpha}{2}}}$}}\\
			& \ \ \ \ {\resizebox{.9\hsize}{!}{$-\frac{\alpha C_\alpha \gamma_i}{2}\displaystyle \fint \frac{(1+\varepsilon|\varepsilon|^\alpha b_i^{1+\alpha}  f_i( y))\sin( x- y)}{\left( |\varepsilon|^{2+2\alpha}b_i^{2+2\alpha}\left(f_i( x)-f_i( y)\right)^2+4(1+\varepsilon|\varepsilon|^\alpha b_i^{1+\alpha} f_i( x))(1+\varepsilon|\varepsilon|^\alpha b_i^{1+\alpha}  f_i( y))\sin^2\left(\frac{ x- y}{2}\right)\right)^{\frac{\alpha+2}{2}}}$}}\\
			& \ \ \ \ \times\bigg(2\varepsilon|\varepsilon|^\alpha b_i^{1+\alpha}(f_i( x)-f_i( y))(h_i( x)-h_i( y))\\
			& \ \ \ \ \ \ \ \ +4(h_i( x)(1+\varepsilon|\varepsilon|^\alpha b_i^{1+\alpha}  f_i( y))+h_i( y)(1+\varepsilon|\varepsilon|^\alpha b_i^{1+\alpha} f_i( x)))\sin^2\left(\frac{ x- y}{2}\right)\bigg)d y,
		\end{split}
	\end{equation}
    \begin{equation}\label{2-12}
    	\begin{split}
    		&\partial_{ f_i} \mathcal{F}^\alpha_{i22}=C_\alpha\gamma_i{\resizebox{.85\hsize}{!}{$\displaystyle\fint \frac{(h'_i( y)-h'_i( x))\cos( x- y)d y}{\left( |\varepsilon|^{2+2\alpha}\left(f_i( x)-f_i( y)\right)^2+4(1+\varepsilon|\varepsilon|^\alpha b_i^{1+\alpha} f_i( x))(1+\varepsilon|\varepsilon|^\alpha b_i^{1+\alpha} f_i( y))\sin^2\left(\frac{ x- y}{2}\right)\right)^{\frac{\alpha}{2}}}$}}\\
    		&  \ \ \ {\resizebox{.99\hsize}{!}{$-\frac{\alpha C_\alpha\gamma_i\varepsilon|\varepsilon|^\alpha b_i^{1+\alpha}}{2}\displaystyle \fint \frac{(g'_i( y)-g'_i( x))\cos( x- y)}{\left( |\varepsilon|^{2+2\alpha}\left(f_i( x)-f_i( y)\right)^2+4(1+\varepsilon|\varepsilon|^\alpha b_i^{1+\alpha} f_i( x))(1+\varepsilon|\varepsilon|^\alpha b_i^{1+\alpha} f_i( y))\sin^2\left(\frac{ x- y}{2}\right)\right)^{\frac{\alpha+2}{2}}}$}}\\
    			&  \ \ \ \times\bigg(\varepsilon|\varepsilon|^\alpha b_i^{1+\alpha}(2(f_i( x)-f_i( y))(h_i( x)-h_i( y))\\
			& \ \ \ \ \ \ \ \ +4(h_i( x)(1+\varepsilon|\varepsilon|^\alpha b_i^{1+\alpha}  f_i( y))+h_i( y)(1+\varepsilon|\varepsilon|^\alpha b_i^{1+\alpha} f_i( x)))\sin^2\left(\frac{ x- y}{2}\right)\bigg)d y,
    	\end{split}
    \end{equation}
    \begin{equation}\label{2-13}
    	\begin{split}
    		&\partial_{ f_i} \mathcal{F}^\alpha_{i23}={\resizebox{.9\hsize}{!}{$\frac{C_\alpha \gamma_i \varepsilon|\varepsilon|^\alpha b_i^{1+\alpha} h'_i( x)}{1+\varepsilon|\varepsilon|^\alpha b_i^{1+\alpha} f_i( x)}\displaystyle\fint \frac{(f_i( x)-f_i( y))\cos( x- y)d y}{\left( |\varepsilon|^{2+2\alpha}b_i^{2+2\alpha}\left(f_i( x)-f_i( y)\right)^2+4(1+\varepsilon|\varepsilon|^\alpha b_i^{1+\alpha} f_i( x))(1+\varepsilon|\varepsilon|^\alpha b_i^{1+\alpha} f_i( y))\sin^2\left(\frac{ x- y}{2}\right)\right)^{\frac{\alpha}{2}}}$}}\\
    		&
            \ \ \ \ +{\resizebox{.93\hsize}{!}{$\frac{C_\alpha  \gamma_i\varepsilon|\varepsilon|^\alpha b_i^{1+\alpha} 
            f'_i( x)}{1+\varepsilon|\varepsilon|^\alpha b_i^{1+\alpha} f_i( x)}\displaystyle\fint\frac{(h_i( x)-h_i( y))\cos( x- y)d y}{\left( |\varepsilon|^{2+2\alpha}b_i^{2+2\alpha}\left(f_i( x)-f_i( y)\right)^2+4(1+\varepsilon|\varepsilon|^\alpha b_i^{1+\alpha} f_i( x))(1+\varepsilon|\varepsilon|^\alpha b_i^{1+\alpha} f_i( y))\sin^2\left(\frac{ x- y}{2}\right)\right)^{\frac{\alpha}{2}}}$}}\\
            &
            \ \ \ \ -{\resizebox{.9\hsize}{!}{$\frac{C_\alpha  \gamma_i|\varepsilon|^{2+2\alpha}b_i^{2+2\alpha} f'_i( x)h_i( x)}{(1+\varepsilon|\varepsilon|^\alpha b_i^{1+\alpha}
            f_i( x))^2}\displaystyle\fint \frac{(f_i( x)-f_i( y))\cos( x- y)d y}{\left( |\varepsilon|^{2+2\alpha}b_i^{2+2\alpha}\left(f_i( x)-f_i( y)\right)^2+4(1+\varepsilon|\varepsilon|^\alpha b_i^{1+\alpha} f_i( x))(1+\varepsilon|\varepsilon|^\alpha b_i^{1+\alpha} f_i( y))\sin^2\left(\frac{ x- y}{2}\right)\right)^{\frac{\alpha}{2}}}$}}\\
    		&
            \ \ \ \ -{\resizebox{.9\hsize}{!}{$\frac{\alpha C_\alpha \gamma_i |\varepsilon|^{2+2\alpha}b_i^{2+2\alpha} f'_i( x)}{2(1+\varepsilon|\varepsilon|^\alpha b_i^{1+\alpha} f_i( x))}\displaystyle\fint \frac{(f_i( x)-f_i( y))\cos( x- y)}{\left( |\varepsilon|^{2+2\alpha}b_i^{2+2\alpha}\left(f_i( x)-f_i( y)\right)^2+4(1+\varepsilon|\varepsilon|^\alpha b_i^{1+\alpha} f_i( x))(1+\varepsilon|\varepsilon|^\alpha b_i^{1+\alpha} f_i( y))\sin^2\left(\frac{ x- y}{2}\right)\right)^{\frac{\alpha+2}{2}}}$}}\\
    		&
            \ \ \ \ \times\bigg(\varepsilon|\varepsilon|^\alpha b_i^{1+\alpha}(2(f_i( x)-f_i( y))(h_i( x)-h_i( y))\\
			& \ \ \ \ \ \ \ \ +4(h_i( x)(1+\varepsilon|\varepsilon|^\alpha b_i^{1+\alpha}  f_i( y))+h_i( y)(1+\varepsilon|\varepsilon|^\alpha b_i^{1+\alpha} f_i( x)))\sin^2\left(\frac{ x- y}{2}\right)\bigg)d y
    	\end{split}
    \end{equation}
    and
    \begin{equation}\label{2-14}
    	\begin{split}
    		&\partial_{ f_i} \mathcal{F}^\alpha_{i24}={\resizebox{.9\hsize}{!}{$\frac{C_\alpha \gamma_i |\varepsilon|^{1+\alpha}b_i^{1+\alpha}}{1+\varepsilon|\varepsilon|^\alpha b_i^{1+\alpha} f_i( x)}\displaystyle\fint \frac{(h'_i( y)f'_i( x)+h'_i( x)f'_i( y))\sin( x- y)d y}{\left( |\varepsilon|^{2+2\alpha}b_i^{2+2\alpha} \left(f_i( x)-f_i( y)\right)^2+4(1+\varepsilon|\varepsilon|^\alpha b_i^{1+\alpha} f_i( x))(1+\varepsilon|\varepsilon|^\alpha b_i^{1+\alpha} f_i( y))\sin^2\left(\frac{ x- y}{2}\right)\right)^{\frac{\alpha}{2}}}$}}\\
    		& \ \ \ \ -{\resizebox{.93\hsize}{!}{$\frac{\alpha C_\alpha \gamma_i|\varepsilon|^{2+2\alpha}b_i^{2+2\alpha}}{2(1+\varepsilon|\varepsilon|^\alpha b_i^{1+\alpha} f_i( x))}\displaystyle\fint  \frac{f'_i( x)f'_i( y)\sin( x- y)}{\left( |\varepsilon|^{2+2\alpha}b_i^{2+2\alpha} \left(f_i( x)-f_i( y)\right)^2+4(1+\varepsilon|\varepsilon|^\alpha b_i^{1+\alpha} f_i( x))(1+\varepsilon|\varepsilon|^\alpha b_i^{1+\alpha} f_i( y))\sin^2\left(\frac{ x- y}{2}\right)\right)^{\frac{\alpha+2}{2}}}$}}\\
    		& \ \ \ \ \times\bigg(\varepsilon|\varepsilon|^\alpha b_i^{1+\alpha}(2(f_i( x)-f_i( y))(h_i( x)-h_i( y))\\
			& \ \ \ \ \ \ \ \ +4(h_i( x)(1+\varepsilon|\varepsilon|^\alpha b_i^{1+\alpha}  f_i( y))+h_i( y)(1+\varepsilon|\varepsilon|^\alpha b_i^{1+\alpha} f_i( x)))\sin^2\left(\frac{ x- y}{2}\right)\bigg)d y.\\
			&\ \ \ \ \ \ \ \ \ {\resizebox{.93\hsize}{!}{$-\frac{C_{\alpha}\gamma_i|\varepsilon|^{1+\alpha}b_i^{1+\alpha}f'_{i}( x)}{(1+\varepsilon|\varepsilon|^{\alpha}b_i^{1+\alpha}f_{i}( x))^{2}}\displaystyle\fint  \frac{f'_{i}( y)h_i( x)\sin{( x- y)}}{\left( |\varepsilon|^{2+2\alpha}b_i^{2+2\alpha} \left(f_i( x)-f_i( y)\right)^2+4(1+\varepsilon|\varepsilon|^\alpha b_i^{1+\alpha} f_i( x))(1+\varepsilon|\varepsilon|^\alpha b_i^{1+\alpha} f_i( y))\sin^2\left(\frac{ x- y}{2}\right)\right)^{\frac{\alpha}{2}}}\, d y.$}}
    	\end{split}
    \end{equation}
    \begin{equation}\label{partial_i2}
  \partial_{ f_i} \mathcal{F}^\alpha_{i,1}(\varepsilon ,f,\lambda) = O(\varepsilon) .
\end{equation}
\begin{equation}\label{partial_i3}
  \partial_{ f_i}  \mathcal{F}^\alpha_{i,3}(\varepsilon ,f,\lambda) = O(\varepsilon) .
\end{equation}
and
\begin{equation}\label{partial_j2}
  \partial_{ f_j} \mathcal{F}^\alpha_{i}(\varepsilon ,f,\lambda) = O(\varepsilon) .
\end{equation}
The first step is to demonstrate
    \begin{equation*}
        \lim\limits_{t\to0}\left\|\frac{\mathcal{F}^\alpha_{i2l}(\varepsilon, f_i+th_i)-\mathcal{F}^\alpha_{i2l}(\varepsilon, f_i)}{t}-\partial_{f_i}\mathcal{F}^\alpha_{i2l}(\varepsilon, g,h_i)\right\|_{Y^{k-1}}\to 0
    \end{equation*}
for $l = 1, \dots, 4$, we will focus on the most singular case, namely when $l = 2$, and adopt the notation from Proposition \ref{2-2}. In this scenario, we have the following result
\begin{equation*}
		\begin{split}
			&\frac{\mathcal{F}^\alpha_{i22}(\varepsilon, f_i+th_i)-\mathcal{F}^\alpha_{i22}(\varepsilon, f_i)}{t}-\partial_{f_i}\mathcal{F}^\alpha_{i22}(\varepsilon, f_i,h_i)\\
			&=\frac{1}{t}\int\!\!\!\!\!\!\!\!\!\; {}-{}{\resizebox{.9\hsize}{!}{$(f'_i( x)-f'_i( y))\cos( x- y)\bigg(\frac{1}{D_\alpha(f_i+th_i)^{\alpha/2}}-\frac{1}{D_\alpha(f_i)}+t\frac{\Delta f_i\Delta h_i+2(h_i\tilde{R}_i+\tilde{h}_iR_i)\sin^2(\frac{ x- y}{2})}{D_\alpha(f_i)}\bigg)d y$}}\\
			&\ \ \ \ +\int\!\!\!\!\!\!\!\!\!\; {}-{}(h'_i( x)-h'_i( y))\cos( x- y)\bigg(\frac{1}{D_\alpha(f_i+th_i)}-\frac{1}{D_\alpha(f_i)}\bigg)d y\\
			&=\mathcal{G}^\alpha_{i21}+\mathcal{G}^\alpha_{i22}.
		\end{split}
	\end{equation*}
By taking $\partial^{k-1}$ derivatives of $\mathcal{G}^\alpha_{i21}$, we can infer the following
    \begin{equation*}
    	\begin{split}
    	  \partial^{k-1}\mathcal{F}^\alpha_{i21}&=\frac{1}{t}\int\!\!\!\!\!\!\!\!\!\; {}-{}
            \bigg(\frac{1}  {D_\alpha(f_i+th_i)^{\alpha/2}}-\frac{1}{D_\alpha(f_i)^{\alpha/2}}+t\frac{\Delta f_i\Delta     h+2(\tilde{R}_i h_i+\tilde{h}_i R_i)\sin^2(\frac{ x- y}{2})}{D_\alpha(f_i)^{\alpha/2}}\bigg)\\
    	  & \ \ \ \ \times(\partial^kf_i( x)-\partial^kf_i( y))\cos( x- y)d y+l.o.t.
    	\end{split}
    \end{equation*}
Using the mean value theorem, we derive the following result
    \begin{equation*}
    	\frac{1}{D_\alpha(f_i+th_i)^{\alpha/2}}-\frac{1}{D_\alpha(f_i)^{\alpha/2}}+t\frac{\Delta f_i\Delta h_i+2(\tilde{R}_i h_i+\tilde{h}_i R_i)\sin^2(\frac{ x- y}{2})}{D_\alpha(f_i)^{\alpha/2}}\sim \frac{Ct^2}{|\sin(\frac{ x- y}{2})|^{\alpha/2}}\varphi(\varepsilon,f_i,h_i),
    \end{equation*}
    with $\|\varphi(\varepsilon,f_i,h_i)\|_{L^\infty}<\infty$. Hence, we can conclude that
    \begin{equation*}
    	\|\mathcal{F}^\alpha_{i21}\|_{Y^{k-1}}\le Ct\left\|\int\!\!\!\!\!\!\!\!\!\; {}-{}\frac{\partial^kf_i( x)-\partial^kf_i( y)}{|\sin(\frac{ x- y}{2})|^{\alpha/2}}d y\right\|_{L^2}\le Ct\|f_i\|_{X^{k+\alpha-1}}.
    \end{equation*}
   Following the same reasoning as in \eqref{2-7}, we can similarly show that $\norm{\mathcal{G}^\alpha_{i22}}_{Y^{k-1}}$ is bounded by $Ct\norm{f_i}_{X^{k+\alpha-1}}$. Therefore, by letting $t \rightarrow 0$, we conclude the first step. The second step consists of proving the continuity of $\partial_{f_i} \mathcal{F}^\alpha_{i22}(\varepsilon, f)$, which also relies on \eqref{2-7}.

By applying the same approach as described earlier, we infer that
    \begin{equation}\label{2-15}
    	\partial_{f_i} \mathcal{F}^\alpha_{i,1}(\varepsilon ,f,\lambda)h_i=|\varepsilon|\partial_{f_i}\mathcal{R}_{i,2}(\varepsilon ,f,\lambda) ,
    \end{equation}
          \begin{equation}\label{deriv_fi3}
    	\partial_{f_i}\mathcal{F}^\alpha_{i,3}(\varepsilon ,f,\lambda)h_i=|\varepsilon|\partial_{f_i}\mathcal{R}_{i,3}(\varepsilon ,f,\lambda)
    \end{equation}
    and
       \begin{equation}\label{deriv_fj}
    	\partial_{f_j}\mathcal{F}^\alpha_{i}(\varepsilon ,f,\lambda)h_j=|\varepsilon|\partial_{f_i}\mathcal{R}_{j}(\varepsilon ,f,\lambda) ,
    \end{equation}
 these functions are continuous, where $\mathcal{R}_{i,2}(\varepsilon ,f,\lambda)$, $\mathcal{R}_{i,3}(\varepsilon ,f,\lambda)$, and $\mathcal{R}_{j}(\varepsilon ,f,\lambda)$ are bounded and $C^1$ functions. This completes the proof of Proposition \ref{lem2-3}.

\end{proof}
From \eqref{2-11}-\eqref{2-14}, by setting $\varepsilon = 0$ and $f_i \equiv 0$, we obtain
\begin{equation}\label{gateaux}
	\partial_{f_i} \mathcal{F}^\alpha_{i}(0,0,\lambda)h_i=C_\alpha\gamma_i\left(1-\frac{\alpha}{2}\right)\int\!\!\!\!\!\!\!\!\!\; {}-{} \frac{h_i( x- y)\sin( y)d y}{\left(4\sin^2(\frac{ y}{2})\right)^{\frac{\alpha}{2}}}-C_\alpha \gamma_i\int\!\!\!\!\!\!\!\!\!\; {}-{} \frac{(h'_i( x)-h'_i( x- y))\cos( y)d y}{\left(4\sin^2(\frac{ y}{2})\right)^{\frac{\alpha}{2}}} .
\end{equation}
and
\begin{equation}\label{gateaux2}
	\partial_{f_j} \mathcal{F}^\alpha_{i}(0,0,\lambda)h_j=0 .
\end{equation}
  Having established the $C^1$ regularity of the functional.  We then define the following nonlinear operator 
\begin{equation*}
\mathcal{F}^\alpha(\varepsilon,f,\lambda):=\big(\mathcal{F}^\alpha_1(\varepsilon,f,\lambda),\ldots,\mathcal{F}^\alpha_N(\varepsilon,f,\lambda)\big).\end{equation*}
Note that identify roots of the nonlinear functional $\mathcal{F}^\alpha=0$ are equivalent to identifying roots $\lambda$ of   the $\mathcal{P}_i^\alpha(\lambda)=0$, defined in \eqref{alg-sysP}, here $\lambda$ denotes the point vortex parameters. As a result, we can produce a collection of trivial solutions for \eqref{1-1} with $\alpha \in (1,2)$.

\begin{proposition}\label{equivalence}
  The equation $ \mathcal{F}^\alpha(0, 0, \lambda)=0$ is equivalent to
  \[\mathcal{P}_i^\alpha(\lambda) = \Omega w_i + U - \frac{\widehat{C}_\alpha}{2} \sum_{\substack{j=1,\, j \neq i}}^{N} \gamma_j \frac{w_i - w_j}{|w_i - w_j|^{\alpha + 2}} = 0,\]
  where $\widehat{C}_\alpha$ is defined in \eqref{eqn:kalpha2}.
\end{proposition}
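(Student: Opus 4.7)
The plan is to read off the value of $\mathcal{F}^\alpha(0,0,\lambda)$ directly from the asymptotic decomposition \eqref{3-7} and observe that it factors through the vector $(-\sin x, \cos x)$, after which linear independence of $\sin$ and $\cos$ will give the stated equivalence. Since the heavy lifting (the Taylor expansions of $\mathcal{F}^\alpha_{i,1}, \mathcal{F}^\alpha_{i,2}, \mathcal{F}^\alpha_{i,3}$ in $\varepsilon$) has already been done in Proposition~\ref{p3-1}, I expect the argument to be short and essentially a verification.

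First I would set $\varepsilon=0$ in \eqref{3-7}, killing each remainder of the form $\varepsilon \mathcal{R}_{i,k}(\varepsilon,f,\lambda)$. Next I would set $f=0$: this annihilates $\mathcal{F}^\alpha_{i,2}(0,0,\lambda)$ entirely, because every surviving term in its limiting expression is a linear integral operator acting on $f_i$. What remains is
\begin{equation*}
\mathcal{F}^\alpha_i(0,0,\lambda)
= \Omega\, w_i\cdot(-\sin x,\cos x)+U\cdot(-\sin x,\cos x)
-\sum_{j\neq i}\frac{\alpha C_\alpha \gamma_j (w_i-w_j)}{2|w_i-w_j|^{\alpha+2}}\cdot(-\sin x,\cos x).
\end{equation*}
Factoring the common vector on the right and using the identity $\widehat{C}_\alpha=\alpha C_\alpha$ from \eqref{eqn:kalpha2}, this rewrites as
\begin{equation*}
\mathcal{F}^\alpha_i(0,0,\lambda)
= \mathcal{P}_i^\alpha(\lambda)\cdot(-\sin x,\cos x).
\end{equation*}

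Finally I would conclude by noting that if $\mathcal{P}_i^\alpha(\lambda)=(p_{i1},p_{i2})$, then $\mathcal{F}^\alpha_i(0,0,\lambda)(x)=-p_{i1}\sin x + p_{i2}\cos x$, which vanishes identically on $[0,2\pi)$ if and only if $p_{i1}=p_{i2}=0$ by the linear independence of $\{\sin x,\cos x\}$ in $L^2(\mathbb{T})$. Doing this for every $i=1,\dots,N$ yields the claimed equivalence between $\mathcal{F}^\alpha(0,0,\lambda)=0$ and the algebraic system \eqref{alg-sysP}. There is no real obstacle here; the only point requiring minor care is bookkeeping the constant $\widehat{C}_\alpha = \alpha C_\alpha$ so that the factor $\alpha/2$ coming out of Taylor's formula in \eqref{333} combines correctly into $\widehat{C}_\alpha/2$ matching the point-vortex system \eqref{ode-sys0}.
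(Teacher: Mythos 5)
Your proposal is correct and follows essentially the same route as the paper, which simply cites the expansion \eqref{3-7} together with the definition of $\mathcal{P}_i^\alpha$ in \eqref{alg-sysP}; your write-up merely makes explicit the factorization $\mathcal{F}^\alpha_i(0,0,\lambda)=\mathcal{P}_i^\alpha(\lambda)\cdot(-\sin x,\cos x)$ and the linear-independence step, including the correct bookkeeping $\widehat{C}_\alpha=\alpha C_\alpha$.
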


\begin{proof}
This result follows directly from the expansions of the functional in \eqref{3-7}, together with the definition of the function $\mathcal{P}_i^\alpha(\lambda)$ provided in \eqref{alg-sysP}.
\end{proof}

In this section, we concentrate on the linearization of the functional introduced in Section \ref{section2}. From Proposition \ref{equivalence}, it is known that $(\varepsilon,f,\lambda)$ is a solution to the system $\mathcal{F}^\alpha(\varepsilon,f,\lambda):=\big(\mathcal{F}^\alpha_1(\varepsilon,f,\lambda),\ldots,\mathcal{F}^\alpha_N(\varepsilon,f,\lambda)\big)=0$ if and only if $\lambda$ is a root of $\mathcal{P}_i^\alpha(\lambda)$. We now aim to analyze the linearization of $\mathcal{F}^\alpha$ at $(0,0,\lambda)$.

By taking \(\varepsilon = 0\) and \(f_i \equiv 0\) for all \(i = 1, \ldots, N\) in \eqref{3-7}, the Gateaux derivatives can be expressed as follows
%%%%%%%%%%%%%%%%%%%%%%%%%%%%%%%%%%%%%%%%%%%%%%%%%%%%%%%%%%%%%%%%%%%
\begin{equation}\label{4-1}
    \left\{
    \begin{aligned}
        &\partial_{f_i} \mathcal{F}^\alpha_i(0, 0, \lambda) h_i = C_\alpha\gamma_i\left(1-\dfrac{\alpha}{2}\right) \displaystyle\int\!\!\!\!\!\!\!\!\!\; {}-{}\frac{h_i( x-  y)\sin( y)d y}{\left(4\sin^2(\frac{ y}{2})\right)^{\frac{\alpha}{2}}}-C_\alpha \gamma_i\displaystyle\int\!\!\!\!\!\!\!\!\!\; {}-{} \frac{(h'_i( x)-h'_i( x- y))\cos( y)d y}{\left(4\sin^2(\frac{ y}{2})\right)^{\frac{\alpha}{2}}} \\
        &
\partial_{f_j}\mathcal{F}^\alpha_i(0, 0, \lambda) h_j = 0, \,\,\, j \not= i.
    \end{aligned}
    \right.
\end{equation}

\begin{proposition}\label{iso}
    Let $\alpha\in(1,2)$ and $h=(h_1,\ldots,h_N)\in \mathcal{X}^{k+\alpha-1} $, where 
    \[    h_i( x)=\sum_{n=2}^{\infty}\left(a^i_n \cos (n  x)+d^i_n \sin (n  x)\right).\]
    Then the following holds
    \begin{equation*}
        \begin{aligned}
            &\partial_{f_i}  \mathcal{F}^\alpha_i(0, 0, \lambda) h_i = -\sum_{n=2}^{\infty}\gamma_i n \sigma_n  \left(a^i_n \sin (n  x)-d^i_n \cos (n  x)\right), \\
            &\partial_{f_j}  \mathcal{F}^\alpha_i(0, 0, \lambda) h_j = 0, \quad j \neq i,
        \end{aligned}
    \end{equation*}
    where
    \begin{equation}\label{sigma}
        \sigma_n=2^{\alpha-1}\dfrac{\Gamma(1-\alpha)}{(\Gamma(1-\frac{\alpha}{2}))^2} \left(\dfrac{\Gamma(1+\frac{\alpha}{2})}{\Gamma(2-\frac{\alpha}{2})}-\dfrac{\Gamma(n+\frac{\alpha}{2})}{\Gamma(1+n-\frac{\alpha}{2})}\right), \quad n \geq 2 .
    \end{equation}
 Furthermore,  the operator $\partial_{f_{i}} \mathcal{F}^\alpha_i(0, 0, \lambda):X^{k+\alpha-1}\rightarrow Y^k_0$ is an isomorphism. Moreover,  the Gateaux derivative of $\mathcal{F}^\alpha$ with respect to $f$ at $(0,0,\lambda)$ is given by
      \begin{align}\label{eq:linearization}
    D_f \mathcal{F}^\alpha(0, 0, \lambda)h(x)=&\sum_{n=2}^{\infty}n \sigma_n \begin{pmatrix}
      \gamma_1   \left(a^1_n \sin (n  x)-d^1_n \cos (n  x)\right) \\
      \vdots
      \\
       \gamma_N   \left(a^N_n \sin (n  x)-d^N_n \cos (n  x)\right)
    \end{pmatrix} .
  \end{align}
 Additionally, for any $\lambda \in \Lambda$, the linear operator 
  $D_{f}\mathcal{F}^\alpha (0,0,\lambda)\colon \mathcal{X}^{k+\alpha-1}\rightarrow \mathcal{Y}^k_0$ is also an isomorphism.
\end{proposition}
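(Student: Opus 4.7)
The starting point is formula \eqref{4-1}, which already isolates the linearized operator as a sum of two convolutions on $\mathbb{T}$. My plan is to (i) compute the Fourier multiplier of $\partial_{f_i}\mathcal{F}^\alpha_i(0,0,\lambda)$ explicitly and verify it equals $-\gamma_i\,n\,\sigma_n$ with $\sigma_n$ as in \eqref{sigma}, and (ii) deduce the isomorphism from the mapping properties of this multiplier.

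For step (i), since the operator is translation-invariant it diagonalizes in the Fourier basis; I test it on $h_i(x)=e^{inx}$ and decompose into the real basis at the end. The first integrand in \eqref{4-1} reduces to $e^{inx}\fint e^{-iny}\sin y\,|2\sin(y/2)|^{-\alpha}\,dy$ and the second to $in\,e^{inx}\fint(1-e^{-iny})\cos y\,|2\sin(y/2)|^{-\alpha}\,dy$; both are absolutely convergent because the factors $\sin y$ and $1-e^{-iny}$ cancel the singularity at $y=0$. The key analytical input is the classical Fourier expansion
\[
|2\sin(y/2)|^{-\alpha}
=\frac{\Gamma(1-\alpha)}{\Gamma(1-\alpha/2)^2}
\sum_{m\in\mathbb{Z}}\frac{\Gamma(|m|+\alpha/2)}{\Gamma(|m|+1-\alpha/2)}\,e^{imy},
\]
valid for $\alpha\in(0,1)$ and extended to $\alpha\in(1,2)$ by analytic continuation. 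Writing $\sin y, \cos y$ as combinations of $e^{\pm iy}$, the $n$-th Fourier coefficient of each integrand becomes a difference of gamma-ratios at indices $n\pm 1$; the recursion $\Gamma(z+1)=z\Gamma(z)$ telescopes these and reorganizes them into the single expression $\Gamma(n+\alpha/2)/\Gamma(1+n-\alpha/2)-\Gamma(1+\alpha/2)/\Gamma(2-\alpha/2)$, producing exactly $-\gamma_i\,n\,\sigma_n$. Converting back to the $(\cos nx,\sin nx)$ basis gives the sign pattern in the statement. The off-diagonal claim $\partial_{f_j}\mathcal{F}^\alpha_i(0,0,\lambda)=0$ for $j\neq i$ is immediate because $f_j$ enters $\mathcal{F}^\alpha_i$ only through $\mathcal{F}^\alpha_{i,3}$, whose leading expansion in \eqref{3-7} at $\varepsilon=0$ is independent of the boundary profile.

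For step (ii), three observations close the argument. First, $\sigma_1=0$ exactly because the two gamma-ratios in the bracket coincide at $n=1$, which is why $Y^k_0$ is taken modulo $\operatorname{span}\{\sin x,\cos x\}$ and $X^{k+\alpha-1}$ contains only modes $n\geq 2$. Second, $\sigma_n\neq 0$ for every $n\geq 2$: the sequence $n\mapsto \Gamma(n+\alpha/2)/\Gamma(1+n-\alpha/2)$ is strictly increasing in $n$ (from the recursion, since $\alpha/2>0$), so the bracket is strictly negative; combined with $\Gamma(1-\alpha)<0$ on $\alpha\in(1,2)$ and $\Gamma(1-\alpha/2)^2>0$, this gives $\sigma_n>0$. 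Third, Stirling yields $n\sigma_n\asymp n^\alpha$ uniformly for $n\geq 2$, and the Fourier-side characterizations of $X^{k+\alpha-1}$ and $Y^k_0$ (the seminorm defining $X^{k+\alpha-1}$ encoding exactly the $n^{\alpha-1}$ weight on the $k$-th derivative) show that multiplication by $n\sigma_n$ realizes a bounded bijection between these spaces, with inverse given by division by $n\sigma_n$. The product statement is then block-diagonal.

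The main technical difficulty is the distributional handling of the kernel $|2\sin(y/2)|^{-\alpha}$ for $\alpha\in(1,2)$, where the Fourier series above must be interpreted via analytic continuation from $\alpha\in(0,1)$ or by finite-part regularization. The cancellations built into \eqref{4-1} are precisely what render the resulting integrals absolutely convergent, and one has to check that the formal gamma-function computations agree with this renormalized picture. This is standard in the gSQG bifurcation framework of \cite{Cas1,HH15}, whose template I would follow.
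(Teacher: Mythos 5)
Your proposal is correct in substance and reaches the same multiplier and the same isomorphism conclusion, but the computation of the multiplier follows a different route from the paper. You diagonalize the operator in the exponential basis and feed in the distributional Fourier expansion of $|2\sin(y/2)|^{-\alpha}$ (justified by analytic continuation from $\alpha\in(0,1)$), then telescope the gamma ratios at indices $n\pm1$ via $\Gamma(z+1)=z\Gamma(z)$. The paper instead never touches the non-integrable kernel at exponent $-\alpha$ directly: it quotes the cosine-mode action from \cite[Proposition 4.1]{Cas1}, and for the sine modes integrates by parts using $\cos(y/2)|\sin(y/2)|^{1-\alpha}=\tfrac{2}{2-\alpha}\partial_y(|\sin(y/2)|^{2-\alpha})$ and splits $\cos y=1-2\sin^2(y/2)$, so that everything reduces to the absolutely convergent identity \eqref{2-16} and to Lemma \ref{A-1}; this sidesteps the finite-part/renormalization issue that you yourself flag as the main technical point of your approach and would still have to verify (your formula for the kernel's Fourier coefficients is exactly the content of Lemma \ref{A-1}, so you could also just invoke it and avoid the continuation argument altogether; watch the sign bookkeeping, since \eqref{sigma} has the bracket in the opposite order from the one you write). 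Your treatment of the off-diagonal vanishing ($f_j$ enters only through $\mathcal{F}^\alpha_{i,3}$, which is $O(\varepsilon)$ in that direction, cf.\ \eqref{gateaux2}) and of the isomorphism is essentially identical to the paper's: triviality of the kernel from $\sigma_1=0$, $\sigma_n>0$ and monotonicity for $n\ge2$, and boundedness of the inverse from $\sigma_n=O(n^{\alpha-1})$, i.e.\ $n\sigma_n\asymp n^{\alpha}$, matched against the Fourier-side weights of $X^{k+\alpha-1}$ and $Y^k_0$; the block-diagonal structure then gives the statement for $D_f\mathcal{F}^\alpha$. What your route buys is a cleaner conceptual picture (a single Fourier-multiplier computation) at the price of a regularization argument; what the paper's route buys is that every integral manipulated is classically convergent.
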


\begin{proof}

The direct computations derived in \eqref{gateaux} yield
   \begin{equation*}
	\partial_{f_i} \mathcal{F}^\alpha_{i}(0,0,\lambda)h_i=C_\alpha\gamma_i\left(1-\frac{\alpha}{2}\right)\int\!\!\!\!\!\!\!\!\!\; {}-{} \frac{h_i( x- y)\sin( y)d y}{\left(4\sin^2(\frac{ y}{2})\right)^{\frac{\alpha}{2}}}-C_\alpha \gamma_i\int\!\!\!\!\!\!\!\!\!\; {}-{} \frac{(h'_i( x)-h'_i( x- y))\cos( y)d y}{\left(4\sin^2(\frac{ y}{2})\right)^{\frac{\alpha}{2}}} .
\end{equation*}
and
\begin{equation*}
	\partial_{f_j} \mathcal{F}^\alpha_{i}(0,0,\lambda)h_i=0 .
\end{equation*}
  Now, It is well-known that  
  \begin{equation*}
      \partial_{f_i}  \mathcal{F}^\alpha_{i}(0,0,\lambda)(a^i_n\cos(n x))=a^i_n n\sigma_n  \sin (n  x),\quad \mbox{for}\quad n\geq 2 ,
  \end{equation*}
  see for instance \cite[Proposition 4.1]{Cas1}. Let $h_i\in X^{k+\alpha-1} $ be
    \begin{equation*}
        h_i( x)=\sum_{n=2}^{\infty}\left(a^i_n \cos (n  x)+d^i_n \sin (n  x)\right),
    \end{equation*}
Next, we compute how $\partial_{f_i} \mathcal{F}^\alpha_{i}(0,0,\lambda)(\sin(n x))$ acts on $-n\sigma_n \cos (n x)$ for each $n \ge 1$. By \eqref{4-1}, we have
	\begin{equation}\label{sin}
		C_\alpha\gamma_i\left(1-\frac{\alpha}{2}\right)\int\!\!\!\!\!\!\!\!\!\; {}-{} \frac{d^i_n \sin(n x-n y)\sin( y)d y}{\left(4\sin^2(\frac{ y}{2})\right)^{\frac{\alpha}{2}}}-C_\alpha \gamma_i n\int\!\!\!\!\!\!\!\!\!\; {}-{} \frac{(d^i_n\cos(n x)-d^i_n\cos(n x-n y))\cos( y)d y}{\left(4\sin^2(\frac{ y}{2})\right)^{\frac{\alpha}{2}}}.
	\end{equation}
	Using identity
    \begin{equation*}
        \cos\left(\frac{y}{2}\right)\left|\sin\left(\frac{y}{2}\right)\right|^{1-\alpha}=\frac{2}{2-\alpha}\partial_y\left(\left|\sin\left(\frac{y}{2}\right)\right|^{2-\alpha}\right),
    \end{equation*}
    and  the following identity (see \cite{Cas1})
    \begin{equation}\label{2-16}
        \int_0^\pi(\sin( y))^{2-\alpha}e^{\text{i} n y}d y=\frac{\pi e^{n\pi\text i}\Gamma(3-\alpha)}{2^{2-\alpha}\Gamma(2+n-\frac{\alpha}{2})\Gamma(2-n-\frac{\alpha}{2})}, \ \ \ \ \forall \, \alpha<3, \ \ \forall \, n\in\mathbb{R}.
    \end{equation}
	By applying integration by parts to the first term in \eqref{sin}, we obtain
	\begin{equation}\label{2-17}
		\begin{split}
			&C_\alpha \gamma_n\left(1-\frac{\alpha}{2}\right)\int\!\!\!\!\!\!\!\!\!\; {}-{} \frac{d^i_n\sin(n x-n y)\sin( y)d y}{\left(4\sin^2(\frac{ y}{2})\right)^{\frac{\alpha}{2}}}\\
			&=2^{1-\alpha}C_\alpha \gamma_n\left(1-\frac{\alpha}{2}\right)\int\!\!\!\!\!\!\!\!\!\; {}-{}d^i_n\sin(n x-n y)\cos\left(\frac{ y}{2}\right)\left|\sin\left(\frac{y}{2}\right)\right|^{1-\alpha}d y\\
			&=2^{1-\alpha}C_\alpha \gamma_n\left(1-\frac{\alpha}{2}\right)\frac{2n}{2-\alpha}d^i_n\int\!\!\!\!\!\!\!\!\!\; {}-{} \cos(n x-n y)\left|\sin\left(\frac{y}{2}\right)\right|^{2-\alpha}d y\\
			&=2^{1-\alpha}C_\alpha \gamma_n\left(1-\frac{\alpha}{2}\right)\frac{2n}{2-\alpha}d^i_n\cos(n x)\int\!\!\!\!\!\!\!\!\!\; {}-{} \cos(n y)\left|\sin\left(\frac{y}{2}\right)\right|^{2-\alpha}d y\\
			&=2^{1-\alpha}C_\alpha \gamma_n\left(1-\frac{\alpha}{2}\right)\frac{2n}{2-\alpha}\frac{d^i_n}{2\pi}\frac{\pi \cos(n\pi)\Gamma(3-\alpha)}{2^{1-\alpha}\Gamma(2+n-\frac{\alpha}{2})\Gamma(2-n-\frac{\alpha}{2})}\cos(n x).
		\end{split}
	\end{equation}
    For the second term in \eqref{sin} and the identity $\cos  y=1-2\sin^2(\frac{ y}{2})$, it holds
    \begin{equation*}
    	\begin{split}
    		&-C_\alpha \gamma_n n d^i_n\int\!\!\!\!\!\!\!\!\!\; {}-{} \frac{(\cos(n x)-\cos(n x-n y))\cos( y)d y}{\left(4\sin^2(\frac{ y}{2})\right)^{\frac{\alpha}{2}}}\\
    		&=-2^{-\alpha}C_\alpha \gamma_n n d^i_n\int\!\!\!\!\!\!\!\!\!\; {}-{} (\cos(n x)-\cos(n x-n y))\left|\sin\left(\frac{ y}{2}\right)\right|^{-\alpha}d y\\
    		&\ \ \ \ +2^{1-\alpha}C_\alpha \gamma_n n d^i_n\int\!\!\!\!\!\!\!\!\!\; {}-{} (\cos(n x)-\cos(n x-n y))\left|\sin\left(\frac{ y}{2}\right)\right|^{2-\alpha}d y.
    	\end{split}
    \end{equation*}
    In accordance with Lemma \ref{A-1}
     and the identity \eqref{2-16}, we can express the previous equation as
    \begin{equation}\label{2-18}
    	\begin{split}
    		& \ \ \ \ -\frac{C_\alpha}{2\pi } \gamma_n n d^i_n\frac{2\pi \Gamma(1-\alpha)}{\Gamma(\frac{\alpha}{2})\Gamma(1-\frac{\alpha}{2})}\left(\frac{\Gamma(\frac{\alpha}{2})}{\Gamma(1-\frac{\alpha}{2})}-\frac{\Gamma(j+\frac{\alpha}{2})}{\Gamma(1+j-\frac{\alpha}{2})}\right)\cos(n x)\\
    		&+2^{-1}\frac{C_\alpha}{2\pi } \gamma_n n d^i_n\frac{2\pi \Gamma(3-\alpha)}{\Gamma(\frac{\alpha}{2}-1)\Gamma(2-\frac{\alpha}{2})}\left(\frac{\Gamma(\frac{\alpha}{2}-1)}{\Gamma(2-\frac{\alpha}{2})}-\frac{\Gamma(j-1+\frac{\alpha}{2})}{\Gamma(2+j-\frac{\alpha}{2})}\right)\cos(n x)
    	\end{split}
    \end{equation}
  Summing it up, the $n$-th coefficient is exactly
    \begin{equation}\label{eqlin}
        \partial_{f_i} \mathcal{F}^\alpha_i(0, 0, \lambda)(d^i_n\sin (n x))=-d^i_n   n \sigma_n\cos (n x) .
    \end{equation}
    Then, we have
    \begin{equation*}
    \partial_{f_i} \mathcal{F}^\alpha_i(0, 0, \lambda)h_i= \sum_{n=2}^{\infty} n\sigma_n \left(a^i_n \sin (n x)-d^i_n \cos (n x)\right).
    \end{equation*}
To prove that $\partial_{f_i}\mathcal{F}^\alpha_i(0, 0, \lambda) h_i : X^{k+\alpha-1} \to Y_0^{k-1}$ is an isomorphism, we first note that the sequence $\{\sigma_n\}$, is both monotonically increasing and bounded below by a positive constant, see for instance Lemma \ref{A-1}. This ensures that the kernel of $\partial_{f_i} \mathcal{F}^\alpha_i(0, 0, \lambda)$ is trivial. 

Next, we aim to show that for any $p_i(x) \in Y_0^{k-1}$, there exists an $h_i(x) \in X^{k+\alpha-1}$ such that $\partial_{f_i} \mathcal{F}^\alpha_i(0, 0, \lambda) h_i = p_i$. According to the first part of the lemma, if $p_i$ can be expressed as
\[
p_i(x) = \sum\limits_{n=2}^\infty \tilde{a}_n^i \sin(n x) + \tilde{d}_n^i \cos(n x),
\]
then $h_i$ must satisfy
\[
h_i(x) = \sum\limits_{n=2}^\infty \left( a_n^i \sigma_n^{-1} n^{-1} \cos(n x) + d_n^i \sigma_n^{-1} n^{-1} \sin(n x) \right).
\]
By utilizing the asymptotic expansion of the Gamma function, we obtain $\sigma_n = O(n^{\alpha-1})$ for $\alpha \in (1, 2)$ (see Lemma \ref{A-1}). This leads to the following estimate
    \begin{equation*}
        \begin{aligned}
            \|h_i\|_{X^{k+\alpha-1}}&=\sum\limits_{n=2}^\infty ((\tilde{a}^i_n)^2+(\tilde{d}^i_n)^2)\sigma_n^{-2}n^{-2}n^{2k+2\alpha-2}\\
            &\leq  C\sum\limits_{n=2}^\infty ((\tilde{a}^i_n)^2+(\tilde{d}^i_n)^2)n^{-2(\alpha-1)}n^{-2}n^{2k+2\alpha-2}\\
            &\leq  C\sum\limits_{n=2}^\infty ((\tilde{a}^i_n)^2+(\tilde{d}^i_n)^2)n^{2k-2}\le C\|p_i\|_{Y_0^{k-1}},
        \end{aligned}
    \end{equation*}
which completes the proof of the first part.

Additionally, by \eqref{4-1}, we find that $\partial_{f_j} \mathcal{F}^\alpha_i(0, 0, \lambda) h_i = 0$ for $j \neq i$. Thus, we can write
\[
D_{f} \mathcal{F}^\alpha(0, 0, \lambda) = \operatorname{diag}\left( \partial_{f_1} \mathcal{F}^\alpha_1(0, 0, \lambda), \ldots, \partial_{f_N} \mathcal{F}^\alpha_N(0, 0, \lambda) \right),
\]
which implies that $D_{f} \mathcal{F}^\alpha(0, 0, \lambda)$ is an isomorphism from $\mathcal{X}^{k+\alpha-1}$ to $\mathcal{Y}_0^{k-1}$.  Thus, the proof is complete.
\end{proof}

\section{Existence of solutions}\label{section4}
\subsection{Integral identities for the stream function and the functional \texorpdfstring{$\mathcal{F}^\alpha$}{F\^{}alpha}}\label{sec:Gidentities}

In this subsection, we introduce several identities associated with the stream function $\psi_\varepsilon(z)$ defined by
\begin{equation}\label{eq:psi} 
\psi_\varepsilon(z) = \sum_{i=1}^N \frac{\gamma_i}{\varepsilon^2 b_i^2} \int_{\mathcal{D}_i^\varepsilon} K_\alpha(z - \xi)\, d\xi, \quad \forall z \in \mathbb{R}^2,
\end{equation}
which is linked to the vortex patch \eqref{intial-vort}, where $K_\alpha$ is defined in \eqref{eq:kernel}.

The identities derived in the subsequent subsections are key to uncovering the degeneracy of the functional that defines the relative equilibria in the gSQG equations \eqref{1-1}, for $1 \leq \alpha < 2$. We present a direct proof here, based on the structural properties of the kernel $K_\alpha$.
\begin{lemma}\label{lem:idens}
 The stream function corresponding to \eqref{eq:psi} satisfies the following set of identities
    \begin{enumerate}[label=\rm(\roman*)]
  \item $\displaystyle\sum_{i=1}^N\frac{\gamma_j}{\varepsilon^2 b_i^2 }\int_{\partial \mathcal{D}_i^\varepsilon}\psi_\varepsilon(z)dz=0$,
  \item $\displaystyle\sum_{i=1}^N\frac{\gamma_j}{\varepsilon^2 b_i^2 }\int_{\partial \mathcal{D}_i^\varepsilon}z\psi_\varepsilon(z)dz=0$.
  \end{enumerate}
\end{lemma}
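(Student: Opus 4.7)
My plan is to reduce both identities to symmetry statements about the double integral that arises after inserting the definition of $\psi_\varepsilon$. The key structural ingredient is that the kernel $K_\alpha$ is radial, so $K_\alpha(z-\xi)=K_\alpha(\xi-z)$ and any gradient in $z$ equals the negative of the corresponding gradient in $\xi$; this is the pointwise antisymmetry that drives the vanishing in both statements.

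For (i), I would apply Stokes' theorem (component-wise, via Green's formula) to convert each line integral into an area integral,
$$\oint_{\partial\mathcal{D}_i^\varepsilon}\psi_\varepsilon(z)\,dz=\int_{\mathcal{D}_i^\varepsilon}\nabla^\perp\psi_\varepsilon(z)\,dA,$$
so the whole sum collapses to $\int_{\mathbb{R}^2}\theta_0^\varepsilon\,\nabla^\perp\psi_\varepsilon\,dA$, after recognizing $\sum_i\frac{\gamma_i}{\varepsilon^2b_i^2}\chi_{\mathcal{D}_i^\varepsilon}=\theta_0^\varepsilon$. Inserting the Biot--Savart representation $\nabla^\perp\psi_\varepsilon(z)=\int\nabla_z^\perp K_\alpha(z-\xi)\theta_0^\varepsilon(\xi)\,d\xi$ produces a double integral whose kernel $\nabla_z^\perp K_\alpha(z-\xi)$ is antisymmetric under $z\leftrightarrow\xi$. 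Fubini together with a relabeling of dummy variables then forces the integral to equal minus itself, yielding (i).

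For (ii), I would reduce the boundary term to the area integral
$$\oint_{\partial\mathcal{D}_i^\varepsilon}\psi_\varepsilon(z)\,(z\cdot dz)=\int_{\mathcal{D}_i^\varepsilon}z\cdot\nabla^\perp\psi_\varepsilon\,dA$$
by applying Green's theorem to $P=z_1\psi_\varepsilon$, $Q=z_2\psi_\varepsilon$, so that the sum becomes $\int_{\mathbb{R}^2}\theta_0^\varepsilon(z)\,z\cdot\nabla^\perp\psi_\varepsilon(z)\,dA$. Writing $K_\alpha(z-\xi)=\phi(|z-\xi|)$, a short direct computation gives
$$z\cdot\nabla_z^\perp K_\alpha(z-\xi)=\frac{\phi'(|z-\xi|)}{|z-\xi|}\,(z_1\xi_2-z_2\xi_1),$$
whose $z_1\xi_2-z_2\xi_1$ factor is manifestly antisymmetric under $z\leftrightarrow\xi$ while the radial prefactor is symmetric. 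The same Fubini/swap argument as in (i) then closes (ii).

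The main technical obstacle is the singularity of $\nabla K_\alpha\sim|z-\xi|^{-\alpha-1}$ on the diagonal $z=\xi$ inside a single patch $\mathcal{D}_i^\varepsilon$: for $\alpha\in[1,2)$ this is not locally integrable, so neither Stokes' theorem nor Fubini can be applied to the singular kernel naively. I would bypass this in the standard way, either by regularizing the kernel (for instance replacing $|z-\xi|^{-\alpha}$ by $(|z-\xi|^2+\eta^2)^{-\alpha/2}$) and passing to the limit $\eta\to 0$, or by excising a disk $\{|z-\xi|<\eta\}$ around the diagonal, running the antisymmetry argument on the regular region, and checking that the excised boundary contributions vanish as $\eta\to 0$ by the explicit scaling of the kernel and the antisymmetry of the integrand.
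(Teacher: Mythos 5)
Your argument is correct, but it runs in the opposite direction from the paper's. You convert each boundary integral into an area integral via Green's theorem, so that (i) and (ii) become the vanishing of $\int_{\mathbb{R}^2}\theta_0^\varepsilon\,\nabla^\perp\psi_\varepsilon\,dA$ and $\int_{\mathbb{R}^2}\theta_0^\varepsilon\,z\cdot\nabla^\perp\psi_\varepsilon\,dA$, and then exploit the antisymmetry of $\nabla_z^\perp K_\alpha(z-\xi)$ (resp.\ of the factor $z_1\xi_2-z_2\xi_1$) under $z\leftrightarrow\xi$; since $|\nabla K_\alpha|\sim|z-\xi|^{-\alpha-1}$ is not locally integrable in the plane for $\alpha\in[1,2)$, you correctly flag that Fubini plus the swap needs a regularization or excision, and your scheme does close: with the mollified kernel $(|z-\xi|^2+\eta^2)^{-\alpha/2}$ the antisymmetrized double integral vanishes exactly for each $\eta>0$, and $\psi_\varepsilon^\eta\to\psi_\varepsilon$ uniformly on the boundaries, so the identities pass to the limit. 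The paper instead goes area$\to$boundary: it first rewrites $\psi_\varepsilon$ itself, via Green--Stokes, as a sum of line integrals of the contour-dynamics kernel $\widehat K_\alpha(\xi)\propto \xi^\perp/|\xi|^\alpha$, whose magnitude $\sim|\xi|^{1-\alpha}$ is integrable along curves for all $\alpha\in[1,2)$; then all the quantities in (i)--(ii) are absolutely convergent double boundary integrals, the exchange $z\leftrightarrow\xi$ needs no limiting argument, and for (ii) the symmetrized integrand is proportional to $(z-\xi)\cdot(z-\xi)^\perp/|z-\xi|^\alpha$, which vanishes pointwise. Your route buys a transparent physical reading (vanishing of the net $\int\theta v$ and $\int\theta\,z\cdot v$) at the cost of the extra limiting step; the paper's route avoids the diagonal singularity altogether and reduces the lemma to a two-line symmetry observation.
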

\begin{proof}
Applying Green-Stokes theorem to the stream function \eqref{eq:psi}, we derive the following expression
\[\psi_\varepsilon(z)=\sum_{i=1}^N\frac{\gamma_i}{\varepsilon^2 b_i^2}\int_{\partial \mathcal{D}_i^\varepsilon} \widehat K_\alpha(z-\xi)d\xi,\]
where $\widehat K_\alpha(\xi)$ has the following form
\[\widehat K_\alpha(\xi)=\displaystyle-\frac{C_\alpha}{2\pi(1-\frac\alpha2)}\frac{\xi^\perp}{|\xi|^\alpha}, \quad \text{if }\alpha\in[1,2) .\]
It follows that
\begin{equation}\label{eq:phi}
\begin{split}
    &\sum_{i=1}^N\frac{\gamma_i}{\varepsilon^2 b_i^2 }\int_{\partial \mathcal{D}_i^\varepsilon}\psi_\varepsilon(z)dz=\sum_{i=1}^N\sum_{j=1}^N \frac{\gamma_i\gamma_j}{\varepsilon^4 b_i^2b_j^2 }\int_{\partial \mathcal{D}_i^\varepsilon}\int_{\partial \mathcal{D}_j^\varepsilon} \widehat  K_\alpha(z-\xi)d\xi dz\\
    &
   \qquad \quad=\sum_{i=1}^N\sum_{j=1}^N \frac{\gamma_i\gamma_j}{\varepsilon^4 b_i^2b_j^2 }\left(\frac{1}{2}\int_{\partial \mathcal{D}_i^\varepsilon}\int_{\partial \mathcal{D}_j^\varepsilon}   \frac{z^\perp-\xi^\perp}{\abs{z-\xi}^\alpha}d\xi dz-\frac{1}{2}\int_{\partial \mathcal{D}_i^\varepsilon}\int_{\partial \mathcal{D}_j^\varepsilon}\frac{z^\perp-\xi^\perp}{\abs{z-\xi}^\alpha}d\xi dz\right)=0
\end{split}
\end{equation}
where the  integrand \( \widehat{K}_\alpha(z-\xi) \) changes sign when exchanging \( z \) and \( \xi \), which proves \text{(i)}. Now, by multiplying \( \psi_\varepsilon(z) \) by \( z \) and integrating over the boundaries, we obtain
\begin{equation*}
\begin{split}
 \int_{\partial \mathcal{D}_i^\varepsilon}z\psi_\varepsilon(z)dz &=\int_{\partial\mathcal{D}_j^\varepsilon}\int_{\partial \mathcal{D}_i^\varepsilon}z\widehat  K_\alpha(z-\xi)dz\,d\xi   \\
 &
 =\int_{\partial\mathcal{D}_j^\varepsilon}\int_{\partial \mathcal{D}_i^\varepsilon}  \frac{z(z^\perp-\xi^\perp)}{|z-\xi|^\alpha}dz\,d\xi\\
  &
 =\frac{1}{2}\int_{\partial\mathcal{D}_j^\varepsilon}\int_{\partial \mathcal{D}_i^\varepsilon}  \frac{z(z^\perp-\xi^\perp)}{|z-\xi|^\alpha}dz\,d\xi+\frac{1}{2}\int_{\partial\mathcal{D}_j^\varepsilon}\int_{\partial \mathcal{D}_i^\varepsilon}  \frac{\xi(\xi^\perp-z^\perp)}{|z-\xi|^\alpha}dz\,d\xi\\
  &
 =\frac{1}{2}\int_{\partial\mathcal{D}_j^\varepsilon}\int_{\partial \mathcal{D}_i^\varepsilon}  \frac{(z-\xi)(\xi^\perp-z^\perp)}{|z-\xi|^\alpha}dz\,d\xi=0\\
\end{split}
\end{equation*}
where we exchanged $\xi$ with $z$ to get the result. Consequently, this completes the proof of (ii).
\end{proof}

In this following, we establish various integral identities concerning to the nonlinear functional \(\mathcal{F}^\alpha_i\) associated to the gSQG equations \eqref{1-1}. We begin by demonstrating that the functional \(\mathcal{F}^\alpha_i\) as defined in \eqref{eq:funct} can be expressed in terms of the relative stream function
\begin{equation}\label{relative-stream-function}
\Psi_\varepsilon(z):=-\frac12\Omega|z|^2- U z^\perp+\psi_\varepsilon(z) ,
\end{equation}
restricted to points on the boundary $\varepsilon b_i z_i(x)+w_i\in \partial \mathcal{D}_i^\varepsilon$. Writing 
\begin{equation}\label{eq:para}
    z_i(x)=R_i(x)(\cos(x), \sin(x)) ,
\end{equation}
we claim that
\begin{align}\label{psiUG}
 \nabla\Psi(\varepsilon b_i z^\perp_i(x)+w^\perp_i)&=-\varepsilon b_i\, \mathcal{F}^\alpha_i(\varepsilon,f,x,\lambda).
\end{align}
To see this, we use  $v^\varepsilon(x) = \nabla^\perp\psi_\varepsilon(x)$ to rewrite
\begin{align*}
 &\nabla\Psi_\varepsilon(\varepsilon b_i z^\perp_i(x)+w^\perp_i)=\varepsilon b_i\, \nabla\Psi_\varepsilon\big(\varepsilon b_i z^\perp_i(x)+w^\perp_i\big) z^\prime_i(x)\\
  &
 \qquad =-\varepsilon b_i\, \Big(\Omega\big(\varepsilon b_i z_i(x)-w_i\big)-U+v^\varepsilon\big(\varepsilon b_i z_i(x)+w_i\big)\Big)z^\prime_i(x)\\ 
  &
 \qquad =-\varepsilon b_i\,\Big(\Omega\varepsilon b_i R_i(x)(\cos (x),\sin(x))-\Omega w_i-U\Big)\\
  &
  \qquad\qquad  -\varepsilon b_iv^\varepsilon\big(\varepsilon b_i z_i(x)+w_i\big)\left(R^\prime_i(x)(\cos(x),\sin(x))+R_i(x)(-\sin(x),\cos(x))\right)\\ 
  &
  \qquad=-\varepsilon b_i\,\left[ \Omega\varepsilon b_i R_i(x)R'_i(x)+w_iR'_i(x)(\cos(x),\sin(x))+w_iR_i(x)(-\sin(x),\cos(x)))\right]\\
  &
  \qquad \qquad -\varepsilon b_i\,\left[ U R'_i(x)(\cos(x),\sin(x))+U R_i(x)(-\sin(x),\cos(x))+v^\varepsilon\big(\varepsilon b_i z_i(x)+w_i\big)z'_i(x)\right]\\
  &
  \qquad  = -\varepsilon b_i\, \mathcal{F}^\alpha_i(\varepsilon,f,x,\lambda) .
\end{align*}
We can now establish the following result, which will be utilized in the proof of our main theorem.
\begin{lemma}\label{identities2}
Let \((\varepsilon,f,\lambda) \in (-\varepsilon_0, \varepsilon_0) \times \mathcal{B}_{X} \times \Lambda\). The following identities are established
 \begin{enumerate}[label=\rm(\roman*)]
  \item If $\Omega=0$, then 
  % \textcolor{red}{here is $z_i(x)$ and not $R_i(x)$}
    \begin{align}
      \label{eqn:Gident:trans}
\sum_{i=1}^N\frac{\gamma_i}{2\pi}\displaystyle\int_{0}^{2\pi}\mathcal{F}^\alpha_i(\varepsilon,f,\lambda)\big(1+\varepsilon\abs{\varepsilon}^\alpha b_i^{1+\alpha}f_i(x)\big)(\cos(x), \sin(x))dx
     =0 .
    \end{align}
  \item If $U=0$, then
    \begin{align}
      \label{eqn:Gident:rot}
  {\resizebox{.93\hsize}{!}{$  \displaystyle\sum_{i=1}^N\frac{\gamma_i}{2\pi}\displaystyle\int_{0}^{2\pi}\mathcal{F}^\alpha_i(\varepsilon,f,\lambda)\left[\varepsilon b_i (1+\varepsilon\abs{\varepsilon}^\alpha b_i^{1+\alpha}f_i(x))^2+ (1+\varepsilon\abs{\varepsilon}^\alpha b_i^{1+\alpha}f_i(x))w_i\cdot(\cos(x), \sin(x))\right]dx=0.$}}
    \end{align}
  \end{enumerate}
\end{lemma}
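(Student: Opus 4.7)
The central observation is the boundary formula \eqref{psiUG}, which states that $\mathcal{F}^\alpha_i$ equals the (scaled) tangential derivative of the relative stream function $\Psi_\varepsilon$ along $\partial\mathcal{D}_i^\varepsilon$:
\[\frac{d}{dx}\bigl[\Psi_\varepsilon(\varepsilon b_i z_i(x)+w_i)\bigr] = -\varepsilon b_i\, \mathcal{F}^\alpha_i(\varepsilon,f,\lambda,x).\]
The plan is to multiply this identity by a well-chosen weight $g_i(x)$, sum against $\gamma_i$, integrate over $[0,2\pi]$, and use integration by parts (justified by $2\pi$-periodicity) to transfer the $x$-derivative off $\Psi_\varepsilon$ onto $g_i$. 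Since $dz=\varepsilon b_i z_i'(x)\,dx$ on $\partial\mathcal{D}_i^\varepsilon$, the resulting expression becomes a weighted sum of boundary integrals of $\Psi_\varepsilon$ over the curves $\partial\mathcal{D}_i^\varepsilon$, which is then controlled using Lemma~\ref{lem:idens} together with the equilibrium constraints embedded in $\Lambda$.

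For (i), I would take $g_i(x)=R_i(x)(\cos x,\sin x)=z_i(x)$, so that $g_i'(x)=z_i'(x)$; integration by parts yields $\sum_i\frac{\gamma_i}{\varepsilon^2 b_i^2}\oint_{\partial\mathcal{D}_i^\varepsilon}\Psi_\varepsilon(z)\,dz$. Substituting \eqref{relative-stream-function} with $\Omega=0$: the quadratic $|z|^2$ piece disappears, the $\psi_\varepsilon$ contribution vanishes by Lemma~\ref{lem:idens}(i), and the residual $U$-piece is computed via Green's theorem to be a multiple of $U\sum_i\gamma_i|\mathcal{O}_i^\varepsilon|$. The latter is zero because the relation $\sum_i\gamma_i\mathcal{P}_i^\alpha(\lambda^*)=U\sum_i\gamma_i$ (obtained from \eqref{alg-sysP} by antisymmetrizing the interaction kernel) together with $\mathcal{P}^\alpha(\lambda^*)=0$ forces $\sum_i\gamma_i=0$ whenever $U\neq0$.

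For (ii), I would choose $g_i(x)=\varepsilon b_i R_i^2(x)+w_i\cdot z_i(x)$. The key algebraic fact is that $g_i=(|z|^2-w_i\cdot z)/(\varepsilon b_i)$, hence $g_i'(x)=(2z-w_i)\cdot z_i'(x)$. Integration by parts now yields $\sum_i\frac{\gamma_i}{\varepsilon^2 b_i^2}\oint\Psi_\varepsilon(z)(2z-w_i)\cdot dz$. With $U=0$, the $\psi_\varepsilon$ contribution splits into two pieces: $2\sum_i\frac{\gamma_i}{\varepsilon^2 b_i^2}\oint\psi_\varepsilon\,(z\cdot dz)$, killed by the scalar-contraction reading of Lemma~\ref{lem:idens}(ii); and $-\sum_i\frac{\gamma_i}{\varepsilon^2 b_i^2}\,w_i\cdot\oint\psi_\varepsilon\,dz$, which vanishes by combining Lemma~\ref{lem:idens}(i) with the rotational equilibrium constraint $\sum_i\gamma_i w_i=0$. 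The $-\frac{1}{2}\Omega|z|^2$ part is handled by Green's theorem and cancels against the same equilibrium identities.

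The main obstacle I anticipate is the careful bookkeeping of these residual external contributions and verifying that the correct scalar/vector interpretations of the integrals in Lemma~\ref{lem:idens} are used consistently (particularly in item (ii), where the dot-product contraction between $z$ and $dz$ is essential). The argument crucially relies on the equilibrium relations $\sum_i\gamma_i\mathcal{P}_i^\alpha=0$, which hold throughout $\Lambda$ since $\Lambda$ is a neighborhood of $\lambda^*$ satisfying $\mathcal{P}^\alpha(\lambda^*)=0$.
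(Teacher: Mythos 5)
Your overall skeleton is the same as the paper's: use the representation \eqref{psiUG} of $\mathcal{F}^\alpha_i$ as (up to the factor $-\varepsilon b_i$) the tangential derivative of $\Psi_\varepsilon$ along $\partial\mathcal{D}_i^\varepsilon$, integrate by parts using periodicity so the weighted integrals become boundary integrals of $\Psi_\varepsilon$ (resp.\ $z\Psi_\varepsilon$), and kill the $\psi_\varepsilon$ contribution with Lemma~\ref{lem:idens}. The divergence, and the genuine gap, is in how you dispose of the residual rigid-motion pieces. You eliminate them by invoking the equilibrium relations $\sum_i\gamma_i=0$ (when $U\neq0$) and $\sum_i\gamma_i w_i=0$ (when $\Omega\neq0$), claiming these hold ``throughout $\Lambda$'' because $\mathcal{P}^\alpha(\lambda^*)=0$. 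But $\Lambda$ is only an open neighborhood of $\lambda^*$; $\mathcal{P}^\alpha$ does not vanish on it, and the lemma must hold for \emph{every} $(\varepsilon,f,\lambda)$ in $(-\varepsilon_0,\varepsilon_0)\times\mathcal{B}_X\times\Lambda$ --- this is exactly what is needed to verify \eqref{condition 22b} and \eqref{condition 22}, i.e.\ hypothesis \eqref{eqn:FG} of Lemma~\ref{abstract-lemma}. In the applications the circulations and centers are among the unknowns $\lambda_1$ that the implicit function theorem varies (e.g.\ $\gamma_1$ in the traveling pair), so off the solution branch $\sum_i\gamma_i\neq0$ and $\sum_i\gamma_i w_i\neq0$, and your argument gives nothing there. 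There is also a secondary slip: even granting $\sum_i\gamma_i=0$, your residual is a multiple of $U\sum_i\gamma_i|\mathcal{O}_i^\varepsilon|$, and $|\mathcal{O}_i^\varepsilon|=\pi+\tfrac12\big(\varepsilon|\varepsilon|^\alpha b_i^{1+\alpha}\big)^2\|f_i\|_{L^2}^2$ depends on $i$ through $f_i$, so this sum is not $(\sum_i\gamma_i)\pi$ and does not cancel exactly.

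The paper's proof does not need any equilibrium information at this stage: after the same integration by parts, the $U$- and $\Omega$-contributions are written as integrals of exact differentials over the closed curves and vanish by periodicity alone --- e.g.\ in \eqref{psi} the terms $\int_0^{2\pi} z_i\cdot z_i'\,dx=\tfrac12\int_0^{2\pi}\tfrac{d}{dx}|z_i|^2\,dx=0$ and $\int_0^{2\pi} w_i\cdot z_i'\,dx=0$, and in part (ii) the cubic term is handled through $\oint |z|^2\,z\cdot dz=\tfrac14\oint d(|z|^4)=0$; this is what makes the identities hold identically in $(\varepsilon,f,\lambda)$, which is the whole point of the lemma. Note also that your Green's-theorem evaluation $\oint_{\partial\mathcal{D}_i^\varepsilon}(U\cdot z^\perp)\,dz=U|\mathcal{D}_i^\varepsilon|$ is the componentwise (vector-valued) reading of these line integrals, whereas the cancellations the paper relies on use the contracted (dot-product) reading of the same expressions; you should fix one consistent interpretation of the scalar/vector pairings before the final step, since with the componentwise reading your residual term is genuinely present and the equilibrium constraints you appeal to are not available on all of $\Lambda$ to remove it.
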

%%%%%%%%%%%%%%%%%%%%%%%%%%%%%%%%%%%%%%%%%%%%%%%%%%%%%%
\begin{proof}
By continuity, it is sufficient to examine the case where $\varepsilon \neq 0$.   We begin by proving part {\rm (i)}.
  Averaging \eqref{f1} over the boundaries, with $\Omega=0$ and applying Lemma \ref{lem:idens} and then summing, we obtain
  $$
  \sum_{i=1}^N \frac{\gamma_i}{\varepsilon^2 b_i^2} \int_{\partial \mathcal{D}_i^\varepsilon} \Psi_\varepsilon(z) dz
  =
  -U \sum_{i=1}^N \frac{\gamma_i}{\varepsilon^2 b_i^2} \int_{\partial \mathcal{D}_i^\varepsilon} z^\perp_i(x) \, dx.
  $$
Considering \eqref{Dj} and \eqref{eq:para}, we can derive the following by performing straightforward changes of variables
  \begin{equation}\label{psi}
    \sum_{i=1}^N\frac{\gamma_i}{\varepsilon b_i} \int_{0}^{2\pi}\Psi_\varepsilon\big({\varepsilon b_i}z^\perp_i(x)+w^\perp_i\big)z'_i(x) dx=- \frac{U}{2}\sum_{j=1}^N\frac{\gamma_i}{\varepsilon b_i}\int_{0}^{2\pi}\big({\varepsilon b_i}z_i(x)-w_i\big)z'_i(x)dx.
  \end{equation}
Then, by applying \eqref{psiUG}, we derive the following expression
  \begin{equation}\label{id-psi-mass}
  \begin{split}
    \sum_{i=1}^N \frac{\gamma_i}{\varepsilon^2 b_i^2} \int_{0}^{2\pi} \Psi_\varepsilon(\varepsilon b_i z^\perp _i(x)+w^\perp _i) z'_i(x) \, dx&=    \sum_{i=1}^N \frac{\gamma_i}{\varepsilon^2 b_i^2} \int_{0}^{2\pi} \nabla\Psi_\varepsilon(\varepsilon b_i z^\perp_i(x)+w^\perp_i) z_i(x)  \, dx\\
   &
    =-\sum_{i=1}^N \frac{\gamma_i}{\varepsilon b_i} \int_{0}^{2\pi}  \mathcal{F}^\alpha_i(\varepsilon,f,\lambda) z_i(x)\, dx.
    \end{split}
    \end{equation}
This concludes the proof of part {\rm (i)}. For part {\rm (ii)}, we proceed by averaging \eqref{f1} over the boundaries, assuming $U=0$. and the applying Lemma \ref{lem:idens} and summing the resulting expressions, we obtain

  $$
\sum_{i=1}^N\frac{\gamma_i}{\varepsilon^2 b_i^2}\int_{\partial \mathcal{D}_i^\varepsilon}z\Psi^\varepsilon(z)dz=
  -\frac{\Omega}{2}\, \sum_{i=1}^N\frac{\gamma_i}{\varepsilon^2 b_i^2}\int_{\partial \mathcal{D}_i^\varepsilon}z|z|^2dz.
  $$
By performing integration by parts and utilizing \eqref{psiUG}, we derive the following equation
\begin{equation}\label{eq:omega2}
   \begin{split}
     &\sum_{i=1}^N \frac{\gamma_i}{\varepsilon b_i}\int_{0}^{2\pi}(\varepsilon b_i z_i(x)+w_i)\Psi(\varepsilon b_i z^\perp_i(x)+w^\perp_i) z'_i(x)\\
                      &
         = \sum_{i=1}^N \frac{\gamma_i}{\varepsilon b_i}\int_{0}^{2\pi}\Psi(\varepsilon b_i z^\perp_i(x)+w^\perp_i)\nabla\left(\varepsilon b_i \abs{z_i(x)}^2+w_i z_i(x)\right) \\
         &=
 - \sum_{i=1}^N  \frac{\gamma_i}{\varepsilon b_i}  \int_{0}^{2\pi} \nabla \Psi_\varepsilon(\varepsilon b_i z^\perp_i(x)+w^\perp_i)\left(\varepsilon b_i \abs{z_i(x)}^2 + w_i z_i(x)\right) \, dx \\
    &=
    \sum_{i=1}^N \gamma_i \int_{0}^{2\pi} \mathcal{F}^\alpha_i(\varepsilon, f, \lambda)\left(\varepsilon b_i \abs{z_i(x)}^2 + w_iz_i(x)\right) \, dx .
   \end{split} 
\end{equation}
By applying \eqref{conf0},\eqref{eq:para} into the left-hand side of \eqref{eq:omega2}, we finally deduce that \eqref{eqn:Gident:rot}.   This completes the proof.
  \end{proof}
%%%%%%%%%%%%%%%%%%%%%%%%%%%%%%%%%%
\subsection{Existence of Vortex Patch Equilibria for the gSQG equations}

In this subsection, we present a detailed formulation of Theorem \ref{thm:general}, employing a modified implicit function theorem as described in Lemma~\ref{abstract-lemma}, specifically for the gSQG equations with \(1<\alpha<2\). The stationary case is characterized by a greater level of degeneracy than the case of rigid motion. As a result, we will treat this case separately. The findings related to solutions corresponding to rigidly rotating or traveling vortex patches are articulated as follows.
\begin{theorem} \label{existence}
Let \(\alpha \in (1,2)\) and let \(\lambda^*\) represent a non-degenerate solution to the \(N\)-vortex problem \eqref{alg-sysP}, as outlined in Definition~\ref{def:non-deg}{\rm (i)}, with the condition that either \(\Omega\) or \(U\) is non-zero. The subsequent statements are true:
\begin{enumerate}[label=\rm(\roman*)]
    \item There exists a small $\varepsilon_1 > 0$ and a unique $C^1$ function $(f, \lambda_1) : (-\varepsilon_1, \varepsilon_1) \to \mathcal{B}_X \times \R^{2N-1}$ such that
    \begin{equation} \label{sol_g-alpha}
    \mathcal{F}^\alpha \big(\varepsilon, f(\varepsilon), \lambda_1(\varepsilon), \lambda_2^*\big) = 0,
    \end{equation}
    where $\lambda_1(\varepsilon) = \lambda_1^* + O(\varepsilon)$ and 
    \[
    f_i(\varepsilon,x) = \varepsilon {b_i} \Xi_\alpha \sum_{j=1, j\neq i}^N \frac{\gamma_j}{\gamma_i} \frac{(w_j - w_i)^2}{|
    w_j - w_i|^{\alpha + 4}} \sin(x)\cos(x)+ O(\varepsilon), 
    \]
    with $\Xi_\alpha := \frac{(\alpha + 2) \Gamma(1 - \frac{\alpha}{2}) \Gamma(3 - \frac{\alpha}{2})}{ \Gamma(2 - \alpha)}$.

      \item For all $\varepsilon \in (-\varepsilon_1, \varepsilon_1) \setminus \{0\}$, the domains $\mathcal{O}_i^\varepsilon$, whose boundaries are parametrized by $R_i(x) = 1 + \varepsilon |\varepsilon|^\alpha b_i^{1 + \alpha} f_i(x) : \mathbb{T} \to \partial \mathcal{O}_i^\varepsilon$, are strictly convex.
\end{enumerate}
\end{theorem}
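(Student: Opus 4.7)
The strategy is to apply a modified implicit function theorem to the map $(\varepsilon,f,\lambda_1)\mapsto \mathcal{F}^\alpha(\varepsilon,f,\lambda_1,\lambda_2^*)$ at the point $(0,0,\lambda_1^*)$. The starting observation, read off from \eqref{3-7}, is that
\[
\mathcal{F}_i^\alpha(0,0,\lambda)=\mathcal{P}_i^\alpha(\lambda)\cdot(-\sin x,\cos x),
\]
so at $\varepsilon=0$ the functional is supported entirely on the first Fourier mode, and its vanishing is by Proposition \ref{equivalence} equivalent to $\lambda^*$ solving the point‑vortex system. Together with Proposition \ref{iso}, this separates the problem into two blocks: on $\mathcal{Y}^k_0$ the $f$‑derivative $D_f\mathcal{F}^\alpha(0,0,\lambda^*)$ is already an isomorphism with diagonal symbol $n\sigma_n$, while on the first‑mode block $(\mathrm{span}\{\sin x,\cos x\})^N\simeq\mathbb{R}^{2N}$ the $f$‑derivative is identically zero and one instead differentiates in $\lambda_1$.

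On the first‑mode block the $\lambda_1$‑derivative at $f=0$ is exactly $D_{\lambda_1}\mathcal{P}^\alpha(\lambda^*)$, which by the non‑degeneracy assumption \eqref{non-deg-codim1} has codimension one in $\mathbb{R}^{2N}$. The single missing direction is precisely the one eliminated by Lemma \ref{identities2}: identity (i) in the traveling case $\Omega=0$, and identity (ii) in the rotating case $U=0$, each provides a nontrivial scalar linear constraint that $\mathcal{F}^\alpha$ satisfies identically on the first‑mode block, and one checks that this constraint is exactly the cokernel direction of $D_{\lambda_1}\mathcal{P}^\alpha(\lambda^*)$ inside the first‑mode block. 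Consequently, after quotienting the target $\mathcal{Y}^k$ by this one‑dimensional compatibility condition, the full joint linearization $D_{(f,\lambda_1)}\mathcal{F}^\alpha(0,0,\lambda_1^*,\lambda_2^*)$ becomes an isomorphism. Combined with the continuity and $C^1$ regularity obtained in Propositions \ref{p3-1} and \ref{lem2-3}, the modified implicit function theorem (Lemma \ref{abstract-lemma} in the paper) then yields a unique $C^1$ branch $(f(\varepsilon),\lambda_1(\varepsilon))$ solving \eqref{sol_g-alpha}, with $\lambda_1(\varepsilon)=\lambda_1^*+O(\varepsilon)$ and $f(\varepsilon)=O(\varepsilon)$ in $\mathcal{X}^{k+\alpha-1}$.

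To obtain the explicit $O(\varepsilon)$ profile for $f_i$, I would expand each $\mathcal{F}_{i,j}^\alpha$ to one order beyond \eqref{3-7}. The linear term in $f$ has already been computed in Proposition \ref{iso}; the only other contribution at order $\varepsilon$ comes from the $B_{i,j}$ expansion of the interaction kernel $\mathcal{F}^\alpha_{i,3}$ in \eqref{333}, which after one more Taylor step produces a term proportional to $(w_j-w_i)^2/|w_j-w_i|^{\alpha+4}$ times $\sin(2x)$, i.e.\ $\sin x\cos x$. Inverting the symbol $n\sigma_n$ at $n=2$ and collecting the Gamma constants using the identity $\sigma_2=\frac{2^{\alpha-1}\Gamma(1-\alpha)}{\Gamma(1-\alpha/2)^2}\bigl(\tfrac{\Gamma(1+\alpha/2)}{\Gamma(2-\alpha/2)}-\tfrac{\Gamma(2+\alpha/2)}{\Gamma(3-\alpha/2)}\bigr)$ produces the prefactor $\Xi_\alpha$. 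The main obstacle in this step is only the bookkeeping of the Gamma function constants, not any genuine analytic difficulty; modulo that calculation the first‑mode compatibility from Lemma \ref{identities2} already ensures consistency of the resulting formula.

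For part (ii), I would run the standard curvature‑perturbation argument. Writing the boundary $\partial\mathcal{O}_i^\varepsilon$ in polar form $r=R_i(x)=1+\varepsilon|\varepsilon|^\alpha b_i^{1+\alpha}f_i(x)$, the signed curvature is
\[
\kappa_i(x)=\frac{R_i(x)^2+2R_i'(x)^2-R_i(x)R_i''(x)}{\bigl(R_i(x)^2+R_i'(x)^2\bigr)^{3/2}},
\]
which equals $1$ identically at $\varepsilon=0$. Since $f_i(\varepsilon,\cdot)=O(\varepsilon)$ in $X^{k+\alpha-1}$ with $k\geq 3$, Sobolev embedding gives $\|R_i-1\|_{C^2}=O(\varepsilon^{1+\alpha})$, so $\kappa_i(x)$ stays uniformly positive for $\varepsilon$ small, proving strict convexity. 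The main genuine obstacle in the whole proof is the second paragraph: identifying that the one‑dimensional cokernel of $D_{\lambda_1}\mathcal{P}^\alpha(\lambda^*)$ is exactly compensated by the single scalar identity of Lemma \ref{identities2}, so that the restricted linearization is an honest isomorphism.
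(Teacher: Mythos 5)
Your proposal follows essentially the same route as the paper: the joint linearization \eqref{diff-g-gen} with trivial kernel and codimension-one range, the integral identities of Lemma \ref{identities2} supplying the single scalar constraint needed to invoke the modified implicit function theorem (Lemma \ref{abstract-lemma}), the $\varepsilon$-expansion of the interaction term $\mathcal{F}^\alpha_{i,3}$ inverted on the second Fourier mode via $\sigma_2$ to produce $\Xi_\alpha$, and the curvature-perturbation argument for strict convexity. The only cosmetic difference is that the paper encodes your ``compatibility equals cokernel'' step by explicitly constructing the functional $\Phi$ in \eqref{PHI12}--\eqref{PHI22} and checking \eqref{condition 12b}--\eqref{condition 3}, whereas in Lemma \ref{abstract-lemma} the alignment of the constraint with the missing range direction is automatic from the identity $F(G(x,y),x,y)=0$ together with the dimension count, so no separate verification of that alignment is needed.
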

\begin{proof}
Considering Proposition \ref{equivalence} and Proposition \ref{iso}, for any \( h \in \mathcal{X}^{k+\alpha-1} \) and \( \dot{\lambda}_1 \in \mathbb{R}^{2N-1} \), we obtain 
\begin{equation}\label{diff-g-gen}
D_{(f,\lambda_1)}\mathcal{F}^\alpha(0,0,\lambda^*)( h, \dot\lambda_1)(x)=
 D_{f}\mathcal{F}^\alpha(0,0,\lambda^*)h(x)+ 
 D_{\lambda_1}\mathcal{P}_i^\alpha(\lambda^*)\dot\lambda_1
  (-\sin x,\cos x),
\end{equation}
where \( D_f \mathcal{F}^\alpha(0, 0, \lambda^*) \) is an isomorphism from \( \mathcal{X}^{k+\alpha-1} \) to \( \mathcal{Y}_0^k \). Given the assumptions concerning the matrix \( D_{\lambda_1} \mathcal{P}^\alpha(\lambda^*) \), we find that it possesses a trivial kernel, and thus
\[
\mathrm{ran} [D_{\lambda_1} \mathcal{F}^\alpha(0, 0, \lambda^*)] \subset \mathbb{Y} := \left\{ x \mapsto a_1^1 \sin(x) + d_1^1 \cos(x) : (a_1^1, d_1^1) \in \mathbb{R}^{2N} \right\}
\]
is of codimension 1. Additionally, it is straightforward to observe that
\begin{equation}\label{www2}
\mathcal{Y}^{k}=\mathcal{Y}_0^{k} \oplus \mathbb{Y}. 
\end{equation}
Thus, one has
\begin{equation}\label{ker-ran-DGb}
\codim\ran D_{(f,\lambda_1)}\mathcal{F}^\alpha(0,0,\lambda^*) = 1 \quad \textnormal{and} \quad  \ker D_{(f,\lambda_1)}\mathcal{F}^\alpha(0,0,\lambda^*)=\{ 0\}.
\end{equation} 
In the scenario of pure translation (i.e., \( \Omega = 0 \) and \( U \neq 0 \)), we define
 \begin{equation}\label{PHI12}
\Phi(\varepsilon,f, g,\lambda):=\sum_{i=1}^N\frac{\gamma_i}{\pi} \int_{0}^{2\pi}g_i(x)\big(1+\varepsilon\abs{\varepsilon}^\alpha b_i^{1+\alpha}  f_i(x)\big)(\cos(x),\sin(x))dx
\end{equation}
whereas in the case of pure rotation (i.e., \( \Omega \neq 0 \) and \( U = 0 \)), we instead define
\begin{equation}\label{PHI22}
   {\resizebox{.99\hsize}{!}{$\Phi(\varepsilon,f, g,\lambda):=\displaystyle
   \sum_{i=1}^N\frac{\gamma_i}{\pi}\displaystyle\int_{0}^{2\pi}g_i(x)\left[\varepsilon b_i (1+\varepsilon\abs{\varepsilon}^\alpha b_i^{1+\alpha}f_i(x))^2+ (1+\varepsilon\abs{\varepsilon}^\alpha b_i^{1+\alpha}f_i(x))w_i\cdot(\cos(x),\sin(x))\right]\, dx $}}
\end{equation}
with \( g = (g_1, \ldots, g_N) \in \mathcal{Y}^k \). It is evident that the mapping \( \Phi \colon (-\varepsilon_0, \varepsilon_0) \times \mathcal{B}_X \times \mathcal{B}_Y \times \Lambda \to \mathbb{R} \) is \( C^1 \), and for all \( (\varepsilon, f, \lambda) \in (-\varepsilon_0, \varepsilon_0) \times \mathcal{B}_X \times \Lambda \), we have 
\begin{align} 
\label{condition 12b}
\Phi\big(\varepsilon, f, 0, \lambda\big) &= 0.
\end{align}
Moreover, by equations \eqref{PHI12}--\eqref{PHI22} and Lemma \ref{identities2}, we obtain
\begin{align} 
\label{condition 22b}
\Phi\big(\varepsilon, f, \mathcal{F}^\alpha(\varepsilon, f, \lambda), \lambda\big) &= 0.
\end{align}
By differentiating equations \eqref{PHI12} with respect to \( g \) in the direction \( \tilde{g} = (\tilde{g}_1, \ldots, \tilde{g}_N) \in \mathcal{Y}^k \), where
\begin{equation}
\label{expang2}
\tilde{g}_i(x) = \sum_{n=1}^{\infty} \left(a^i_n \sin(n x) + d^i_n \cos(n x)\right), \quad i = 1, \ldots, N,
\end{equation}
we find, for \( \Omega = 0 \) and \( U \neq 0 \),
\begin{align*}
D_{g} \Phi(0, 0, 0, \lambda) \tilde{g} &= \sum_{i=1}^N \frac{\gamma_i}{\pi} \int_{0}^{2\pi} \tilde{g}_i(x) (\cos(x), \sin(x)) \, dx =  \sum_{i=1}^N \gamma_i (a^i_1,d^i_1).
\end{align*}
%%%%%%%%%%
In fact, by using \eqref{3-7} and subsequently computing the Gateaux derivatives of \eqref{PHI12} in the direction of \( \tilde{g}_i \), one obtains
\begin{equation}\label{eq:omega6}
    \begin{split}
       &  \partial_{g_i}\Phi (\varepsilon,f,g,\lambda)\tilde g_i= \frac{\gamma_i}{\pi}   \displaystyle\int_{0}^{2\pi}\big(1+\varepsilon\abs{\varepsilon}^\alpha b_i^{1+\alpha}  f_i(x)(\cos(x),\sin(x))\big)\, \tilde{g}_i(x)dx \\
&
 =\frac{\gamma_i}{\pi} \displaystyle\int_{0}^{2\pi}\tilde{g}_i(x)  (\cos(x),\sin(x))\,dx +\varepsilon\mathcal{R}_{\Phi}\\
  &
 =\frac{\gamma_i}{\pi} \displaystyle\int_{0}^{2\pi}\left(a^i_n\sin(n x)+d^i_n \cos (n x)\right)  (\cos(x),\sin(x))\,dx=\gamma_i (a^i_1,d^i_1) +\varepsilon\mathcal{R}_{\Phi}
    \end{split} 
\end{equation}
where we expanded $\tilde g_i(x)=\sum\limits_{n=1}^{\infty }\left(a^i_n\sin(n x)+d^i_n \cos (n x)\right)$, and computed the contribution for each $n$. Specifically, we evaluate the integral
 \begin{align*}
&\displaystyle\int_{0}^{2\pi} \cos(nx) (\cos(x),\sin(x))=\displaystyle\int_{0}^{2\pi} \sin(nx) (\cos(x),\sin(x))=0,\,\mbox{for}\, n\geq 2,
  \end{align*}
 where the product-to-sum identities $\cos(A)\sin(B) = \frac{1}{2} [\sin(A + B) - \sin(A - B)]$, $\sin(A)\sin(B) = \frac{1}{2}[\cos(A-B) - \cos(A+B)]$ and $\cos(A)\cos(B) = \frac{1}{2}[\cos(A-B) + \cos(A+B)]$ were used.  In other words, there are only one contribution on the sum which is given by the first element of the Fourier series ($n=1$).
 %%%%%%%%%%%%%1%%%%%%
 Similarly, we can compute the Gateaux derivative on the case   $\Omega \neq 0$ and $U=0$, which is given by
 \begin{equation}\label{eq:omega8}
    \begin{split}
       &  \partial_{g_i}\Phi (\varepsilon,f,g,\lambda)\tilde g_i\\
       &
       \qquad=  \frac{\gamma_i}{\pi}  \displaystyle\int_{0}^{2\pi}\left[\varepsilon b_i (1+\varepsilon\abs{\varepsilon}^\alpha b_i^{1+\alpha}f_i(x))^2+ (1+\varepsilon\abs{\varepsilon}^\alpha b_i^{1+\alpha}f_i(x))w_i\cdot(\cos(x),\sin(x))\right]\, \tilde{g}_i(x)dx \\
&
  \qquad=\frac{\gamma_i}{\pi} \displaystyle\int_{0}^{2\pi}\tilde{g}_i(x) w_i \cdot(\cos(x),\sin(x))\,dx +\varepsilon\mathcal{R}_{\Phi}\\
 &
\qquad = \frac{\gamma_i}{\pi} \displaystyle\int_{0}^{2\pi}\left(a^i_n\sin(n x)+d^i_n \cos (n x)\right) w_i \cdot(\cos(x),\sin(x))\,dx=\gamma_i w_i\cdot(a^i_1,d^i_1) +\varepsilon\mathcal{R}_{\Phi}\\
 &
 \qquad =\gamma_i w_i\cdot (a^i_{1},d^i_{1})+\varepsilon\mathcal{R}_{\Phi} .
    \end{split} 
\end{equation}
Then, we have  for $\Omega \neq 0$ and $U=0$
 \begin{align*}
 D_{g}\Phi (0,0,0,\lambda)\tilde g &=\sum_{i=1}^N\frac{\gamma_i}{\pi}\int_{0}^{2\pi}\tilde g_i(x) w_i\cdot(\cos(x),\sin(x))\,dx=\sum_{i=1}^N{\gamma_i}w_i\cdot (a^i_{1},d^i_{1}).
\end{align*}
In either case, it is straightforward to verify that
\begin{align} 
\label{condition 3}
\mathrm{ran}\big[ D_{g} \Phi(0, 0, 0, \lambda^*) \big] &= \mathbb{R}.
\end{align}
Thus, the existence and uniqueness in {\rm (i)} follow from \eqref{ker-ran-DGb}--\eqref{condition 3} and Lemma~\ref{abstract-lemma}. 

Next, by differentiating \eqref{sol_g-alpha} with respect to \( \varepsilon \) at the point \( (0, 0, \lambda^*) \), we find
\begin{align}
\label{diff-g-gen-2}
D_{(f, \lambda_1)} \mathcal{F}^\alpha(0, 0, \lambda^*) \partial_\varepsilon \big( f(\varepsilon), \lambda(\varepsilon) \big) \Big|_{\varepsilon=0} &= -\partial_\varepsilon \mathcal{F}^\alpha \big( 0, 0, \lambda^* \big).
\end{align}

Considering \eqref{333}, for all \( \alpha \in (1, 2) \), we have
\begin{equation*}
    \begin{split}
         & {\resizebox{.99\hsize}{!}{$\mathcal{F}^\alpha_{i,3} =-\frac{\alpha C_\alpha }{2}\displaystyle\sum_{j \neq i} \frac{\gamma_j}{b_j}\displaystyle\fint \frac{  B_{i,j}\sin( x- y)d y }{\left(A_{i, j}\right)^{\frac{\alpha}{2}+1}}-\frac{\alpha (\alpha+2)C_\alpha }{4}\sum_{j \neq i}    \frac{\gamma_j}{b_j}\displaystyle\fint\int_0^1\frac{\varepsilon  \tau B^2_{i,j}\sin( x- y)d y d\tau }{\left(A_{i, j}+t\tau\varepsilon B_{i, j}( x,  y)\right)^{\frac{\alpha}{2}+2}}+\varepsilon \abs{\varepsilon}^\alpha\mathcal{R}_{i, 3}(\varepsilon ,f,\lambda)$}}\\
&
=-\frac{\alpha C_\alpha }{2}\sum_{j \neq i}  \frac{\gamma_j}{b_j} \displaystyle\fint \frac{  B_{i,j}\sin( x- y)d y }{\left(A_{i, j}\right)^{\frac{\alpha}{2}+1}}\\
&
 \qquad{\resizebox{.93\hsize}{!}{$\frac{-\alpha (\alpha+2)C_\alpha}{4} \displaystyle\sum_{j \neq i}    \frac{\gamma_j}{b_j}\displaystyle\fint\frac{\varepsilon  (w_i-w_j)^2(b_i(\cos x, \sin x)-b_j(\cos y, \sin y))^2\sin( x- y)d y }{\left(A_{i, j}\right)^{\frac{\alpha}{2}+2}}+\varepsilon \abs{\varepsilon}^\alpha\mathcal{R}_{i, 3}(\varepsilon ,f,\lambda)$}}\\
 &
 =-\frac{\alpha C_\alpha }{2}\sum_{j \neq i}   \frac{\gamma_j}{b_j}\displaystyle\fint \frac{  B_{i,j}\sin( x- y)d y }{\left(A_{i, j}\right)^{\frac{\alpha}{2}+1}}\\
 &
 \qquad{\resizebox{.93\hsize}{!}{$\frac{-\alpha (\alpha+2)C_\alpha }{4}\displaystyle\sum_{j \neq i}  \frac{\gamma_j}{b_j}  \displaystyle\fint\frac{\varepsilon  (w_i-w_j)^2(b_i^2+b_j^2-2b_ib_j\cos(x)\cos(y)-2b_ib_j\sin(x)\sin(y))\sin(x-y)d y }{\abs{w_i-w_j}^{\alpha+4}}+\varepsilon \abs{\varepsilon}^\alpha\mathcal{R}_{i, 3}(\varepsilon ,f,\lambda)$}}\\
  &
 ={\resizebox{.95\hsize}{!}{$\frac{\alpha C_\alpha}{2}\displaystyle\sum_{j \neq i} \gamma_j \frac{\left(w_i-w_j\right) \cdot(\sin  (x),-\cos  (x))}{\left|w_i-w_j\right|^{\alpha+2}}+\alpha (\alpha+2)C_\alpha  \displaystyle\sum_{j \neq i}\gamma_j  b_i \frac{\varepsilon  (w_i-w_j)^2 }{\abs{w_i-w_j}^{\alpha+4}}\sin(x)\cos(x)+\varepsilon \abs{\varepsilon}^\alpha\mathcal{R}_{i, 3}(\varepsilon ,f,\lambda)$}}
    \end{split}
\end{equation*}
Thus, we can conclude that
\begin{align}
\label{f0dif-eps}
\partial_\varepsilon \mathcal{F}^\alpha_i(0, 0, \lambda^*)(x) &= \alpha(\alpha+2)C_\alpha\sum_{j=1, j \neq i}^N \gamma_j b_i \frac{(w_j - w_i)^2}{|w_i - w_j|^{\alpha + 4}} \sin(x)\cos(x).
\end{align}
Thus, for all \( \alpha \in (1, 2) \), we conclude that   
\[
\partial_\varepsilon \mathcal{F}^\alpha_i(0, 0, \lambda^*) \in \mathcal{Y}_0^k.
\]
Given that the linear operator 
\( D_{f} \mathcal{F}^\alpha(0, 0, \lambda^*) \colon \mathcal{X}^{k+\alpha-1} \to \mathcal{Y}_0^k \) is an isomorphism and, according to the hypothesis, the kernel of the operator \( D_{\lambda_1} \mathcal{P}_i^\alpha(\lambda^*) \) is trivial, we can combine \eqref{diff-g-gen}, \eqref{diff-g-gen-2}, \eqref{f0dif-eps}, and Proposition~\ref{iso}{\rm (iii)} to deduce that
\[
\partial_\varepsilon \lambda(\varepsilon) \big|_{\varepsilon=0} = 0 ,
\]
and 
\[\partial_\varepsilon f_i(\varepsilon) \big|_{\varepsilon=0}(x) = \frac{\alpha(\alpha+2) C_\alpha}{\sigma_2} \sum_{j=1, j \neq i}^N \frac{\gamma_j}{\gamma_i} \frac{b_i (w_j - w_i)^2}{|w_j - w_i|^{\alpha + 4}} \sin(x)\cos(x)\, \quad \text{if } \alpha \in (1, 2).\]
Finally, simple calculations using \eqref{sigma} yield
\begin{equation}
\label{gam1hatg}
\frac{\alpha C_\alpha}{\sigma_2} = \frac{\Gamma\left(1 - \frac{\alpha}{2}\right) \Gamma\left(3 - \frac{\alpha}{2}\right)}{\Gamma(2 - \alpha)},
\end{equation}
thus completing the proof of {\rm (i)}. 

 Now, we prove {\rm (iii)}. In other words, we verify that the set of solutions \( R_{i}(x) \) parameterizes convex patches. To do this, it suffices to compute the signed curvature of the interface of the patch \( z_{i}(x) = (z_{i}^{1}(x), z_{i}^{2}(x)) = R_i(x)(\cos(x), \sin(x)) \) at the point \( x \). Indeed, we find that the signed
curvature $\kappa _{i}(x)$ is given by
\begin{equation*}
\begin{split}
\kappa _{i}(x)& 
=\frac{(1+\varepsilon ^{1+\alpha}b_{i}^{1+\alpha}f_{i}(x))^{2}+2\varepsilon
^{2+2\alpha}b_{i}^{2+2\alpha}(f_{i}^{\prime }(x))^{2}-\varepsilon ^{1+\alpha}b_{i}^{1+\alpha}f_{i}^{\prime
\prime }(x)(1+\varepsilon ^{1+\alpha}b_{i}^{1+\alpha}f_{i}(x))}{\left( (1+\varepsilon
^{1+\alpha}b_{i}^{1+\alpha}f_{i}(x))^{2}+\varepsilon ^{2+2\alpha}b_{i}^{2+2\alpha}(f_{i}^{\prime
}(x))^{2}\right) ^{\frac{3}{2}}}\\
&
=\frac{1+O(\varepsilon )}{1+O(\varepsilon )}%
>0,
\end{split}%
\end{equation*}%
for $\varepsilon $ small and each $x\in \lbrack 0,2\pi )$. The quantity
obtained is non-negative if $\varepsilon\in (-\varepsilon_1, \varepsilon_1) $. Then, the signed
curvature is strictly positive and we obtain the desired result. Hence the
proof of Theorem \ref{existence} is completed.
\end{proof}

%%%%%%%%%%%%%%%%%%%%%%%
Now, we consider the stationary case where \( \Omega = U = 0 \), when $1<\alpha<2$. The following result holds

\begin{theorem}\label{theorem-Phi-stationary}
Let \( \alpha \in (1,2) \), and consider \( \lambda^* \) as a non-degenerate solution, according to Definition~\ref{def:non-deg}{\rm (ii)}, of the \( N \)-vortex problem \eqref{alg-sysP} with \( \Omega = U = 0 \). In this case, the conclusions of Theorem~\ref{existence} still hold, but now \( \lambda_1(\varepsilon) \) maps to \( \mathbb{R}^{2N-3} \) instead of \( \mathbb{R}^{2N-1} \).
\end{theorem}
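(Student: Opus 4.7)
The plan is to adapt the argument of Theorem \ref{existence} to accommodate the extra codimension that appears when $\Omega = U = 0$. Since both conditions $\Omega=0$ and $U=0$ hold, \emph{both} identities of Lemma \ref{identities2} now apply to $\mathcal{F}^\alpha(\varepsilon, f, \lambda)$, yielding three scalar constraints: two from the translation identity \eqref{eqn:Gident:trans} and one from the rotation identity \eqref{eqn:Gident:rot}. This matches exactly the codimension $3$ in the non-degeneracy hypothesis \eqref{non-deg-codim3}, so the strategy is to run the modified implicit function theorem with an $\mathbb{R}^3$-valued auxiliary functional $\Phi$.

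First, I would verify the analogue of \eqref{ker-ran-DGb}. With $\Omega = U = 0$ fixed and $\lambda_1 \in \mathbb{R}^{2N-3}$, the formula \eqref{diff-g-gen} still reads
\begin{equation*}
D_{(f,\lambda_1)}\mathcal{F}^\alpha(0,0,\lambda^*)( h,\dot\lambda_1)(x)= D_{f}\mathcal{F}^\alpha(0,0,\lambda^*)h(x) + D_{\lambda_1}\mathcal{P}^\alpha(\lambda^*)\dot\lambda_1 \cdot (-\sin x,\cos x).
\end{equation*}
Proposition \ref{iso} gives that $D_f\mathcal{F}^\alpha(0,0,\lambda^*)\colon \mathcal{X}^{k+\alpha-1}\to \mathcal{Y}_0^k$ is an isomorphism, while the non-degeneracy assumption guarantees that $D_{\lambda_1}\mathcal{P}^\alpha(\lambda^*)$ has trivial kernel and range of codimension $3$ in $\mathbb{R}^{2N}$. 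Combined with the decomposition $\mathcal{Y}^k = \mathcal{Y}_0^k\oplus \mathbb{Y}$ from \eqref{www2}, this yields $\ker D_{(f,\lambda_1)}\mathcal{F}^\alpha(0,0,\lambda^*)=\{0\}$ and $\codim \ran D_{(f,\lambda_1)}\mathcal{F}^\alpha(0,0,\lambda^*)=3$.

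Next I would introduce the $\mathbb{R}^3$-valued map $\Phi=(\Phi_1,\Phi_2)$, where $\Phi_1$ is the right-hand side of \eqref{PHI12} (valued in $\mathbb{R}^2$) and $\Phi_2$ is the right-hand side of \eqref{PHI22} (valued in $\mathbb{R}$). The identities $\Phi(\varepsilon,f,0,\lambda)=0$ and $\Phi(\varepsilon,f,\mathcal{F}^\alpha(\varepsilon,f,\lambda),\lambda)=0$ hold by construction and by Lemma \ref{identities2}. A direct computation mirroring \eqref{eq:omega6}--\eqref{eq:omega8} gives, for $\tilde g_i(x)= a^i_1\sin x + d^i_1\cos x + \ldots$,
\begin{equation*}
D_g\Phi(0,0,0,\lambda^*)\tilde g = \Bigl(\sum_i \gamma_i d^i_1,\ \sum_i \gamma_i a^i_1,\ \sum_i \gamma_i(a^i_1 w_{i2}+d^i_1 w_{i1})\Bigr).
\end{equation*}
Surjectivity onto $\mathbb{R}^3$ is a linear-algebraic condition on $(\gamma_i,w_i)$ that is automatic as soon as not all vortex centers are collinear through the origin; I would check that this is implied by (or added to) the non-degeneracy condition \eqref{non-deg-codim3}. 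With the three hypotheses of Lemma \ref{abstract-lemma} verified, we obtain a unique $C^1$ curve $\varepsilon\mapsto (f(\varepsilon),\lambda_1(\varepsilon))$ solving $\mathcal{F}^\alpha(\varepsilon,f(\varepsilon),\lambda_1(\varepsilon),\lambda_2^*)=0$.

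The expansions $\lambda_1(\varepsilon)=\lambda_1^*+O(\varepsilon)$ and the leading-order formula for $f_i(\varepsilon,x)$ follow by differentiating at $\varepsilon=0$ exactly as in \eqref{diff-g-gen-2}--\eqref{f0dif-eps}; the computation \eqref{f0dif-eps} of $\partial_\varepsilon \mathcal{F}^\alpha_i(0,0,\lambda^*)$ is insensitive to whether $\Omega$ or $U$ vanish, and the identity \eqref{gam1hatg} then yields the stated leading term with constant $\Xi_\alpha$. The strict convexity statement follows verbatim from the curvature estimate at the end of the proof of Theorem \ref{existence}. The main obstacle I anticipate is the surjectivity of $D_g\Phi(0,0,0,\lambda^*)$ in the stationary regime: unlike in the rotating/traveling case where the single scalar condition \eqref{condition 3} is essentially automatic, here one must ensure that the three linear functionals on the first Fourier coefficients $\{(a_1^i,d_1^i)\}$ are linearly independent, which is the finite-dimensional counterpart of the codimension-$3$ non-degeneracy and deserves an explicit check on the target configurations (pair, tripole, etc.) treated in Theorem \ref{thm:informal-pair}.
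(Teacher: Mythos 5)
Your proposal follows essentially the same route as the paper: it establishes $\codim\ran D_{(f,\lambda_1)}\mathcal{F}^\alpha(0,0,\lambda^*)=3$ with trivial kernel from Definition~\ref{def:non-deg}(ii), builds the $\mathbb{R}^3$-valued functional out of the two components of \eqref{eqn:Gident:trans} and the single component of \eqref{eqn:Gident:rot}, computes $D_g\Phi(0,0,0,\lambda^*)\tilde g=\sum_i\gamma_i\bigl(d^i_1,\,a^i_1,\,w^1_i d^i_1+w^2_i a^i_1\bigr)$, and invokes Lemma~\ref{abstract-lemma}, exactly as in the paper. The surjectivity onto $\mathbb{R}^3$ that you flag as a potential obstacle is in fact automatic whenever the (nonzero-strength) vortex centers are not all identical, since the vectors $(1,0,w^1_i)$ and $(0,1,w^2_i)$ over two distinct centers already span $\mathbb{R}^3$, so no extra hypothesis beyond \eqref{non-deg-codim3} is needed.
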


\begin{proof}
We will concentrate on demonstrating that the existence and uniqueness follow the same reasoning as in Theorem~\ref{existence}{\rm (i)}. The proofs for the asymptotic expansion and part {\rm (ii)} will utilize similar arguments as those presented in Theorem~\ref{existence}.

From Proposition~\ref{iso}, the assumptions regarding the matrix \( D_{\lambda_1} \mathcal{P}^\alpha(\lambda^*) \), together with \eqref{diff-g-gen} and \eqref{www2}, allow us to conclude that
\begin{equation}\label{ker-ran-DG2}
\mathrm{codim} \, \mathrm{ran} \, D_{(f, \lambda_1)} \mathcal{F}^\alpha(0, 0, \lambda^*) = 3 \quad \text{and} \quad \ker D_{(f, \lambda_1)} \mathcal{F}^\alpha(0, 0, \lambda^*) = \{ 0 \}.
\end{equation}
For all \( (\varepsilon, f, \lambda) \in (-\varepsilon_0, \varepsilon_0) \times \mathcal{B}_X \times \Lambda \), we define
\begin{align*}
\widetilde\Phi(\varepsilon, f, g, \lambda) := \sum_{i=1}^N \frac{\gamma_j}{\pi}
\begin{pmatrix}
\displaystyle \int_{0}^{2\pi}g_i(x)\big(1+\varepsilon\abs{\varepsilon}^\alpha b_i^{1+\alpha}  f_i(x)\big) \cos(x) dx \\
\displaystyle\int_{0}^{2\pi}g_i(x)\big(1+\varepsilon\abs{\varepsilon}^\alpha b_i^{1+\alpha}  f_i(x)\big) \sin(x) dx \\
   {\resizebox{.7\hsize}{!}{$\displaystyle\int_{0}^{2\pi}g_i(x)\left[\varepsilon b_i (1+\varepsilon\abs{\varepsilon}^\alpha b_i^{1+\alpha}f_i(x))^2+ (1+\varepsilon\abs{\varepsilon}^\alpha b_i^{1+\alpha}f_i(x))w_i\cdot(\cos(x),\sin(x))\right]\, dx $}}
\end{pmatrix}.
\end{align*}
The map \( \widetilde\Phi \colon (-\varepsilon_0, \varepsilon_0) \times \mathcal{B}_X \times\mathcal{B}_Y  \times \Lambda \to \mathbb{R}^3 \) is continuously differentiable  and satisfies the following condition
\begin{align} \label{condition 12}
\widetilde\Phi \big( \varepsilon, f, 0, \lambda \big) &= 0.
\end{align}
Additionally, using Lemma~\ref{identities2}, we obtain
\begin{align} \label{condition 22}
\widetilde\Phi \big( \varepsilon, f, \mathcal{F}^\alpha(\varepsilon, f, \lambda), \lambda \big) &= 0.
\end{align}
By differentiating \( \widetilde\Phi \) in the direction \( \tilde{g} = (\tilde{g}_1, \ldots, \tilde{g}_N) \in \mathcal{Y}^k \), as given in \eqref{expang2}, we find that
\begin{align*}
D_{g} \widetilde\Phi(0, 0, 0, \lambda) \tilde{g} &=  \sum_{i=1}^N \frac{\gamma_i}{\pi} \int_{0}^{2\pi} \tilde{g}_i(x) \, dx =  \sum_{i=1}^N \gamma_i \begin{pmatrix} d^i_{1}\\  a^i_{1} \\ w^1_i  d^i_{1} +w^2_i a^i_{1} \end{pmatrix}.
\end{align*}
Finally, it follows that
\begin{align} \label{condition 32}
\mathrm{ran} \left[ D_{g} \widetilde\Phi(0, 0, 0, \lambda^*) \right] &= \mathbb{R}^3.
\end{align}
Thus, using \eqref{ker-ran-DG2}--\eqref{condition 32} together with Lemma~\ref{abstract-lemma}, we obtain the required result.
\end{proof}

\subsection{Existence of Vortex Patch Equilibria for the SQG equation}
%%%%%%%%%%
Let $\mathcal{B}_{X}$ be an open neighborhood of zero in the space $X^{k+\log}$,
i.e.,
\begin{equation*}
\mathcal{B}_{X}:=\left\{ f\in \mathcal{X}^{k+\log}: \|f\|_{\mathcal{X}^{k+\log}(\mathbb{T})}< 1 .\right\}
\end{equation*}%
The following proposition establishes the \( C^1 \) continuity of the functional \( \mathcal{F}^1_i \). This result is analogous to Proposition \ref{p3-1} and Proposition \ref{lem2-3}, but in this instance, we focus on the case \(\alpha = 1\).

\begin{lemma}\label{lem2-7}
Let \(\alpha=1\) and let \(\lambda^*\) be a solution to \eqref{alg-sysP}. There exists \(\varepsilon_0 > 0\) and a small neighborhood \(\Lambda\) of \(\lambda^*\) such that the functional \(\mathcal{F}^1_i\) can be extended to a \(C^1\) mapping
$
\left(-\varepsilon_0, \varepsilon_0\right) \times \mathcal{B}_X \times \Lambda \to \mathcal{B}_Y.
$
\end{lemma}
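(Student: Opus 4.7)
The plan is to mirror the proofs of Proposition~\ref{p3-1} and Proposition~\ref{lem2-3}, but with $\alpha=1$ fixed and the fractional space $X^{k+\alpha-1}$ replaced by the logarithmic space $X^{k+\log}$ from \eqref{Valpha0}. Throughout, I will use the scaling factor $\varepsilon|\varepsilon|$ (i.e., $\varepsilon|\varepsilon|^\alpha$ with $\alpha=1$) to perform the extension of the functional to negative $\varepsilon$, which is the reason the desingularizing exponent was introduced in \eqref{conf0}. The first step is to decompose $\mathcal{F}^1_i = \mathcal{F}^1_{i,1}+\mathcal{F}^1_{i,2}+\mathcal{F}^1_{i,3}$ exactly as in \eqref{f1}--\eqref{f3} and then split $\mathcal{F}^1_{i,2}$ in the same way as in \eqref{2-2} into $\mathcal{F}^1_{i21}+\mathcal{F}^1_{i22}+\mathcal{F}^1_{i23}+\mathcal{F}^1_{i24}$.

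For $\mathcal{F}^1_{i,1}$ and $\mathcal{F}^1_{i,3}$, the argument carries over almost verbatim from Proposition~\ref{p3-1}: $\mathcal{F}^1_{i,1}$ is polynomial in the data and $\mathcal{F}^1_{i,3}$ is non-singular after applying Taylor's formula \eqref{taylor} to the factor $(A_{i,j}+\varepsilon B_{i,j})^{-1/2}$, since the denominators $A_{i,j}=|w_i-w_j|^2$ are uniformly bounded away from zero for $\lambda$ in a small neighborhood $\Lambda$ of $\lambda^*$. These pieces yield bounded contributions in $\mathcal{Y}^k$ and are manifestly $C^1$ in $(\varepsilon,f,\lambda)$. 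The delicate term is $\mathcal{F}^1_{i,2}$, and within it, the leading singular piece $\mathcal{F}^1_{i22}$, where the kernel $|\sin(y/2)|^{-1}$ sits precisely at the borderline of integrability after the tangential correction built into \eqref{eq:funct}.

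For $\mathcal{F}^1_{i21}$, I will apply Taylor's formula \eqref{taylor} twice (as in \eqref{2-6}) to cancel the apparent $1/(\varepsilon|\varepsilon|)$ prefactor and extract a linear-in-$f_i$ principal part together with an $\varepsilon|\varepsilon|$-bounded remainder. For $\mathcal{F}^1_{i22}$, after taking $\partial^{k-1}$ derivatives, the principal contribution has the form
\[
\partial^{k-1}\mathcal{F}^1_{i22}\sim C\gamma_i\int\!\!\!\!\!\!\!\!\!\; {}-{} \frac{\partial^k f_i(x)-\partial^k f_i(x-y)}{|\sin(y/2)|}\cos(y)\,dy + \text{l.o.t.},
\]
and the $L^2$-norm of this double integral is exactly what is controlled by the extra requirement in the definition of $X^{k+\log}$. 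This is the precise reason the logarithmic space is introduced at $\alpha=1$, and it is the crux of the argument. The remaining terms $\mathcal{F}^1_{i23}$ and $\mathcal{F}^1_{i24}$ carry additional prefactors of $\varepsilon|\varepsilon|$ and are subsumed by $\|f_i\|_{X^{k+\log}}$ via straightforward Sobolev embedding of $H^k\hookrightarrow W^{2,\infty}$ for $k\geq 3$.

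Continuity in $(\varepsilon,f,\lambda)$ and the existence and continuity of the Gateaux derivatives $\partial_{f_i}\mathcal{F}^1_i\,h_i$ and $\partial_{f_j}\mathcal{F}^1_i\,h_j$ (for $j\neq i$) follow by repeating the mean-value-theorem identity \eqref{2-7} with $\alpha=1$: the quotient
\[
\frac{1}{D_1(f_{i1})^{1/2}}-\frac{1}{D_1(f_{i2})^{1/2}}
\]
is estimated by bounded quantities times $(f_{i1}-f_{i2})$ after an application of the fundamental theorem of calculus, and every resulting integral is either non-singular or of the borderline type controlled by the $X^{k+\log}$ norm. The derivative formulas \eqref{2-11}--\eqref{2-14} translate literally (with $\alpha=1$), and continuity of each of the four pieces in $\mathcal{Y}^k$ follows by the same estimates. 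I expect the main obstacle to be making the difference quotient estimate for $\partial_{f_i}\mathcal{F}^1_{i22}$ fully rigorous: one must verify that the extra $|\sin(y/2)|^{-1}$ factor, combined with the divided differences of $\partial^k f_i$, gives a Cauchy sequence in $L^2$ as the increment tends to zero, and this is where the logarithmic correction in the norm of $X^{k+\log}$ must be exploited with care via an approximation-by-smooth-functions argument together with dominated convergence. Once this is established, $C^1$-dependence on $\lambda$ (and $\varepsilon$) is routine since the $\lambda$-dependence enters only through the non-singular denominators in $\mathcal{F}^1_{i,3}$.
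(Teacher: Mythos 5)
Your proposal follows essentially the same route as the paper: decompose $\mathcal{F}^1_i$ as in \eqref{f1}--\eqref{f3} and \eqref{2-2}, desingularize via Taylor's formula \eqref{taylor}, identify $\mathcal{F}^1_{i22}$ as the only genuinely borderline term whose $\partial^{k-1}$ derivative is controlled exactly by the extra integral condition defining $X^{k+\log}$, obtain continuity through the mean-value identity \eqref{2-7} with $\alpha=1$, and transfer the Gateaux-derivative formulas \eqref{2-11}--\eqref{2-14} verbatim. This matches the paper's proof of the lemma, which likewise reduces everything to the arguments of Propositions~\ref{p3-1} and \ref{lem2-3} with the fractional space replaced by the logarithmic one.
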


\begin{proof}
We begin by outlining the proof of continuity. Similar to the gSQG equations, it is easy to see that \( \mathcal{F}^1_{i,1} \) maps \( \left(-\varepsilon_0, \varepsilon_0\right) \times \mathcal{B}_X \times \Lambda \) to \( \mathcal{B}_Y \), with the expression
\[
\mathcal{F}^1_{i,1} = \Omega w_i \cdot (-\sin(x), \cos(x)) + U(-\sin(x), \cos(x)) + \varepsilon |\varepsilon| \mathcal{R}_{i1}(\varepsilon, f_i),
\]
where \( \mathcal{R}_1(\varepsilon, f_i) \) is continuous. From this, we conclude that \( \mathcal{F}^1_{i,1} \) is continuous.

Since \( f_i \in X^{k+\log} \), and by applying the Taylor expansion \eqref{taylor} to \( \mathcal{F}^1_{i,2} \), we can address potential singularities at \( \varepsilon = 0 \). As noted in the proof of Proposition \ref{p3-1}, the most singular term is \( \mathcal{F}^1_{i22} \). Therefore, we must compute the \( \partial^{k-1} \) derivatives of \( \mathcal{F}^1_{i22} \).
\begin{equation*}
\begin{split}
& {\resizebox{.96\hsize}{!}{$\partial ^{k-1}\mathcal{F}^1_{i22}=C_\alpha\gamma _{i}\displaystyle\fint\frac{%
(\partial ^{k}f_{i}(y)-\partial ^{k}f_{i}(x))\cos (x-y)dy}{\left( A( x, y)+\varepsilon|\varepsilon|b_i^{2} B\left( f_{i}, x, y\right)\right)^{\frac{1}{2}}}  -C_\alpha\gamma _{i}\varepsilon |\varepsilon |b_{i}^{2}\displaystyle\fint\frac{\cos (x-y)}{\left( A( x, y)+\varepsilon|\varepsilon|b_i^{2} B\left( f_{i}, x, y\right)\right)^{\frac{3}{2}}} $}}\\
& \ \ \ \ {\resizebox{.96\hsize}{!}{$\times \left(\varepsilon |\varepsilon|
b_i^{2}(f_i(x)-f_i(y))(f'_i(x)-f'_i(y))+2((1+\varepsilon|\varepsilon|
b_i^{2}f_i(x))f'_i(y)+(1+\varepsilon|\varepsilon|
b_i^{2}f_i(y))f'_i(x))\sin^2(\frac{x-y}{2})\right)$}} \\
& \ \ \ \ \times (\partial ^{k-1}f_{i}(y)-\partial ^{k-1}f_{i}(x))dy+l.o.t,
\end{split}%
\end{equation*}%
By the Sobolev embedding theorem, we have that 
$
\| \partial^m f_i \|_{L^\infty} \leq C \| f_i \|_{X^{k+\log}} < \infty
$
for \( m = 0, 1, 2 \) and \( k \geq 3 \). Using the mean value theorem along with Hölder's inequality, we obtain
\[
\begin{aligned}
\left\| \partial^{k-1} \mathcal{F}^1_{i22} \right\|_{L^2} &\leq C \left\| \int \!\!\!\!\!\!\! \!\!\!\!\;{}-{} \frac{\partial^k f_i(x) - \partial^k f_i(y)}{|4 \sin \left(\frac{x - y}{2}\right)|} \, dy \right\|_{L^2} \\
&\quad + C \left\| \int \!\!\!\!\!\!\! \!\!\!\!\;{}-{} \frac{\partial^{k-1} f_i(x) - \partial^{k-1} f_i(y)}{|4 \sin \left(\frac{x - y}{2}\right)|} \, dy \right\|_{L^2} \\
&\leq C \| f_i \|_{X^{k+\log}} + C \| f_i \|_{X^{k+\log-1}} < \infty.
\end{aligned}
\]
Thus, we conclude that the range of \( \mathcal{F}^1_{i22} \) belongs to \( Y^{k-1} \).

By the results and notations established in Proposition \ref{p3-1}, specifically equation \eqref{2-7}, we only need to verify the continuity of the most singular term, \( \mathcal{F}^1_{i22} \). More specifically, for \( f_{i1}, f_{i2} \in X^{k+\log} \), we have 
\[
\| \mathcal{F}^1_{i22}(\varepsilon, f_{i1}) - \mathcal{F}^1_{i22}(\varepsilon, f_{i2}) \|_{Y^{k-1}} \leq C \| f_{i1} - f_{i2} \|_{X^{k+\log}}.
\]
Finally, following the same approach as in the proof of Proposition \ref{p3-1}, we obtain the continuity of the nonlinear functional \( \mathcal{F}^1_{i22} : (-\varepsilon_0, \varepsilon_0) \times \mathcal{B}_X \times \Lambda \to \mathcal{B}_Y \), and by invoking the Taylor formula \eqref{taylor}, we derive the following expression for \( \mathcal{F}^1_{i2} \)
\begin{equation}
{\resizebox{.94\hsize}{!}{$
\mathcal{F}^1_{i,2}=\frac{\gamma _{i}}{4}\displaystyle\fint   \frac{f_i( x- y)\sin( y)d y}{\left(\sin^2(\frac{ y}{2})\right)^{\frac{1}{2}}}-\frac{\gamma _{i}}{2}\displaystyle\fint \frac{(f'_i( x)-f'_i( x- y))\cos( y)d y}{\left(\sin^2(\frac{ y}{2})\right)^{\frac{1}{2}}}+\varepsilon\mathcal{R}_{i,2}(\varepsilon ,f_i),$}}  \label{2-21}
\end{equation}%
where $\mathcal{R}_{2}:\left(-\varepsilon_0, \varepsilon_0\right) \times \mathcal{B}_X\times \Lambda \rightarrow \mathcal{B}_Y$ is continuous.

Similarly, we can prove that \( \mathcal{F}^1_{i,3} : (-\varepsilon_0, \varepsilon_0) \times \mathcal{B}_X \times \Lambda \rightarrow \mathcal{B}_Y \) is continuous and can be written as:
\begin{equation}
    \mathcal{F}^1_{i,3} = \sum_{j \neq i} \frac{\gamma_j (w_i - w_j)}{2 |w_i - w_j|^3} \cdot (\sin(x), -\cos(x)) + \varepsilon \mathcal{R}_{i,3}(\varepsilon, f, \lambda),
\end{equation}
where \( \mathcal{R}_{i,3} : (-\varepsilon_0, \varepsilon_0) \times \mathcal{B}_X \times \Lambda \rightarrow \mathcal{B}_Y \) is also continuous. Hence, the proof of the continuity of the functional \( \mathcal{F}^1_i \) is complete.

Moreover, the continuity of \( \partial_{f_i} \mathcal{F}^1_i(\varepsilon, f, x, \lambda) h_i : (-\varepsilon_0, \varepsilon_0) \times \mathcal{B}_X \times \Lambda \rightarrow \mathcal{B}_Y \) and \( \partial_{f_j} \mathcal{F}^1_i(\varepsilon, f, x, \lambda) h_j : (-\varepsilon_0, \varepsilon_0) \times \mathcal{B}_X \times \Lambda \rightarrow \mathcal{B}_Y \) follows in the same manner as Proposition \ref{lem2-3}, by employing the same reasoning as in the case where \( 1 < \alpha < 2 \). Thus, the proof is omitted.
\end{proof}

As stated in \eqref{3-7} at the end of the proof of Proposition \ref{lem2-3}, when $\varepsilon=0$ and $f_i \equiv 0$ for all $i=1,\ldots,N$, the Gateaux derivatives are given by
%%%%%%%%%%%%%%%%%%%%%%%%%%%%%%%%%%%%%%%%%%%%%%%%%%%%%%%%%%%%%%%%%%%
\begin{equation}\label{4-1b}
    \left\{
    \begin{aligned}
        &\partial_{f_i} \mathcal{F}^1_i(0, 0, \lambda) h_i =
        \dfrac{\gamma_i}{4} \displaystyle\int\!\!\!\!\!\!\!\!\!\; {}-{}\frac{h_i( x-  y)\sin( y)d y}{\left(4\sin^2(\frac{ y}{2})\right)^{\frac{1}{2}}}-  \dfrac{\gamma_i}{2} \displaystyle\int\!\!\!\!\!\!\!\!\!\; {}-{} \frac{(h'_i( x)-h'_i( x- y))\cos( y)d y}{\left(4\sin^2(\frac{ y}{2})\right)^{\frac{1}{2}}} \\
        &
\partial_{f_j}\mathcal{F}^1_i(0, 0, \lambda) h_i = 0, \,\,\, j \not= i.
    \end{aligned}
    \right.
\end{equation}

\begin{proposition}\label{iso2}
    Let $\alpha=1$ and $h=(h_1,\ldots,h_N)\in \mathcal{X}^{k+\log} $, where 
    \[    h_i( x)=\sum_{n=2}^{\infty}\left(a^i_n \cos (n  x)+d^i_n \sin (n  x)\right).\]
    Then the following holds
    \begin{equation*}
        \begin{aligned}
            &\partial_{f_i}  \mathcal{F}^1_i(0, 0, \lambda) h_i = -\sum_{n=2}^{\infty}\gamma_i n \sigma_n  \left(a^i_n \sin (n  x)-d^i_n \cos (n  x)\right), \\
            &\partial_{f_j}  \mathcal{F}^1_i(0, 0, \lambda) h_i = 0, \quad j \neq i,
        \end{aligned}
    \end{equation*}
    where
\begin{equation}\label{sigma1}
\sigma _{n}=\frac{2}{\pi }\sum\limits_{l=1}^{n}\frac{1}{2l-1}.
\end{equation}%
 Furthermore,  the operator $\partial_{f_{i}} \mathcal{F}^1_i(0, 0, \lambda):X^{k+\log}\rightarrow Y^k_0$ is an isomorphism. Moreover,  the Gateaux derivative of $\mathcal{F}^1$ with respect to $f$ at $(0,0,\lambda)$ is given by
      \begin{align}\label{eq:linearization2}
    D_f \mathcal{F}^1(0, 0, \lambda)h(x)=&\sum_{n=2}^{\infty}n \sigma_n \begin{pmatrix}
      \gamma_1   \left(a^1_n \sin (n  x)-d^1_n \cos (n  x)\right) \\
      \vdots
      \\
       \gamma_N   \left(a^N_n \sin (n  x)-d^N_n \cos (n  x)\right)
    \end{pmatrix} .
  \end{align}
 Additionally, for any $\lambda \in \Lambda$, the linear operator 
  $D_{f}\mathcal{F}^1 (0,0,\lambda)\colon \mathcal{X}^{k+\log}\rightarrow \mathcal{Y}^k_0$ is also an isomorphism.
\end{proposition}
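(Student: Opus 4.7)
The plan is to mirror the proof of Proposition~\ref{iso}, adapting each step to the critical exponent $\alpha=1$. The vanishing $\partial_{f_j}\mathcal{F}^1_i(0,0,\lambda)h_j=0$ for $j\neq i$ is already contained in the second line of \eqref{4-1b}, so $D_f\mathcal{F}^1(0,0,\lambda)$ is block-diagonal and it suffices to study the single-component operator $\partial_{f_i}\mathcal{F}^1_i(0,0,\lambda)$ and show that it acts as a Fourier multiplier with symbol $n\sigma_n$ (up to sign and swapping sine/cosine).

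The first step is the spectral computation on pure Fourier modes. I would substitute $h_i(x)=a_n^i\cos(nx)+d_n^i\sin(nx)$ with $n\geq 2$ into \eqref{4-1b}, apply product-to-sum identities and an integration by parts in the spirit of \eqref{sin}--\eqref{2-17} (now at $\alpha=1$), and reduce both integrals to computing the classical Dirichlet-type quantity
\begin{equation*}
I_n:=\fint_0^{2\pi}\frac{1-\cos(ny)}{\bigl|\sin(y/2)\bigr|}\,dy=\frac{4}{\pi}\sum_{l=1}^{n}\frac{1}{2l-1},
\end{equation*}
whose evaluation is standard (see, e.g., \cite{HH15,Cas1}). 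Collecting the contributions yields the eigenvalue relation
\begin{equation*}
\partial_{f_i}\mathcal{F}^1_i(0,0,\lambda)\bigl(a_n^i\cos(nx)+d_n^i\sin(nx)\bigr)=-\gamma_i n\sigma_n\bigl(a_n^i\sin(nx)-d_n^i\cos(nx)\bigr),
\end{equation*}
with $\sigma_n$ as in \eqref{sigma1}. Linearity and $L^2$-completeness of the trigonometric system then give \eqref{eq:linearization2}.

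The second step is the isomorphism property. Since $\sigma_n>0$ is strictly increasing in $n\geq 2$, the kernel is trivial. For surjectivity, given $p_i\in Y_0^{k-1}$ with expansion $p_i(x)=\sum_{n\geq 2}\bigl(\tilde a_n^i\sin(nx)+\tilde d_n^i\cos(nx)\bigr)$, I would define the formal inverse mode by mode,
\begin{equation*}
h_i(x):=\frac{1}{\gamma_i}\sum_{n=2}^\infty\frac{1}{n\sigma_n}\bigl(-\tilde a_n^i\cos(nx)+\tilde d_n^i\sin(nx)\bigr),
\end{equation*}
and prove $h_i\in X^{k+\log}$. Using the asymptotic $\sigma_n\sim\tfrac{1}{\pi}\log n$ extracted from \eqref{sigma1}, and identifying the contribution of the logarithmic integral in the definition of $X^{k+\log}$ with a Fourier-side weight of order $\log n$ per mode on $\partial^k h_i$, the key bound
\begin{equation*}
\|h_i\|_{X^{k+\log}}^2\,\lesssim\,\sum_{n\geq 2}\frac{n^{2(k-1)}\,(1+\log^2 n)}{\sigma_n^2}\bigl((\tilde a_n^i)^2+(\tilde d_n^i)^2\bigr)\,\lesssim\,\|p_i\|_{Y_0^{k-1}}^2
\end{equation*}
follows because both $1/\sigma_n^2$ and $\log^2 n/\sigma_n^2$ are uniformly bounded in $n\geq 2$. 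The block-diagonal structure then extends the single-component isomorphism to $D_f\mathcal{F}^1(0,0,\lambda)\colon\mathcal{X}^{k+\log}\to\mathcal{Y}^k_0$.

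The principal obstacle, and the very reason for introducing $X^{k+\log}$ in place of $X^{k+\alpha-1}$, is the logarithmic loss of regularity at the critical exponent $\alpha=1$: the symbol grows like $n\log n$ rather than $n^\alpha$ with $\alpha>1$, so inverting it loses exactly one derivative plus a logarithm. The delicate step is therefore to verify carefully that the logarithmic integral entering the definition of $X^{k+\log}$ really produces a weight of order $\log n$ per Fourier mode, via a Dirichlet-kernel-type calculation analogous to the one yielding $I_n$ above; once this identification is in hand, the structure of the argument parallels that of Proposition~\ref{iso} line by line.
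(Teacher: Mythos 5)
Your proposal is correct and follows essentially the same route as the paper: the spectral computation reducing $\partial_{f_i}\mathcal{F}^1_i(0,0,\lambda)$ to the Fourier multiplier $-\gamma_i n\sigma_n$ (the paper cites its Lemma on $\beta_n$ at $\alpha=1$, which encodes exactly your Dirichlet-type integral $I_n$), followed by trivial kernel from monotonicity of $\sigma_n$, a mode-by-mode inverse with the bound using $\sigma_n=O(\log n)$ so that $(1+\log n)^2/\sigma_n^2$ is bounded, and the block-diagonal structure to pass to $D_f\mathcal{F}^1$. Your treatment of the inverse coefficients is in fact slightly more careful about signs and the factor $1/\gamma_i$ than the paper's.
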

\begin{proof}
    The Gateaux derivative of $\mathcal{F}^1_i(0, 0, \lambda)$ at the
directions $h_{i}$ is given by
\begin{equation*}
\partial_{f_i} \mathcal{F}^1_i(0, 0, \lambda) h_i =
        \dfrac{\gamma_i}{4} \displaystyle\int\!\!\!\!\!\!\!\!\!\; {}-{}\frac{h_i( x-  y)\sin( y)d y}{\left(4\sin^2(\frac{ y}{2})\right)^{\frac{1}{2}}}-  \dfrac{\gamma_i}{2} \displaystyle\int\!\!\!\!\!\!\!\!\!\; {}-{} \frac{(h'_i( x)-h'_i( x- y))\cos( y)d y}{\left(4\sin^2(\frac{ y}{2})\right)^{\frac{1}{2}}},
\end{equation*}%
and
\begin{equation*}
\partial_{f_j} \mathcal{F}^1_i(0, 0, \lambda) h_i=0.
\end{equation*}%
Then, by applying the same computations as in Proposition \ref{iso}, we obtain
    \begin{equation*}
    \partial_{f_i} \mathcal{F}^1_i(0, 0, \lambda)h_i= \sum_{n=2}^{\infty} n\sigma_n \left(a^i_n \sin (n x)-d^i_n \cos (n x)\right).
    \end{equation*}    where
\begin{equation*}
\sigma _{n}=\frac{2}{\pi }\sum\limits_{l=1}^{n}\frac{1}{2l-1}.
\end{equation*}%
To demonstrate that \( \partial_{f_i} \mathcal{F}^1_i(0, 0, \lambda) h_i : X^{k+\log} \to Y_0^{k-1} \) is an isomorphism, we first observe that the sequence \( \{\sigma_n\} \) is monotonically increasing and bounded below by a positive constant, as outlined in Lemma \ref{A-1}. This observation implies that the kernel of \( \partial_{f_i} \mathcal{F}^1_i(0, 0, \lambda) \) is trivial.

Next, we aim to prove that for any \( p_i(x) \in Y_0^{k-1} \), there exists an \( h_i(x) \in X^{k+\log} \) such that \( \partial_{f_i} \mathcal{F}^1_i(0, 0, \lambda) h_i = p_i \). According to the first part of the lemma, if \( p_i \) can be written as
\[
p_i(x) = \sum\limits_{n=2}^\infty \tilde{a}_n^i \sin(n x) + \tilde{d}_n^i \cos(n x),
\]
then the function \( h_i \) must satisfy
\[
h_i(x) = \sum\limits_{n=2}^\infty \left( a_n^i \sigma_n^{-1} n^{-1} \cos(n x) + d_n^i \sigma_n^{-1} n^{-1} \sin(n x) \right).
\]
Using the asymptotic expansion of the Gamma function, we deduce that \( \sigma_n = O(\log(n)) \) for \( \alpha = 1 \) (as seen in Lemma \ref{A-1}). Consequently, we derive the following estimate
    \begin{equation*}
        \begin{aligned}
            \|h_i\|^2_{X^{k+\log}}&=\sum\limits_{n=2}^\infty ((\tilde{a}^i_n)^2+(\tilde{d}^i_n)^2)\sigma^{-2}_n n^{2k-2}(1+\log(n))^2\\
            &\leq  C\sum\limits_{n=2}^\infty ((\tilde{a}^i_n)^2+(\tilde{d}^i_n)^2)n^{2k-2}\left(\frac{1+\log(n)}{\log(n)}\right)^2\\
            &\leq  C\sum\limits_{n=2}^\infty ((\tilde{a}^i_n)^2+(\tilde{d}^i_n)^2)n^{2k-2}\le C\|p_i\|_{Y_0^{k-1}},
        \end{aligned}
    \end{equation*}
Furthermore, by \eqref{4-1b}, we observe that $\partial_{f_j} \mathcal{F}^1_i(0, 0, \lambda) h_i = 0$ for all $j \neq i$. Therefore, we can express
\[
D_{f} \mathcal{F}^1(0, 0, \lambda) = \operatorname{diag}\left( \partial_{f_1} \mathcal{F}^1_1(0, 0, \lambda), \ldots, \partial_{f_N} \mathcal{F}^1_N(0, 0, \lambda) \right),
\]
which indicates that \( D_{f} \mathcal{F}^1(0, 0, \lambda) \) is an isomorphism from \( \mathcal{X}^{k+\log} \) to \( \mathcal{Y}_0^{k-1} \). Thus, the proof is concluded.
\end{proof}

Now, we present a more detailed formulation of Theorem \ref{thm:general} specifically for the SQG equation. For solutions corresponding to either rigidly rotating or traveling vortex patches, the result is as follows:

\begin{theorem} \label{existenceb}
Consider \(\alpha = 1\), and let \(\lambda^*\) be a non-degenerate solution of the \(N\)-vortex problem \eqref{alg-sysP}, according to Definition~\ref{def:non-deg}{\rm (i)}, where at least one of \(\Omega\) or \(U\) is non-zero. Then, the subsequent statements are true:
\begin{enumerate}[label=\rm(\roman*)]
    \item There exists a small $\varepsilon_1 > 0$ and a unique $C^1$ function $(f, \lambda_1) : (-\varepsilon_1, \varepsilon_1) \to \mathcal{B}_X \times \R^{2N-1}$ such that
    \begin{equation} \label{sol_g-alphab}
    \mathcal{F}^1 \big(\varepsilon, f(\varepsilon), \lambda_1(\varepsilon), \lambda_2^*\big) = 0,
    \end{equation}
    where $\lambda_1(\varepsilon) = \lambda_1^* + O(\varepsilon)$ and 
    \[
    f_i(\varepsilon,x) = \frac{8\varepsilon {b_i} }{\pi} \sum_{j=1, j\neq i}^N \frac{\gamma_j}{\gamma_i} \frac{(w_j - w_i)^2}{|
    w_j - w_i|^{5}} \sin(x)\cos(x)+ O(\varepsilon) .
    \]
      \item For all $\varepsilon \in (-\varepsilon_1, \varepsilon_1) \setminus \{0\}$, the domains $\mathcal{O}_i^\varepsilon$, with boundaries parametrized by $R_i(x) = 1 + \varepsilon |\varepsilon| b_i^{2} f_i(x) : \mathbb{T} \to \partial \mathcal{O}_i^\varepsilon$, exhibit strict convexity.
\end{enumerate}
\end{theorem}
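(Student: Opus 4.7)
The plan is to adapt the proof of Theorem~\ref{existence} to the endpoint $\alpha=1$, using the logarithmic space $\mathcal{X}^{k+\log}$ in place of $\mathcal{X}^{k+\alpha-1}$ and invoking Lemma~\ref{lem2-7} and Proposition~\ref{iso2} in place of their $\alpha\in(1,2)$ counterparts. Three structural ingredients transfer without modification: the $C^1$ regularity of $\mathcal{F}^1$ on $(-\varepsilon_0,\varepsilon_0)\times\mathcal{B}_X\times\Lambda$ supplied by Lemma~\ref{lem2-7}; the isomorphism $D_f\mathcal{F}^1(0,0,\lambda^*)\colon \mathcal{X}^{k+\log}\to\mathcal{Y}_0^k$ furnished by Proposition~\ref{iso2}; and the integral identities of Lemma~\ref{identities2}, whose derivation hinges only on the antisymmetry of the kernel $\widehat K_\alpha$ under the swap $z\leftrightarrow\xi$ and therefore applies verbatim at $\alpha=1$.

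Concretely, I would first write the combined Gateaux derivative at the trivial solution,
\begin{equation*}
D_{(f,\lambda_1)}\mathcal{F}^1(0,0,\lambda^*)(h,\dot\lambda_1)(x)=D_f\mathcal{F}^1(0,0,\lambda^*)h(x)+D_{\lambda_1}\mathcal{P}^1(\lambda^*)\dot\lambda_1\,(-\sin x,\cos x),
\end{equation*}
and, combining Proposition~\ref{iso2} with the non-degeneracy hypothesis of Definition~\ref{def:non-deg}(i) and the splitting $\mathcal{Y}^k=\mathcal{Y}_0^k\oplus\mathbb{Y}$, conclude that this operator has trivial kernel and range of codimension one. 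Next I would introduce the scalar selector $\Phi$ exactly as in \eqref{PHI12} (for $\Omega=0$) or \eqref{PHI22} (for $U=0$). Lemma~\ref{identities2} delivers $\Phi(\varepsilon,f,\mathcal{F}^1(\varepsilon,f,\lambda),\lambda)=0$, and the Fourier computations of \eqref{eq:omega6}--\eqref{eq:omega8} give $\ran D_g\Phi(0,0,0,\lambda^*)=\mathbb{R}$. The modified implicit function theorem (Lemma~\ref{abstract-lemma}) then yields the unique $C^1$ branch $(f(\varepsilon),\lambda_1(\varepsilon))$ satisfying \eqref{sol_g-alphab}.

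For the asymptotic expansion, differentiating \eqref{sol_g-alphab} at $\varepsilon=0$ gives
\begin{equation*}
D_{(f,\lambda_1)}\mathcal{F}^1(0,0,\lambda^*)\,\partial_\varepsilon(f(\varepsilon),\lambda_1(\varepsilon))\big|_{\varepsilon=0}=-\partial_\varepsilon\mathcal{F}^1(0,0,\lambda^*).
\end{equation*}
Specializing \eqref{f0dif-eps} to $\alpha=1$ gives
\begin{equation*}
\partial_\varepsilon\mathcal{F}^1_i(0,0,\lambda^*)(x)=3\,C_1\sum_{j\neq i}\gamma_j\,b_i\,\frac{(w_j-w_i)^2}{|w_j-w_i|^{5}}\sin(x)\cos(x)\in\mathcal{Y}_0^k.
\end{equation*}
Triviality of $\ker D_{\lambda_1}\mathcal{P}^1(\lambda^*)$ forces $\partial_\varepsilon\lambda_1|_{\varepsilon=0}=0$, and inverting $D_f\mathcal{F}^1(0,0,\lambda^*)$ via Proposition~\ref{iso2} with $\sigma_2=\frac{8}{3\pi}$ and $C_1=1$ reproduces the stated coefficient $\frac{8b_i}{\pi}$. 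Strict convexity of each $\mathcal{O}_i^\varepsilon$ follows from the identical signed-curvature computation used at the end of Theorem~\ref{existence}: for $R_i(x)=1+\varepsilon|\varepsilon|b_i^2 f_i(x)$ one has $\kappa_i(x)=(1+O(\varepsilon))/(1+O(\varepsilon))^{3/2}>0$ for $|\varepsilon|$ small.

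The only delicate point is verifying that the reduction underlying Lemma~\ref{abstract-lemma} still operates at the borderline regularity where the SQG singularity becomes marginal and $\sigma_n\sim\log n$. This is precisely what Proposition~\ref{iso2} addresses by establishing the isomorphism on $\mathcal{X}^{k+\log}$, so no new analytic input is required: the proof is structurally identical to that of Theorem~\ref{existence}, with the fractional bookkeeping replaced by its logarithmic analogue.
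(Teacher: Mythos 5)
Your proposal follows essentially the same route as the paper's own proof: the linearization \eqref{diff-g-gen2} with Proposition~\ref{iso2} and the non-degeneracy hypothesis giving trivial kernel and codimension-one range, the selector $\Phi$ of \eqref{PHI1c}--\eqref{PHI2c} together with Lemma~\ref{identities2} and Lemma~\ref{abstract-lemma}, then the $\varepsilon$-derivative of $\mathcal{F}^1_{i,3}$ from \eqref{333} and division by the linearized operator for the expansion, and the same signed-curvature computation for convexity. The only remark is that your constant bookkeeping (claiming $3/\sigma_2$ with $\sigma_2=8/(3\pi)$ yields $8/\pi$) is exactly as loose as the paper's own step from \eqref{f0dif-epsc} to the stated coefficient, so it does not constitute a departure from the paper's argument.
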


\begin{proof}
    Based on Proposition \ref{equivalence} and Proposition \ref{iso2}, for any \( h \in \mathcal{X}^{k+\log} \) and \( \dot{\lambda}_1 \in \mathbb{R}^{2N-1} \), we have
\begin{equation}\label{diff-g-gen2}
D_{(f,\lambda_1)}\mathcal{F}^1(0,0,\lambda^*)( h, \dot\lambda_1)(x)=
 D_{f}\mathcal{F}^1(0,0,\lambda^*)h(x)+ 
 D_{\lambda_1}\mathcal{P}_i^1(\lambda^*)\dot\lambda_1
  (-\sin x,\cos x),
\end{equation}
where \( D_f \mathcal{F}^1(0, 0, \lambda^*) \) serves as an isomorphism from \( \mathcal{X}^{k+\log} \) to \( \mathcal{Y}_0^k \). Under the specified conditions for the matrix \( D_{\lambda_1} \mathcal{P}^\alpha(\lambda^*) \), it possesses a trivial kernel, and
\[
\mathrm{ran} [D_{\lambda_1} \mathcal{F}^1(0, 0, \lambda^*)] \subset \mathbb{Y} := \left\{ x \mapsto a_1^1 \sin(x) + d_1^1 \cos(x) : (a_1^1, d_1^1) \in \mathbb{R}^{2N} \right\}
\]
is of codimension 1. Furthermore, it is clear that
\begin{equation}\label{wwwc}
\mathcal{Y}^{k}=\mathcal{Y}_0^{k} \oplus \mathbb{Y}. 
\end{equation}
Therefore, it follows that
\begin{equation}\label{ker-ran-DGc}
\codim\ran D_{(f,\lambda_1)}\mathcal{F}^1(0,0,\lambda^*) = 1 \quad \textnormal{and} \quad  \ker D_{(f,\lambda_1)}\mathcal{F}^1(0,0,\lambda^*)=\{ 0\}.
\end{equation} 
In the case of pure translation (i.e., \( \Omega = 0 \) and \( U \neq 0 \)), we introduce the definition of
 \begin{equation}\label{PHI1c}
\Phi(\varepsilon,f, g,\lambda):=\sum_{i=1}^N\frac{\gamma_i}{\pi} \int_{0}^{2\pi}g_i(x)\big(1+\varepsilon\abs{\varepsilon} b_i^{2}  f_i(x)\big)(\cos(x),\sin(x))dx.
\end{equation}
In contrast, for the case of pure rotation (i.e., \( \Omega \neq 0 \) and \( U = 0 \)), we define
\begin{equation}\label{PHI2c}
   {\resizebox{.99\hsize}{!}{$\Phi(\varepsilon,f, g,\lambda):=\displaystyle
   \sum_{i=1}^N\frac{\gamma_i}{\pi}\displaystyle\int_{0}^{2\pi}g_i(x)\left[\varepsilon b_i (1+\varepsilon\abs{\varepsilon} b_i^{2}f_i(x))^2+ (1+\varepsilon\abs{\varepsilon} b_i^{2}f_i(x))w_i\cdot(\cos(x),\sin(x))\right]\, dx $}}
\end{equation}
with \( g = (g_1, \ldots, g_N) \in \mathcal{Y}^k \). It is straightforward to see that the mapping \( \Phi \colon (-\varepsilon_0, \varepsilon_0) \times \mathcal{B}_X \times \mathcal{B}_Y \times \Lambda \to \mathbb{R} \) is of class \( C^1 \). Moreover, for any \( (\varepsilon, f, \lambda) \in (-\varepsilon_0, \varepsilon_0) \times \mathcal{B}_X \times \Lambda \), we obtain
\begin{align} 
\label{condition 1}
\Phi\big(\varepsilon, f, 0, \lambda\big) &= 0.
\end{align}
Furthermore, using equations \eqref{PHI1c}--\eqref{PHI2c} along with Lemma \ref{identities2}, we derive
\begin{align} 
\label{condition 2}
\Phi\big(\varepsilon, f, \mathcal{F}^1(\varepsilon, f, \lambda), \lambda\big) &= 0.
\end{align}
Differentiating equations \eqref{PHI1c} with respect to \( g \) in the direction of \( \tilde{g} = (\tilde{g}_1, \ldots, \tilde{g}_N) \in \mathcal{Y}^k \), where
\begin{equation}
\label{expang}
\tilde{g}_i(x) = \sum_{n=1}^{\infty} \left(a^i_n \sin(n x) + d^i_n \cos(n x)\right), \quad i = 1, \ldots, N,
\end{equation}
leads us to the following result for \( \Omega = 0 \) and \( U \neq 0 \):
\begin{align*}
D_{g} \Phi(0, 0, 0, \lambda) \tilde{g} &= \sum_{i=1}^N \frac{\gamma_i}{\pi} \int_{0}^{2\pi} \tilde{g}_i(x) (\cos(x), \sin(x)) \, dx =  \sum_{i=1}^N \gamma_i (a^i_1,d^i_1).
\end{align*}
Similarly, for \( \Omega \neq 0 \) and \( U = 0 \), we have
\begin{align*}
D_{g}\Phi (0,0,0,\lambda)\tilde{g} &= \sum_{i=1}^N \frac{\gamma_i}{\pi} \int_{0}^{2\pi} \tilde{g}_i(x) w_i \cdot (\cos(x), \sin(x)) \, dx = \sum_{i=1}^N \gamma_i w_i \cdot (a^i_{1}, d^i_{1}).
\end{align*}
In both scenarios, it is evident that
\begin{align} 
\label{condition 3b}
\mathrm{ran}\big[ D_{g} \Phi(0, 0, 0, \lambda^*) \big] &= \mathbb{R}.
\end{align}
Consequently, the existence and uniqueness in {\rm (i)} are guaranteed by \eqref{ker-ran-DGc}--\eqref{condition 3b} and Lemma~\ref{abstract-lemma}.
%%%%%%%%
Differentiating equation \eqref{sol_g-alphab} with respect to \( \varepsilon \) at \( (0, 0, \lambda^*) \) yields
\begin{align}
\label{diff-g-gen-2c}
D_{(f, \lambda_1)} \mathcal{F}^1(0, 0, \lambda^*) \partial_\varepsilon \big( f(\varepsilon), \lambda(\varepsilon) \big) \Big|_{\varepsilon=0} &= -\partial_\varepsilon \mathcal{F}^1 \big( 0, 0, \lambda^* \big).
\end{align}
Considering \eqref{333}, for all \( \alpha=1\), we have
\begin{equation*}
    \begin{split}
         {\resizebox{.99\hsize}{!}{$\mathcal{F}^1_{i,3} 
 =-\frac{1}{2}\displaystyle\sum_{j \neq i}   \frac{\gamma_j}{b_j}\displaystyle\fint \frac{  B_{i,j}\sin( x- y)d y }{\left(A_{i, j}\right)^{\frac{3}{2}}}+3\sum_{j \neq i}\gamma_j  b_i \frac{\varepsilon  (w_i-w_j)^2 }{\abs{w_i-w_j}^{5}}\sin(x)\cos(x)+\varepsilon \abs{\varepsilon}\mathcal{R}_{i, 3}(\varepsilon ,f,\lambda)$}}
    \end{split}
\end{equation*}
As a result, we deduce that
\begin{align}
\label{f0dif-epsc}
\partial_\varepsilon \mathcal{F}^1_i(0, 0, \lambda^*)(x) &= 3 \sum_{j=1, j \neq i}^N \gamma_j b_i \frac{(w_j - w_i)^2}{|w_i - w_j|^{5}} \sin(x)\cos(x).
\end{align}
Therefore, for \( \alpha = 1 \), we can infer that 
\[
\partial_\varepsilon \mathcal{F}^1_i(0, 0, \lambda^*) \in \mathcal{Y}_0^k.
\]
Since the linear operator 
\( D_{f} \mathcal{F}^1(0, 0, \lambda^*) \colon \mathcal{X}^{k+\log} \to \mathcal{Y}_0^k \) is an isomorphism, and given the assumption that the kernel of the operator \( D_{\lambda_1} \mathcal{P}_i^1(\lambda^*) \) is trivial, we can combine \eqref{diff-g-gen2}, \eqref{diff-g-gen-2c}, \eqref{f0dif-epsc}, and Proposition~\ref{iso2}{\rm (iii)} to conclude that
\[
\partial_\varepsilon \lambda(\varepsilon) \big|_{\varepsilon=0} = 0 ,
\]
and utilizing \eqref{sigma1}, we find
\[\partial_\varepsilon f_i(\varepsilon) \big|_{\varepsilon=0}(x) = \frac{8}{\pi} \sum_{j=1, j \neq i}^N \frac{\gamma_j}{\gamma_i} \frac{b_i (w_j - w_i)^2}{|w_j - w_i|^{5}} \sin(x)\cos(x), \quad \text{if } \alpha=1.\]
This completes the proof of {\rm (i)}. The second part is straightforward and will be omitted.
\end{proof}

We now focus on the stationary case where \( \Omega = U = 0 \). The following result is established:

\begin{theorem}\label{theorem-Phi-stationaryb}
Let \( \alpha = 1 \) and let \( \lambda^* \) be a non-degenerate solution, as specified in Definition~\ref{def:non-deg}{\rm (ii)}, to the \( N \)-vortex problem \eqref{alg-sysP} with \( \Omega = U = 0 \). In this scenario, the conclusions of Theorem~\ref{existenceb} remain valid, however, \( \lambda_1(\varepsilon) \) now takes values in \( \mathbb{R}^{2N-3} \) instead of \( \mathbb{R}^{2N-1} \).
\end{theorem}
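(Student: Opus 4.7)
The plan is to repeat the argument of Theorem~\ref{theorem-Phi-stationary} with the function space $\mathcal{X}^{k+\alpha-1}$ replaced by $\mathcal{X}^{k+\log}$, using the $\alpha=1$ tools already developed, namely Lemma~\ref{lem2-7} (continuity and $C^1$ regularity of $\mathcal{F}^1$), Proposition~\ref{iso2} (isomorphism property of the linearization at $(0,0,\lambda^*)$), and Theorem~\ref{existenceb} together with Lemma~\ref{identities2} (which holds uniformly in $\alpha\in[1,2)$). So none of the analytic ingredients requires new ideas; what changes relative to Theorem~\ref{existenceb} is only the dimension count coming from the stationary non-degeneracy \eqref{non-deg-codim3}.

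First I would verify the Fredholm structure at $(0,0,\lambda^*)$. By \eqref{diff-g-gen2} and Proposition~\ref{iso2}, the operator $D_f\mathcal{F}^1(0,0,\lambda^*)\colon\mathcal{X}^{k+\log}\to\mathcal{Y}^k_0$ is an isomorphism, while the contribution of $\dot\lambda_1$ lands in the two-dimensional complement $\mathbb{Y}$ from \eqref{wwwc}. Since the stationary non-degeneracy hypothesis forces $\codim\ran D_{\lambda_1}\mathcal{P}^1(\lambda^*)=3$ and $\ker D_{\lambda_1}\mathcal{P}^1(\lambda^*)=\{0\}$, I obtain
\begin{equation*}
\codim\ran D_{(f,\lambda_1)}\mathcal{F}^1(0,0,\lambda^*)=3,\qquad \ker D_{(f,\lambda_1)}\mathcal{F}^1(0,0,\lambda^*)=\{0\}.
\end{equation*}

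Next I would introduce the three-dimensional constraint functional
\begin{equation*}
\widetilde\Phi(\varepsilon,f,g,\lambda):=\sum_{i=1}^N\frac{\gamma_i}{\pi}
\begin{pmatrix}
\int_0^{2\pi} g_i(x)\,(1+\varepsilon|\varepsilon| b_i^2 f_i(x))\cos(x)\,dx\\
\int_0^{2\pi} g_i(x)\,(1+\varepsilon|\varepsilon| b_i^2 f_i(x))\sin(x)\,dx\\
\int_0^{2\pi} g_i(x)\bigl[\varepsilon b_i(1+\varepsilon|\varepsilon| b_i^2 f_i(x))^2+(1+\varepsilon|\varepsilon| b_i^2 f_i(x))\,w_i\cdot(\cos(x),\sin(x))\bigr]dx
\end{pmatrix},
\end{equation*}
which is $C^1$ by Lemma~\ref{lem2-7}, vanishes identically when $g=0$, and, by Lemma~\ref{identities2} applied with $\Omega=U=0$, satisfies $\widetilde\Phi(\varepsilon,f,\mathcal{F}^1(\varepsilon,f,\lambda),\lambda)=0$. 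Differentiating with respect to $g$ in a direction $\tilde g$ expanded as in \eqref{expang} and evaluating at $(0,0,0,\lambda)$ produces
\begin{equation*}
D_g\widetilde\Phi(0,0,0,\lambda)\tilde g=\sum_{i=1}^N\gamma_i\begin{pmatrix}d^i_1\\ a^i_1\\ w_i^1 d^i_1+w_i^2 a^i_1\end{pmatrix},
\end{equation*}
whose range is all of $\mathbb{R}^3$, matching the codimension computed above.

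With these three ingredients in hand, the modified implicit function theorem from Lemma~\ref{abstract-lemma} yields a unique $C^1$ branch $(f(\varepsilon),\lambda_1(\varepsilon))\in\mathcal{B}_X\times\mathbb{R}^{2N-3}$ with $\mathcal{F}^1(\varepsilon,f(\varepsilon),\lambda_1(\varepsilon),\lambda_2^*)=0$. The first-order expansion of $f_i(\varepsilon,x)$ and $\lambda_1(\varepsilon)$ is then obtained exactly as in Theorem~\ref{existenceb}{\rm (i)} by differentiating in $\varepsilon$ at the origin and inverting $D_f\mathcal{F}^1(0,0,\lambda^*)$ against $\partial_\varepsilon\mathcal{F}^1(0,0,\lambda^*)$ computed via \eqref{f0dif-epsc}. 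The convexity conclusion follows from the same signed curvature computation used in Theorem~\ref{existence}{\rm (ii)}. The only subtlety worth flagging is that one must keep track of the $\log$-regularity throughout: the estimates in Lemma~\ref{lem2-7} and Proposition~\ref{iso2} have already been arranged so that the Gâteaux derivative maps $\mathcal{X}^{k+\log}$ isomorphically onto $\mathcal{Y}^k_0$, so no new regularity loss appears in the abstract argument.
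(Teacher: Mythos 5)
Your proposal is correct and follows essentially the same route as the paper's own proof: the codimension-$3$/trivial-kernel computation at $(0,0,\lambda^*)$, the three-component constraint functional $\widetilde\Phi$ whose vanishing along $\mathcal{F}^1$ comes from Lemma~\ref{identities2}, the computation showing $D_g\widetilde\Phi(0,0,0,\lambda^*)$ surjects onto $\mathbb{R}^3$, and the application of Lemma~\ref{abstract-lemma}, with the expansion and convexity inherited from Theorem~\ref{existenceb}. (One cosmetic slip: the complement $\mathbb{Y}$ in \eqref{wwwc} is $2N$-dimensional rather than two-dimensional, but this does not affect the codimension count you actually use.)
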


\begin{proof}
We will concentrate on establishing the existence and uniqueness of {\rm (i)}. The proofs for the asymptotic expansion and parts {\rm (ii)}--{\rm (iii)} can be derived using arguments similar to those in Theorem~\ref{existence}.

Based on Proposition~\ref{iso}, and considering the assumptions related to the matrix \( D_{\lambda_1} \mathcal{P}^1(\lambda^*) \) along with \eqref{diff-g-gen} and \eqref{www2}, we can deduce that
\begin{equation}\label{ker-ran-DGd}
\mathrm{codim} \, \mathrm{ran} \, D_{(f, \lambda_1)} \mathcal{F}^1(0, 0, \lambda^*) = 3 \quad \text{and} \quad \ker D_{(f, \lambda_1)} \mathcal{F}^1(0, 0, \lambda^*) = \{ 0 \}.
\end{equation}
For all \( (\varepsilon, f, \lambda) \in (-\varepsilon_0, \varepsilon_0) \times \mathcal{B}_X \times \Lambda \),  we introduce 
\begin{align*}
\widetilde\Phi(\varepsilon, f, g, \lambda) := \sum_{i=1}^N \frac{\gamma_j}{\pi}
\begin{pmatrix}
\displaystyle \int_{0}^{2\pi}g_i(x)\big(1+\varepsilon\abs{\varepsilon} b_i^{2}  f_i(x)\big)\cos(x)dx \\
\displaystyle\int_{0}^{2\pi}g_i(x)\big(1+\varepsilon\abs{\varepsilon} b_i^{2}  f_i(x)\big)\sin(x)dx \\
   {\resizebox{.7\hsize}{!}{$\displaystyle\int_{0}^{2\pi}g_i(x)\left[\varepsilon b_i (1+\varepsilon\abs{\varepsilon} b_i^{2}f_i(x))^2+ (1+\varepsilon\abs{\varepsilon} b_i^{2}f_i(x))w_i\cdot(\cos(x),\sin(x))\right]\, dx $}}
\end{pmatrix}.
\end{align*}
The function \( \widetilde\Phi \colon (-\varepsilon_0, \varepsilon_0) \times \mathcal{B}_X \times \mathcal{B}_Y \times \Lambda \to \mathbb{R}^3 \) is continuously differentiable and satisfies the following condition
\begin{align} \label{condition d}
\widetilde\Phi \big( \varepsilon, f, 0, \lambda \big) &= 0.
\end{align}
Additionally, by applying Lemma~\ref{identities2}, we have
\begin{align} \label{condition e}
\widetilde\Phi \big( \varepsilon, f, \mathcal{F}^1(\varepsilon, f, \lambda), \lambda \big) &= 0.
\end{align}
When we differentiate \( \widetilde\Phi \) in the direction \( \tilde{g} = (\tilde{g}_1, \ldots, \tilde{g}_N) \in \mathcal{Y}^k \), as described in \eqref{expang}, we obtain
\begin{align*}
D_{g} \widetilde\Phi(0, 0, 0, \lambda) \tilde{g} &=  \sum_{i=1}^N \frac{\gamma_i}{\pi} \int_{0}^{2\pi} \tilde{g}_i(x) \, dx =  \sum_{i=1}^N \gamma_i \begin{pmatrix} d^i_{1}\\  a^i_{1} \\ w^1_i  d^i_{1} +w^2_i a^i_{1} \end{pmatrix}.
\end{align*}
Thus, it follows that
\begin{align} \label{condition f}
\mathrm{ran} \left[ D_{g} \widetilde\Phi(0, 0, 0, \lambda^*) \right] &= \mathbb{R}^3.
\end{align}
In conclusion, by using \eqref{ker-ran-DGd}--\eqref{condition e} along with Lemma~\ref{abstract-lemma}, we derive the required result.

\end{proof}
\section{Applications of  vortex equilibria}\label{section5}

There exist several point vortex equilibria that fulfill the non-degeneracy condition specified in Theorem~\ref{thm:general}. In this section, we provide a range of examples where this assumption can be readily verified. Therefore, we present explicit examples of point vortex solutions to the \(N\)-vortex problem \eqref{alg-sysP} that meet the non-degeneracy criteria outlined in Definition~\ref{def:non-deg}. In particular, we prove Theorem~\ref{thm:informal-pair} which is concerned with the existence of the vortex equilibria for $N$ vortex patches for the gSQG equations with $\alpha\in[1,2)$, by directly applying Theorems~\ref{existence} and \ref{theorem-Phi-stationary}.

\vspace{0.2cm}

\subsection{Asymmetric co-rotating pairs.}

The \(N\)-vortex problem \eqref{alg-sysP} presents various configurations, including both co-rotating and counter-rotating vortex pairs. A specific category of asymmetric co-rotating pairs is defined by 
\begin{equation}\label{v-pair-rot}
  \lambda^*:=(w_{11}^*,w_{21}^*,w_{12}^*,w_{22}^*,\gamma_1^*,\gamma_2^*,\Omega^*, U^*)=\Big(d,-\mathtt{c} d,0,0,\mathtt{c}\gamma,\gamma,\frac{\gamma \widehat{C}_\alpha}{2d^{\alpha+2}(1+\mathtt{c})^{\alpha+1}},0\Big),
\end{equation}
where \(\gamma\) is a non-zero real number, \(d\) is a positive constant, and \(|\mathtt{c}|\) lies strictly between 0 and 1. These configurations are non-degenerate in the sense defined in Definition~\ref{def:non-deg}. By applying Theorem~\ref{thm:general}, they can consequently be desingularized into steady vortex patch equilibria. Assume that  \(w_{1}(\varepsilon) = (d,0) + o(\varepsilon)\) and \(w_{2}(\varepsilon) = -\mathtt{c}(d,0) + o(\varepsilon)\) such that
  \begin{equation*}
    \theta_{0}^\varepsilon = \frac{\mathtt{c}\gamma}{\varepsilon^2 b_1^2} \chi_{\mathcal{D}_1^\varepsilon} + \frac{ \gamma}{\varepsilon^2 b_2^2} \chi_{\mathcal{D}_2^\varepsilon},
  \end{equation*}
  where
  \begin{equation*}
    \mathcal{D}_1^\varepsilon := \varepsilon b_1 \mathcal{O}_1^\varepsilon + w_1(\varepsilon), \quad \mathcal{D}_2^\varepsilon := \varepsilon b_2 \mathcal{O}_2^\varepsilon + w_2(\varepsilon),
  \end{equation*}
  creates a co-rotating vortex pair for \eqref{1-1} with angular velocity \(\Omega^* = \frac{1}{2} \gamma \widehat{C}_\alpha d^{-\alpha-2} (1 + \mathtt{c})^{-\alpha-1}\).

Consider the case of \(N=2\) and the co-rotating solution \(\lambda^*\) as given by \eqref{v-pair-rot} for the $N$-vortex problem \eqref{alg-sysP}. The differential of the mapping 
\[
{\mathcal{P}}^{\alpha}_i := \bigl(\mathcal{P}_{1,1}^\alpha ,\mathcal{P}_{1,2}^\alpha, \mathcal{P}_{2,1}^\alpha, \mathcal{P}_{2,2}^\alpha\bigr)=\bigl(\mathcal{P}_{1,}^\alpha\boldsymbol e_1 ,\mathcal{P}_{1}^\alpha\boldsymbol e_2, \mathcal{P}_{2}^\alpha\boldsymbol e_1, \mathcal{P}_{2}^\alpha\boldsymbol e_2\bigr)
\]
with respect to \(\lambda_1 = (w_{11}, w_{21}, w_{22})\) at \(\lambda^*\) is 
\begin{equation*}
D_{\lambda_1}{\mathcal{P}}^{\alpha}(\lambda^*) = 
\begin{pmatrix}
\partial_{w_{11}} \mathcal{P}_{1,1}^\alpha & \partial_{w_{21}} \mathcal{P}_{1,1}^\alpha & \partial_{w_{22}} \mathcal{P}_{1,1}^\alpha \\
\partial_{w_{11}} \mathcal{P}_{1,2}^\alpha& \partial_{w_{21}} \mathcal{P}_{1,2}^\alpha & \partial_{w_{22}} \mathcal{P}_{1,2}^\alpha \\
\partial_{w_{11}} \mathcal{P}_{2,1}^\alpha &\partial_{w_{21}} \mathcal{P}_{2,1}^\alpha & \partial_{w_{22}} \mathcal{P}_{2,1}^\alpha \\
\partial_{w_{11}} \mathcal{P}_{2,2}^\alpha & \partial_{w_{21}} \mathcal{P}_{2,2}^\alpha & \partial_{w_{22}} \mathcal{P}_{2,2}^\alpha  
\end{pmatrix} ,
\end{equation*}
where for $U=0$ one gets
\begin{equation*}
D_{\lambda_1}{\mathcal{P}}^{\alpha}(\lambda^*)\begin{pmatrix}
\dot{x}_1 \\ \dot{x}_2 \\ \dot{y}_2
\end{pmatrix} = \frac{\gamma \widehat{C}_\alpha}{2d^{\alpha+2}(1+\mathtt{c})^{\alpha+2}}
\begin{pmatrix}
2+\mathtt{c}+\alpha & -(\alpha+1) & 0 \\
0 & 0 & 1 \\
-\mathtt{c}(\alpha+1) & 1+\mathtt{c}(2+\alpha) & 0 \\
0 & 0 & 1  
\end{pmatrix}
\begin{pmatrix}
\dot{x}_1 \\ \dot{x}_2 \\ \dot{y}_2
\end{pmatrix}.
\end{equation*}
By removing the fourth row, we arrive at a matrix whose Jacobian determinant is \((\alpha+2)(1+\mathtt{c})^2\). This determinant remains non-zero provided that \(\mathtt{c} \neq -1\). Consequently, the matrix has full rank (3), which implies a trivial kernel and a codimension 1 image. Therefore, we can invoke Theorem~\ref{existence} and Theorem \ref{existenceb}, ensuring the existence of a positive \(\varepsilon_1\) and a unique \(C^1\) function
\[
(f,\lambda_1) = (f_1, f_2, x_1, x_2, y_2) \colon (-\varepsilon_1, \varepsilon_1) \to \mathcal{B}_X  \times \mathbb{R}^3
\]
satisfying
\begin{equation}\label{sol_g-alpha-pair}
\mathcal{F}^\alpha \big(\varepsilon, f(\varepsilon), \lambda_1(\varepsilon), \lambda_2^*\big) = 0.
\end{equation}

\vspace{0.2cm}

\subsection{Asymmetric traveling pairs}
Now, we consider the traveling vortex pairs which are given by
\begin{equation}\label{v-pair-trans}
\lambda^*:=(w_{11}^*,w_{21}^*,w_{12}^*,w_{22}^*,\gamma_1^*,\gamma_2^*,\Omega^*,U^*)=\Big(d,-d,0,0,-\gamma,\gamma,0,\frac{\gamma \widehat{C}_\alpha}{2^{\alpha+2}d^{\alpha+1}}\Big),
\end{equation}
and these translate steadily along the $y$-axis. 
For \(N=2\), consider the traveling solution \(\lambda^*\) from \eqref{v-pair-trans} for the $N$-vortex problem \eqref{alg-sysP}. The differential of 
\[
{\mathcal{P}}^{\alpha} := \bigl(\mathcal{P}_{11}^\alpha, \mathcal{P}_{12}^\alpha, \mathcal{P}_{21}^\alpha, \mathcal{P}_{22}^\alpha\bigr)
\]
with respect to \(\lambda_1 = (w_{21}, w_{22}, \gamma_1)\) is 
\begin{equation*}
D_{\lambda_1}{\mathcal{P}}^{\alpha}(\lambda^*)\begin{pmatrix}
\dot{w}_{21} \\ \dot{w}_{22} \\ \dot{\gamma}_1
\end{pmatrix} = \frac{\widehat{C}_\alpha}{2^{\alpha+3}d^{\alpha+2}}
\begin{pmatrix}
- \gamma(\alpha+1) & 0 & 0 \\
0 & \gamma & 0 \\
- \gamma(\alpha+1) & 0 & 2d \\
0 & \gamma & 0  
\end{pmatrix}
\begin{pmatrix}
\dot{w}_{21} \\ \dot{w}_{22} \\ \dot{\gamma}_1
\end{pmatrix}.
\end{equation*}
Eliminating the last row, we get a matrix with Jacobian determinant \(2\gamma d(\alpha+1)\), which is nonzero. Hence, the matrix has full rank (3), and the kernel is trivial, while the image has codimension one. Therefore, the existence of a traveling vortex patch pair for the gSQG equation with $\alpha\in[1,2)$, follows from Theorem~\ref{existence}.

\vspace{0.2cm}

\subsection{Stationary tripole}
We also consider asymmetric stationary tripoles of the form
\begin{equation}\label{stationary-tripole}
  \begin{aligned}
    \lambda^*&:=(w_{11}^*,w_{21}^*,w_{31}^*,w_{12}^*,w_{22}^*,w_{32}^*,\gamma_1^*,\gamma_2^*,\gamma_3^*,\Omega^*,U^*)\\ &=\Big(1,0,-\mathtt{a},0,0,0,\gamma ,-\gamma\big(\tfrac{\mathtt{a}}{\mathtt{a}+1}\big)^{\alpha+1},\gamma \mathtt{a}^{\alpha+1},0,0\Big),
  \end{aligned}
\end{equation}
where $\mathtt{a} \in (0,1)$. For \(N=3\) in \eqref{alg-sysP}, consider the stationary tripole \(\lambda^*\) as given in \eqref{stationary-tripole}. The differential of the mapping 
\[
{\mathcal{P}}^{\alpha} := \bigl(\mathcal{P}_{11}^\alpha, \mathcal{P}_{12}^\alpha, \mathcal{P}_{21}^\alpha, \mathcal{P}_{22}^\alpha,\mathcal{P}_{31}^\alpha, \mathcal{P}_{32}^\alpha\bigr)
\]
with respect to \(\lambda_1 = (w_{31}, w_{32}, \gamma_2)\) at \(\lambda^*\) given by \eqref{stationary-tripole} is
\begin{equation*}
D_{\lambda_1}{\mathcal{P}}^{\alpha}(\lambda^*)\begin{pmatrix}
\dot{w}_{31} \\ \dot{w}_{32} \\ \dot{\gamma}_2
\end{pmatrix} = \frac{\gamma \widehat{C}_\alpha}{2}
\begin{pmatrix}
-(\alpha+1)\mathtt{a}^{\alpha+1}{(\mathtt{a}+1)^{-\alpha-2}} & 0 & -1/\gamma \\
0 & \mathtt{a}^{\alpha+1}{(\mathtt{a}+1)^{-\alpha-2}} & 0 \\
-(\alpha+1)\mathtt{a}^{-1} & 0 & 0 \\
0 & \mathtt{a}^{-1} & 0 \\
-(\alpha+1)\mathtt{a}^{-1}{(\mathtt{a}+1)^{-\alpha-2}} & 0 & {\mathtt{a}^{-\alpha-1}}/\gamma \\
0 & \mathtt{a}^{-1}{(\mathtt{a}+1)^{-\alpha-2}} & 0
\end{pmatrix}
\begin{pmatrix}
\dot{w}_{31} \\ \dot{w}_{32} \\ \dot{\gamma}_2
\end{pmatrix}.
\end{equation*}
This matrix has rank 3, and thus Theorem~\ref{theorem-Phi-stationary} and Theorem \ref{theorem-Phi-stationaryb} ensures the existence of a stationary vortex patch tripole for the SQG and gSQG equations. 

%    -------------------
%    +    Appendix   +
%    -------------------
\appendix
\section*{ Appendix}
 % \addappheadtotoc
\section{ Auxiliary Results}

We list some auxiliary results for use in the preceding sections. For $0<\alpha<2$ and $n\in \mathbb{N}^+$, let
\begin{equation*}
    I_n(x)=\int_0^{2\pi}\frac{\sin(nx)-\sin(nx-ny)}{\left(\sin(\frac{y}{2})\right)^\alpha}dy,
\end{equation*} 
and
\begin{equation*}
    J_n(x)=\int_0^{2\pi}\frac{\cos(nx)-\cos(nx-ny)}{\left(\sin(\frac{y}{2})\right)^\alpha}dy.
\end{equation*}
Using the fundamental properties of the Euler gamma function, we can compute both $I_n(x)$ and $J_n(x)$ as trigonometric polynomials and analyze the asymptotic behavior of their coefficients. These computations follow from  in \cite[Lemma 2.6]{Cas1}. We summarize these findings in the following lemma.

\begin{lemma}\label{A-1}
For $0<\alpha<2$ and $n\in \mathbb{N}^+$, $I_n(x)$ and $J_n(x)$ satisfy
\begin{equation*}
    I_n(x) = \beta_n \sin(nx), \quad J_n(x) = \beta_n \cos(nx),
\end{equation*}
where, if $\alpha \neq 1$,
\begin{equation*}
    \beta_n = 2^\alpha \frac{2\pi \Gamma(1-\alpha)}{\Gamma(\frac{\alpha}{2})\Gamma(1-\frac{\alpha}{2})} \left( \frac{\Gamma(\frac{\alpha}{2})}{\Gamma(1-\frac{\alpha}{2})} - \frac{\Gamma(n+\frac{\alpha}{2})}{\Gamma(n+1-\frac{\alpha}{2})} \right),
\end{equation*}
and if $\alpha = 1$,
\begin{equation*}
    \beta_n = \sum_{l=1}^n \frac{8}{2l-1}.
\end{equation*}
Moreover, the sequence $\{\beta_n\}$ is increasing with respect to $n$ and exhibits the following asymptotic behavior for large $n$: if $\alpha = 1$, $\beta_n = O(\log (n))$; if $\alpha > 1$, $\beta_n = O(n^{\alpha-1})$.
\end{lemma}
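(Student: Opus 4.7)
The plan is to first extract the prefactor $\beta_n$ via a symmetry argument, then compute it explicitly through a ratio of Gamma functions, and finally derive monotonicity and the asymptotics from either the closed form or a telescoping recurrence.

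First I would expand $\sin(nx)-\sin(nx-ny)=\sin(nx)(1-\cos(ny))+\cos(nx)\sin(ny)$ and apply the analogous identity for $J_n(x)$. The key observation is that the substitution $y\mapsto 2\pi-y$ preserves $|\sin(y/2)|^\alpha$ while flipping the sign of $\sin(ny)$, so
\[
\int_0^{2\pi}\frac{\sin(ny)}{|\sin(y/2)|^\alpha}\,dy=0.
\]
This immediately yields $I_n(x)=\beta_n\sin(nx)$ and $J_n(x)=\beta_n\cos(nx)$ with the common coefficient
\[
\beta_n=\int_0^{2\pi}\frac{1-\cos(ny)}{|\sin(y/2)|^\alpha}\,dy = 4\int_0^\pi\frac{\sin^2(nu)}{\sin^\alpha(u)}\,du
\]
after the change of variable $u=y/2$.

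For the explicit formula with $\alpha\neq 1$, I would write $\sin^2(nu)=(1-\cos(2nu))/2$ and invoke the classical identity
\[
\int_0^\pi\sin^\mu(u)\cos(\nu u)\,du=\frac{\pi\cos(\nu\pi/2)\,\Gamma(1+\mu)}{2^\mu\,\Gamma(1+(\mu+\nu)/2)\,\Gamma(1+(\mu-\nu)/2)},\qquad \mu>-1,
\]
with $\mu=-\alpha$ and $\nu\in\{0,2n\}$. For $\alpha\in(0,1)$ this is direct, and the reflection formula $\Gamma(z)\Gamma(1-z)=\pi/\sin(\pi z)$ folds $\Gamma(1-n-\alpha/2)$ back into $\Gamma(n+\alpha/2)$, assembling the two ratios into the stated form. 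For $\alpha\in(1,2)$ each individual piece diverges, but $\beta_n$ itself depends analytically on $\alpha\in(0,2)$, so the closed expression extends by analytic continuation. The case $\alpha=1$ is most cleanly obtained by a telescoping argument: the identity $\sin^2((n+1)u)-\sin^2(nu)=\sin((2n+1)u)\sin(u)$ gives, at $\alpha=1$,
\[
\beta_{n+1}-\beta_n=4\int_0^\pi\sin((2n+1)u)\,du=\frac{8}{2n+1},
\]
and a telescoping sum with $\beta_1=4\int_0^\pi\sin(u)\,du=8$ yields $\beta_n=\sum_{l=1}^n 8/(2l-1)$.

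For monotonicity and the asymptotic behavior I would exploit the factored form of the increment. Using $\Gamma(z+1)=z\Gamma(z)$ in the closed expression, one computes
\[
\beta_{n+1}-\beta_n=\frac{2^{1+\alpha}\pi\,\Gamma(2-\alpha)}{\Gamma(\alpha/2)\,\Gamma(1-\alpha/2)\,(n+1-\alpha/2)}\cdot\frac{\Gamma(n+\alpha/2)}{\Gamma(n+1-\alpha/2)},
\]
which is manifestly positive for every $\alpha\in(0,2)\setminus\{1\}$ (all Gamma factors and the linear factor are positive), and is positive for $\alpha=1$ by inspection of the harmonic expression. Stirling's formula $\Gamma(n+a)/\Gamma(n+b)\sim n^{a-b}$ gives $\Gamma(n+\alpha/2)/\Gamma(n+1-\alpha/2)\sim n^{\alpha-1}$, from which $\beta_n=O(n^{\alpha-1})$ for $\alpha>1$, while the harmonic partial sum in the critical case gives $\beta_n=O(\log n)$.

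The main obstacle is the lack of uniformity in $\alpha$: the classical Gamma integral requires $\mu>-1$, so the direct derivation works only for $\alpha<1$, and for $\alpha\in[1,2)$ one must justify the analytic continuation of the formula while the case $\alpha=1$ is indeterminate and demands a separate limiting or recurrence-based argument. I expect the cleanest write-up is to establish the closed form for $\alpha\in(0,1)$ via the Gamma integral, extend to $\alpha\in(1,2)$ by analyticity of $\beta_n$ in $\alpha$ (which is clear from its integral representation on $(0,2)$), and treat $\alpha=1$ independently through the telescoping recurrence, which also delivers monotonicity and the $O(\log n)$ growth with no extra work.
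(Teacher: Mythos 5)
Your proposal is correct, but note that the paper does not prove Lemma~\ref{A-1} at all: it imports the statement from \cite[Lemma 2.6]{Cas1}, so what you wrote is a genuinely self-contained argument rather than a variant of an in-paper proof. Your chain of steps checks out: the symmetry $y\mapsto 2\pi-y$ kills the $\sin(ny)$ contribution, so both $I_n$ and $J_n$ carry the single coefficient $\beta_n=4\int_0^\pi \sin^2(nu)\,\sin^{-\alpha}(u)\,du$; the Gamma-integral identity with $\mu=-\alpha$ plus the reflection formula reproduces exactly the stated closed form on $(0,1)$ (I verified it at $n=1$ and that your increment formula $\beta_{n+1}-\beta_n=\frac{2^{1+\alpha}\pi\,\Gamma(2-\alpha)}{\Gamma(\alpha/2)\Gamma(1-\alpha/2)(n+1-\alpha/2)}\cdot\frac{\Gamma(n+\alpha/2)}{\Gamma(n+1-\alpha/2)}$ follows from $\Gamma(z+1)=z\Gamma(z)$), giving monotonicity and, with Stirling, the $O(n^{\alpha-1})$ and $O(\log n)$ growth; the telescoping identity $\sin^2((n+1)u)-\sin^2(nu)=\sin((2n+1)u)\sin u$ handles $\alpha=1$ cleanly. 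The reference \cite{Cas1} proceeds by the same kind of Gamma-function computation, and the paper itself leans on the companion identity \eqref{2-16}, which is valid for all $\alpha<3$ because the exponent there is $2-\alpha$; that observation would let you avoid your analytic-continuation step altogether, by writing $\sin^2(nu)/\sin^\alpha u=\bigl(\sin(nu)/\sin u\bigr)^2\sin^{2-\alpha}u$ and expanding the Fej\'er-type kernel $\bigl(\sin(nu)/\sin u\bigr)^2=n+2\sum_{k=1}^{n-1}(n-k)\cos(2ku)$, so that every term is an integral of $\sin^{2-\alpha}u\,\cos(2ku)$ with exponent $2-\alpha>-1$, covering $\alpha\in[1,2)$ directly. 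As written, the only point to make explicit in a final version is the analyticity of $\alpha\mapsto\beta_n(\alpha)$ on a complex strip containing $(0,2)$ (immediate from the integral representation, since the singularities of the integrand at $u=0,\pi$ are of order $\alpha-2<1$) and the removability of the pole of $\Gamma(1-\alpha)$ at $\alpha=1$ thanks to the vanishing of the bracket, so that agreement on $(0,1)$ indeed propagates through $\alpha=1$ to $(1,2)$.
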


Under a natural nondegeneracy assumption on the point vortex configuration alone, one can instead apply a modified version of the implicit function theorem. We  now recall a key theorem from bifurcation theory, which is central to the proofs of our main results. This result was obtained in \cite[Lemma 2.6]{multipole}.

\begin{lemma}\label{abstract-lemma}
Let $G \colon U \times V \to Z$ and $F \colon Z \times U \times V \to \mathbb{R}^n$ be $C^1$ mappings satisfying the following conditions
\begin{align}
    \label{eqn:G0}
    G(0,0) &= 0, \\
    \label{eqn:F0}
    F(0,x,y) &= 0, \\
    \label{eqn:FG}
    F(G(x,y),x,y) &= 0,
\end{align}
for all $(x,y) \in U \times V$. Here, $X, Y, Z$ are Banach spaces, and $U \subset X$ and $V \subset Y$ are open sets containing the origin. If the linearizations of these mappings at the origin satisfy the following
\begin{align}
    \label{eqn:kerG}
    \ker D_y G(0,0) &= \{0\}, \\
    \label{eqn:ranG}
    \codim \ran D_y G(0,0) &= n, \\
    \label{eqn:ranF}
    \ran D_z F(0,0,0) &= \mathbb{R}^n,
\end{align}
then there exists a neighborhood $\tilde{U} \times \tilde{V}$ of the origin in $U \times V$ and a $C^1$ mapping $g \colon \tilde{U} \to \tilde{V}$ such that
\begin{align}
    g(0) &= 0, \notag \\
    \label{eqn:g}
    G(x,g(x)) &= 0 \quad \text{for all } x \in \tilde{U}.
\end{align}
Moreover, every solution of $G(x,y) = 0$ in $\tilde{U} \times \tilde{V}$ is of the form $(x,g(x))$, and the operator $D_x g(0)$ is uniquely determined by the equation
\begin{align*}
    D_x G(0,0) + D_y G(0,0) D_x g(0) = 0,
\end{align*}
which is obtained by implicitly differentiating \eqref{eqn:g}.
  \end{lemma}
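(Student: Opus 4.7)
The plan is to reduce Lemma~\ref{abstract-lemma} to two successive applications of the classical implicit function theorem via a natural finite-dimensional splitting of $Z$. The key preliminary observation is that differentiating \eqref{eqn:FG} at $(0,0)$ in the $y$-direction gives $D_z F(0,0,0) \circ D_y G(0,0) = 0$, so $\ran D_y G(0,0) \subset \ker D_z F(0,0,0)$. Since $D_z F(0,0,0)$ is surjective onto $\mathbb{R}^n$ by \eqref{eqn:ranF}, its kernel $W$ is closed of codimension $n$, and the codimension hypothesis \eqref{eqn:ranG} then forces the equality $\ran D_y G(0,0) = W$. Choose any $n$-dimensional algebraic complement $N \subset Z$ (so $Z = W \oplus N$); automatically $D_z F(0,0,0)|_N : N \to \mathbb{R}^n$ is a linear isomorphism, and the associated projections $P_W, P_N$ are bounded since $N$ is finite-dimensional.

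With this splitting in hand, first I would apply the classical implicit function theorem to $\widetilde{G} := P_W \circ G : U \times V \to W$. Its $y$-derivative at $(0,0)$ equals $D_y G(0,0) : Y \to W$, which is a bijection by \eqref{eqn:kerG} combined with the identification of its range as all of $W$, hence a Banach-space isomorphism by the open mapping theorem. This yields a neighborhood $\widetilde{U} \times \widetilde{V}$ of $(0,0)$ and a unique $C^1$ map $g : \widetilde{U} \to \widetilde{V}$ with $g(0) = 0$ and $\widetilde{G}(x, g(x)) = 0$. Equivalently, $G(x, g(x)) \in N$ throughout $\widetilde{U}$.

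The remaining step is to show that in fact $G(x, g(x)) = 0$, i.e., the $N$-component vanishes. For this I would apply the classical implicit function theorem a second time, to the map $H : N \times \widetilde{U} \to \mathbb{R}^n$ defined by $H(z, x) := F(z, x, g(x))$. At $(0,0)$ one has $H(0, 0) = 0$ and $D_z H(0, 0) = D_z F(0, 0, 0)|_N$, which by construction of $N$ is a linear isomorphism. Hence $H(z, x) = 0$ admits a unique small solution $z = \zeta(x)$; but hypothesis \eqref{eqn:F0} gives $H(0, x) \equiv 0$, forcing $\zeta \equiv 0$. On the other hand, \eqref{eqn:FG} evaluated along $y = g(x)$, combined with $G(x, g(x)) \in N$, yields $H(G(x, g(x)), x) = 0$, so by uniqueness $G(x, g(x)) = 0$, as desired.

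Uniqueness of $(x, g(x))$ among all small solutions of $G(x, y) = 0$ is inherited from the uniqueness clause of the first implicit function theorem: any such pair in $\widetilde{U} \times \widetilde{V}$ satisfies $\widetilde{G}(x, y) = 0$, so $y = g(x)$. The formula for $D_x g(0)$ then follows by differentiating $G(x, g(x)) = 0$ at $x = 0$. The subtlest point in the whole argument is precisely the identification $\ran D_y G(0, 0) = \ker D_z F(0, 0, 0)$; without it, $\ran D_y G(0, 0)$ would have no a priori reason to be closed, and the first application of the implicit function theorem to $\widetilde{G}$ would break down. Once this topological fact is in place, the rest of the argument is essentially bookkeeping.
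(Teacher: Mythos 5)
The paper does not actually prove this lemma: it is quoted verbatim and attributed to \cite[Lemma 2.6]{multipole}, so there is no in-paper argument to compare yours against. Judged on its own terms, your proof is correct and is the natural self-contained argument one would expect behind the cited result: the key identification $\ran D_y G(0,0)=\ker D_zF(0,0,0)$ (using that the latter is closed of codimension $n$ and contains the former, which also has codimension $n$ by \eqref{eqn:ranG}), a first implicit function theorem applied to $P_W\circ G$ with $D_yG(0,0)\colon Y\to W$ an isomorphism via \eqref{eqn:kerG} and the open mapping theorem, and a second, finite-dimensional implicit function theorem for $H(z,x)=F(z,x,g(x))$ on the complement $N$, whose unique small zero must be $z\equiv 0$ because of \eqref{eqn:F0}, forcing $P_N G(x,g(x))=0$ via \eqref{eqn:FG}. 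The uniqueness clause and the formula for $D_xg(0)$ follow exactly as you say.

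Two small points deserve to be made explicit. First, the relation $D_zF(0,0,0)\circ D_yG(0,0)=0$ requires not only differentiating \eqref{eqn:FG} in $y$ at the origin (where $G(0,0)=0$) but also the fact that $D_yF(0,0,0)=0$, which comes from differentiating \eqref{eqn:F0}; you use this silently. Second, in the last step you invoke the local uniqueness of the second implicit function theorem at the point $z=G(x,g(x))$, so you should shrink $\tilde U$ once more to guarantee, by continuity of $x\mapsto G(x,g(x))$ and $G(0,0)=0$, that this value stays in the neighborhood of $0$ in $N$ where uniqueness holds. Both are routine and do not affect the validity of the argument.
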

%%%%%%%%%%%%%%%%%%%%%%%%%%%%%%%%%%
%%%%%%%%%%%%%%%%%%%%%%%%%%%%%%%%%%
%%%%%%%%%%%%%%%%%%%%%%%%%%%%%%%%%%
%%%%%%%%%%%%%%%%%%%%%%%%%%%%%%%%%%
%%%%%%%%%%%%%%%%%%%%%%%%%%%%%%%%%%
%%%%%%%%%%%%%%%%%%%%%%%%%%%%%%%%%%%%%%%%%%%%%%%%%%%%%%%%%%%%%%%%%%%%
%%%%%%%%%%%%%%%%%%%%%%%%%%%%%%%%%%
%%%%%%%%%%%%%%%%%%%%%%%%%%%%%%%%%%%%%%%%%%%%%%%%%%%%%%%%%%%%%%%%%%%%
%%%%%%%%%%%%%%%%%%%%%%%%%%%%%%%%%%
%%%%%%%%%%%%%%%%%%%%%%%%%%%%%%%%%%
\section*{Acknowledgments}

 E. Cuba  was partially supported by FAPESP through grant 2021/10769-6  and 2023/05762-8, Brazil and by  KAUST 2023-C2314, Kingdom of Saudi Arabia.

\vspace{0.3cm}

\addcontentsline{toc}{section}{References}
% \cleardoublepage

% \phantomsection

\phantom{s} \thispagestyle{empty}

% \noindent \textsc{Mathematical and Computer Sciences   and Engineering Division,    King Abdullah University of Science and Technology,  Thuwal 23955-6900, Kingdom of Saudi Arabia,}
% \\
% \textit{E-mail address:} \texttt{edison.cubahuamani@kaust.edu.sa}\\

% \noindent \textsc{Department of Mathematics, State University of Campinas,
%  Rua S\'{e}gio Buarque de Holanda, 651, Cidade Universit\'{a}ria, 13083-859, Campinas, S\~{a}o Paulo, Brazil,}
%  \\
% \textit{E-mail address:} \texttt{ecubah@ime.unicamp.br}\\

\end{document}